\documentclass{amsart}

\usepackage[all,hyperref,numberbysection]{bi-discrete}
\usepackage{amsmath}
\usepackage{upgreek}
\usepackage{mathrsfs}
\usepackage{microtype}
\numberwithin{equation}{section}
\setcounter{tocdepth}{1}

\usepackage{a4wide}

\usepackage[backend=biber,maxbibnames=5,maxalphanames=5,style=alphabetic,sorting=nty,sortcites=true,bibencoding=utf8,giveninits,url=false,isbn=false]{biblatex}
\addbibresource{gp-nonlocal.bib}
\AtBeginBibliography{\small}

\begin{document}

\author{Sun-Sig Byun}
\author{Kyeongbae Kim}
\author{Kyeong Song}
\address{Sun-Sig Byun: Department of Mathematical Sciences and Research Institute of Mathematics, Seoul National University, Seoul 08826, Korea}
\email{byun@snu.ac.kr}

\address{Kyeongbae Kim: Department of Mathematical Sciences, Seoul National University, Seoul 08826, Korea}
\email{kkba6611@snu.ac.kr}

\address{Kyeong Song: School of Mathematics, Korea Institute for Advanced Study, Seoul 02455, Korea}
\email{kyeongsong@kias.re.kr}

\makeatletter
\@namedef{subjclassname@2020}{\textup{2020} Mathematics Subject Classification}
\makeatother

\subjclass[2020]{35R09, 35B65, 35J60}

\keywords{fractional $p$-Laplacian, Reifenberg flat, boundary regularity}
\thanks{S.-S. Byun was supported by Mid-Career Bridging Program through Seoul National University.  K. Kim was supported by NRF-2022R1A2C1009312. K. Song was supported by a KIAS individual grant (MG091702) at Korea Institute for Advanced Study.}

\title[Nonlinear nonlocal equations in Reifenberg flat domains]{Nonlinear nonlocal equations in Reifenberg flat domains}

\begin{abstract}
% We establish several fine boundary regularity results for solutions, and their gradients, to non-homogeneous fractional $p$-Laplace equations defined on nonsmooth domains. To the best of our knowledge, each of our results is new even in the linear case. 
We consider nonhomogeneous fractional $p$-Laplace equations defined on a bounded nonsmooth domain which goes beyond the Lipschitz category. 
Under a sufficient flatness assumption on the domain in the sense of Reifenberg, 
we establish several fine boundary regularity results for solutions, and their gradient, near the boundary. To the best of our knowledge, each of our results is new even in the linear case. 
\end{abstract}

\maketitle

\tableofcontents

%% ------------------------------------------------------------

\section{Introduction}
This paper is concerned with nonlocal problems of the type 
\begin{equation}\label{eq.diri}
\left\{
\begin{alignedat}{3}
(-\Delta_p)^s{u}&= f&&\qquad \mbox{in  $\Omega\cap B_2$}, \\
{u}&=0&&\qquad  \mbox{in $B_2\setminus \Omega$},
\end{alignedat} \right.
\end{equation}
where $\Omega \subset \mathbb{R}^n$ is a bounded domain with possibly nonsmooth boundary $\partial\Omega$ and $(-\Delta_p)^s$ is the $s$-fractional $p$-Laplace operator defined by 
\begin{equation*}
    (-\Delta_p)^su(x)\coloneqq p.v.\,\int_{\bbR^n}\frac{|u(x)-u(y)|^{p-2}(u(x)-u(y))}{|x-y|^{n+sp}}\,dy, \quad x \in \mathbb{R}^n,
\end{equation*}
with $p\in(1,\infty)$ and $s\in(0,1)$. In this paper, we are interested in boundary regularity for solutions, and their gradient, under a considerably mild regularity assumption on the domain $\Omega$.

Boundary regularity theory for partial differential equations presents nontrivial and challenging issues, particularly when the underlying domain lacks a graph structure and classical techniques such as flattening or reflection are no longer applicable. A central question in this direction is whether boundary regularity results can still be achieved in the absence of smoothness, or even Lipschitz continuity, of the domain. 
In this respect, it was shown in \cite{ByuWan04, ByuRyu} that sharp global gradient estimates for local elliptic problems remain valid under the Reifenberg flatness assumption (see Definition \ref{defn.rei}). 
The notion of Reifenberg flatness, which was introduced by Reifenberg \cite{Rei60} in his study of Plateau-type problems, is characterized by a scaling-invariant flatness that accommodates substantial boundary roughness. This geometric framework enables one to recover key analytic tools necessary for establishing boundary regularity, see \cite{Tor97} for a well-written summary. 
We also refer to \cite{KenTor99, LewNys12} and references therein for more on Reifenberg flatness in the context of harmonic measures and free boundary regularity results for local elliptic equations. 

%In turn, the results in \cite{ByuRyu,ByuWan04} demonstrate that Reifenberg flatness, while significantly weaker than $C^1$ or Lipschitz regularity, is nevertheless sufficient for the validity of global Calder\'on-Zygmund theory for both linear and nonlinear local problems. 

We now turn to regularity theory for nonlocal problems. 
Despite its nonlocal nature, the fractional $p$-Laplacian shares key variational and structural features with its local counterpart and has attracted significant attention in recent years. Interior regularity for such nonlocal problems have been systematically studied. For instance, without being exhaustive, basic regularity results for problems with measurable kernels are well established in \cite{Coz17, DicKuuPal,DicKuuPal2, Kas09, KorKuuPal16, KimLeeLee23,AusBorEgeSar19,KuuMinSir152, KuuMinSir15,NguOkSon24, Sch} and references therein.  For higher regularity results under better regularity assumptions on the data, see also \cite{BisTop24, BogDuzLiBisSer24d, BraLin17, BraLinSch18, ByuKim,  DieKimLeeNow24, DieKimLeeNow24j, DieNow23, GarLin24,Fal20,KuuSimYan22,BogDuzLiaMorcz25,BogDuzLiaBisSer24,MenSchYee21,CafSil11}.

However, on the contrary to interior ones, boundary regularity issues for nonlocal problems exhibit several peculiarities, even in the case of linear equations in sufficiently regular domains; see \cite{FerRos24} for an overview. 
For instance, while any solution $u$ to \eqref{eq.diri} with $p=2$ and $f=\mathrm{const.}$ is locally $C^{\infty}$-regular, one cannot expect more than boundary $C^s$-regularity for $u$ due to explicit examples. 
Such a phenomenon suggests that boundary regularity for nonlocal problems needs substantially different approaches compared to the case of corresponding local problems, in which the interior and boundary regularity often coincide via the reflection argument.

Specifically, in view of higher regularity near the boundary, it was observed in \cite{Gru15,RosSer14,RosSer17} that the function $u/d^s$, where $d$ denotes the distance function to $\partial\Omega$ (see \eqref{defn.dist}), enjoys better boundary regularity than $u$ itself.  
Heuristically, the limit of $u/d^s$ as $s\nearrow1$ can be considered as the normal derivative of $u$ at a boundary point. 
There are several recent results for linear nonlocal problems in this direction, concerned with various regularity assumptions on the domain and the kernel. 
We refer to \cite{Pra25, Pra25a,BorNoc23} for higher H\"older and Sobolev regularity results for the solution itself, and \cite{KimWei24,RosWei24,Gru24,AbeGru23,ByuKimKum25} for higher regularity results for $u/d^s$. 
Note in particular that in \cite{Pra25, Pra25a}, global $C^{s-\epsilon}$-regularity was proved for linear nonlocal problems in Reifenberg flat domains. 
We also refer to \cite{ChoKimRyu23, DonRyu24,AbdFerLeoYou23} for other kinds of global Calder\'on-Zygmund type estimates. 

% In the recent paper \cite{ByuKimKum25}, together with Kumar, the first and second authors of this paper proved boundary Calder\'on-Zygmund estimates of $u/d^s$ for linear nonlocal problems with discontinuous, non-translation invariant kernel coefficients in $C^{1,\alpha}$-domains. 

Up to now, there are few boundary regularity results for fractional $p$-Laplacian type problems in the literature. 
In \cite{IanMosSqu16}, global $C^{\alpha}$-regularity for some $\alpha \in (0,s]$ of weak solutions $u$ was obtained via comparison principles and barrier arguments. 
Such results, combined with the interior higher regularity results in \cite{BraLinSch18, GarLin24}, in turn lead to global $C^{s}$-regularity of $u$. 
Later in \cite{IanMos24, IanMosSqu20}, global H\"older regularity for $u/d^s$ was obtained. We remark that all the papers \cite{IanMosSqu16,IanMosSqu20,IanMos24} considered $C^{1,1}$-domains. 
We also refer to \cite{BorLiwNoc24} for boundary higher differentiability results for solutions in Lipschitz domains. 
However, to our knowledge, there is no boundary regularity result for nonlocal problems with $p$-growth in nonsmooth domains.

The aim of this paper is to develop a boundary regularity theory for nonlinear nonlocal problems of the type \eqref{eq.diri} in nonsmooth domains. Specifically, we establish several fine regularity results near the boundary, provided the domain is sufficiently flat. 
This opens new paths for studying nonlocal problems in rough domains, and the methods develop in this paper may extend to more general nonlocal operators with nonstandard growth. 

\subsection{Assumptions and main results}

We first recall the notion of $(\delta,R_0)$-Reifenberg flatness. 
We refer to the next section for more on basic properties of Reifenberg flat domains. 

\begin{definition}\label{defn.rei}
   Let $\delta>0$ and $R_0>0$ be fixed. We say that $\Omega$ is $(\delta,R_0)$-Reifenberg flat if for any $z\in\partial\Omega$ and $r\leq R_0$, there is a coordinate system $\{y_1,\ldots,y_n\}$, which may depend on $r$ and $z$, such that $z=(z',0)\in\bbR^{n-1}\times \bbR$ in this coordinate system and 
   \begin{equation*}
      B_r(z',0)\cap \{y_n>\delta r\}\subset  B_r(z)\cap \Omega\subset B_r(z',0)\cap \{y_n>-\delta r\}.
   \end{equation*}
\end{definition}

\begin{remark} \,
\begin{itemize}
\item[(i)] Note that any Lipschitz domain with a small Lipschitz constant is $(\delta,R_0)$-Reifenberg flat for some sufficiently small constants $\delta$ and $R_0$ (see \cite{Tor97}).
\item[(ii)] A notable consequence of Reifenberg flatness is the following measure density condition (see for instance \cite[(2.12) in Lemma 2.7]{ByuWan04}): if $\Omega$ is $(\delta,R_0)$-Reifenberg flat with $\delta<1/2$ and $R_0>0$, then 
\begin{align}\label{cond.meas}
    \sup_{0<\rho<R_0}\sup_{z\in\Omega}\frac{|B_\rho|}{|B_\rho(z)\cap \Omega|}\leq 4^n\quad\text{and}\quad \inf_{0<\rho<R_0}\inf_{z\in\partial\Omega}\frac{|B_\rho(z)\setminus \Omega|}{|B_\rho|}\geq 4^{-n}.
\end{align}
Therefore, we always consider a sufficiently flat domain $\Omega$ to ensure that it satisfies the measure density condition \eqref{cond.meas}. 
\end{itemize}
\end{remark}

Before we introduce a notion of weak solution to \eqref{eq.diri}, we recall function spaces; see for instance \cite{BraLin17,DicKuuPal} for more details. 
We consider the following fractional space 
    \begin{align*}
    X_0^{s,p}(\Omega)\coloneqq\{u\in W^{s,p}(\bbR^n)\,:\,u\equiv 0 \text{ on }\bbR^n\setminus \Omega\}
\end{align*}
equipped with the norm
\begin{align*}
    \|u\|_{X_0^{s,p}(\Omega)}\coloneqq \|u\|_{L^p(\Omega)}+[u]_{W^{s,p}(\mathbb{R}^n)}.
\end{align*}
We also consider the tail space
    \begin{align*}
        L^{p-1}_{sp}(\bbR^n)\coloneqq\left\{u:\bbR^n\to\bbR\,:\,\int_{\bbR^n}\frac{|u(y)|^{p-1}}{(1+|y|)^{n+sp}}\,dy<\infty\right\}.
    \end{align*}
Accordingly, for any $u \in L^{p-1}_{sp}(\bbR^n)$ and any ball $B_r(z) \subset \bbR^n$, we denote the nonlocal tail of $u$ by
\begin{align*}
     \mathrm{Tail}(u;B_r(z))\coloneqq \left(r^{sp}\int_{\bbR^n\setminus B_r(z)}\frac{|u(y)|^{p-1}}{|y-z|^{n+sp}}\,dy\right)^{1/(p-1)}.
\end{align*}

We are now ready to state a definition of weak solution to \eqref{eq.diri}.
\begin{definition}
    We say that $u\in W^{s,p}(B_R(x_0))\cap L^{p-1}_{sp}(\bbR^n)$ is a weak solution to 
    \begin{equation*}
\left\{
\begin{alignedat}{3}
(-\Delta_p)^s{u}&= f&&\qquad \mbox{in  $\Omega\cap B_R(x_0)$}, \\
{u}&=0&&\qquad  \mbox{in $B_R(x_0)\setminus \Omega$},
\end{alignedat} \right.
\end{equation*}
where $f\in L^{p_*}(\Omega\cap B_R(x_0))$ with
\begin{equation}\label{defn.pstar}
    p_*\coloneqq 
    \left\{
    \begin{aligned}
        &\frac{np}{np-n+sp} & \text{if}\;\; sp < n, \\
        &\text{any number in }(1,\infty) & \text{if}\;\; sp \ge n,
    \end{aligned}
    \right.
\end{equation}
if it satisfies 
\begin{align*}
    \iint_{\bbR^{2n}}\frac{|u(x)-u(y)|^{p-2}(u(x)-u(y))(\phi(x)-\phi(y))}{|x-y|^{n+sp}}{\,dx\,dy}=\int_{\Omega\cap B_R(x_0)}f \phi\,dx
\end{align*}
for any $\phi\in X^{s,p}_0(\Omega\cap B_R(x_0))$.
\end{definition}

We now introduce our main results. 
The first result is concerned with integrability estimates for the function $u/d^s$, where the function $d$ is defined by
\begin{align}\label{defn.dist}
    d(x)\equiv d_\Omega(x)\coloneqq \mathrm{dist}(x,\bbR^n\setminus \Omega).
\end{align}
We also consider the following functional 
\begin{align*}
    \widetilde{E}(u;B_r(z))\coloneqq\left(\dashint_{B_r(z)\cap \Omega}|u|^{p-1}\,dx\right)^{1/(p-1)}+ \mathrm{Tail}(u;B_r(z)).
\end{align*}

\begin{theorem}\label{thm.lp}
     Let $u\in W^{s,p}(B_2)\cap L^{p-1}_{sp}(\bbR^n)$ be a weak solution to \eqref{eq.diri} 
% \begin{equation*}
% \left\{
% \begin{alignedat}{3}
% (-\Delta_p)^s{u}&= f&&\qquad \mbox{in  $\Omega\cap B_2$}, \\
% {u}&=0&&\qquad  \mbox{in $B_2\setminus \Omega$},
% \end{alignedat} \right.
% \end{equation*}
with $f\in L^{p_*}(\Omega\cap B_2)$. For any $q\in[p_*,n/s)$, there is a sufficiently small $\delta=\delta(n,s,p,q)>0$ such that if $\Omega$ is $(\delta,R_0)$-Reifenberg flat with $R_0>0$, then we have 
\begin{align*}
    \|u/d^s\|_{L^{\frac{nq(p-1)}{n-sq}}(\Omega\cap B_{1})}\leq c\left(\widetilde{E}(u;B_2)+\|f\|^{1/(p-1)}_{L^{q}(\Omega\cap B_2)}\right)
\end{align*}
for some constant $c=c(n,s,p,q,R_0)$.
\end{theorem}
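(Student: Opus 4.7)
The plan is to reduce the integrability estimate to a pointwise potential bound and then apply a standard Sobolev mapping. Specifically, I aim to establish
\begin{equation*}
    \left(\frac{|u(x)|}{d(x)^s}\right)^{p-1} \leq c\,\mathbf{I}_{s}\bigl(|f|\mathbf{1}_{\Omega\cap B_2}\bigr)(x) + c\,\widetilde{E}(u;B_2)^{p-1}
\end{equation*}
for a.e.\ $x\in\Omega\cap B_1$, where $\mathbf{I}_s$ is a truncated Riesz potential of order $s$. The classical Hardy--Littlewood--Sobolev mapping $\|\mathbf{I}_s g\|_{L^{nq/(n-sq)}}\leq c\|g\|_{L^q}$, valid for $q<n/s$, then yields $(u/d^s)^{p-1}\in L^{nq/(n-sq)}$, i.e.\ $u/d^s\in L^{nq(p-1)/(n-sq)}$ with the asserted dependence on $\|f\|_{L^q}$ and $\widetilde{E}(u;B_2)$.

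To derive the pointwise bound I would argue dyadically at the boundary. Fix $x\in\Omega\cap B_1$, let $z\in\partial\Omega$ realize $d(x)=|x-z|$, and for $k\ge 0$ set $r_k=2^{-k}$ and $B_k=B_{r_k}(z)$. On each $B_k\cap\Omega$ I replace $u$ by the weak solution $v_k$ of the homogeneous problem $(-\Delta_p)^sv_k=0$ with exterior data $u$. Because $\Omega$ is $(\delta,R_0)$-Reifenberg flat with $\delta$ small, $B_k\cap\Omega$ is close to a half-ball, and a barrier/comparison argument in the spirit of the $C^{1,1}$-boundary estimates of \cite{IanMosSqu16, IanMos24}, adapted to the Reifenberg framework along the lines of \cite{ByuWan04}, should yield
\begin{equation*}
    \|v_k/d^s\|_{L^\infty(B_{r_{k+1}}\cap\Omega)}\leq c\,\widetilde{E}(u;B_{r_k}(z)).
\end{equation*}
In parallel, the correction $w_k:=u-v_k$ solves a monotone fractional equation driven by $f$ with zero exterior data on $B_k\cap\Omega$. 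Testing against a suitable admissible function and using $q\ge p_*$ so that the natural dual fractional Sobolev pairing closes gives
\begin{equation*}
    \sup_{B_{r_{k+1}}\cap\Omega}(|w_k|/d^s)^{p-1}\leq c\,r_k^{sp-n/q}\|f\|_{L^q(B_{r_k}\cap\Omega)}.
\end{equation*}

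Combining the two via $|u|\le |v_k|+|w_k|$ and summing over $k=0,1,\dots,K_*$ with $r_{K_*}\asymp d(x)$ yields the target pointwise estimate: the $v_k$-contributions collapse into the initial excess $\widetilde{E}(u;B_2)$ thanks to the monotonicity of dyadic averages and the tail term built into $\widetilde{E}$, while the $w_k$-contributions assemble into the truncated dyadic Riesz potential $\sum_k r_k^{sp-n/q}\|f\|_{L^q(B_{r_k})}\lesssim \mathbf{I}_s(|f|)(x)$ after a H\"older inequality in $k$. Invoking the Sobolev mapping described in the opening paragraph finishes the argument.

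I expect the main obstacle to be the boundary half-space comparison for $v_k/d^s$: in the absence of a graph structure on $\partial\Omega$, establishing quantitative boundedness of $v_k/d^s$ up to a Reifenberg-flat boundary requires a careful nonlocal barrier, because the tail of $v_k$ interacts with its local H\"older profile at the boundary. This is exactly where one uses both the smallness of $\delta$ (to approximate the ambient domain by a half-space, where the boundary behaviour of $v/d^s$ is understood) and the Reifenberg measure-density property \eqref{cond.meas} (to run De Giorgi/Moser iterations at the rough boundary). Once this quantitative boundary estimate is in hand, the remaining dyadic summation and Sobolev mapping steps are essentially standard.
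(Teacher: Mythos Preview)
Your strategy rests on a claimed $L^\infty$ bound for the correction $w_k:=u-v_k$, namely
\[
    \sup_{B_{r_{k+1}}\cap\Omega}\bigl(|w_k|/d^s\bigr)^{p-1}\leq c\,r_k^{sp-n/q}\|f\|_{L^q(B_{r_k}\cap\Omega)},
\]
and this is where the argument breaks. Testing $w_k$ against itself yields only an energy bound of the type $[w_k]_{W^{s,p}}^{p-1}\le c\|f\|_{L^{p_*}}$; it does not produce a supremum estimate. More fundamentally, for $q$ in the lower part of the allowed range $[p_*,n/s)$---specifically $q<n/(sp)$, which is nonempty since $p_*<n/(sp)$ whenever $sp<n$---the solution $w_k$ of $(-\Delta_p)^s w_k=f$ with zero exterior data and $f\in L^q$ is in general \emph{unbounded}, so $w_k/d^s\in L^\infty$ is simply false. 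Even when $w_k$ happens to be bounded, the assertion $w_k/d^s\in L^\infty$ is a boundary $C^s$-statement that requires $f\in L^\infty$ and a barrier construction, not a test-function argument. Without pointwise control on $w_k/d^s$, the pointwise potential inequality you aim for cannot be assembled, and the reduction to a Hardy--Littlewood--Sobolev mapping collapses.

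The paper circumvents this obstruction by working with \emph{averaged} rather than pointwise comparison estimates: it proves only that the $L^{p-1}$-average of $(u-w)/d^s$ over $\Omega\cap B_{4r}$ is small (Lemma~\ref{lem.comp0}), and that the regular comparison function satisfies $\|v/d^s\|_{L^\infty}\le c\lambda$ (Lemma~\ref{lem.comprei1}, obtained via a compactness argument rather than a direct barrier in the Reifenberg domain). These averaged estimates are then fed into a Calder\'on--Zygmund iteration on level sets of the maximal function $M^\Omega(|u/d_\Omega^s|^{p-1})$, in the spirit of Caffarelli--Peral (Lemma~\ref{lem.measep} and Corollary~\ref{cor.measep}). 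The point is that only integral closeness of $u$ to a regular function is needed at each scale, which is robust enough to survive both the nonlinearity and the Reifenberg roughness, whereas your pointwise scheme demands precisely the $L^\infty$ information that is unavailable for general $q<n/s$.
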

\begin{remark} \,
\begin{itemize}
\item[(i)] When $p=2$ and $\Omega$ is $C^{1,\alpha}$ for any $\alpha>0$, the same estimate is proved in \cite[Theorem 1.4]{ByuKimKum25} (see also \cite[Remark 1.6]{ByuKimKum25} for similar results in this direction). Moreover, the sharpness of Theorem~\ref{thm.lp} is shown in \cite[Appendix A]{ByuKimKum25}. 
\item[(ii)] Theorem \ref{thm.lp} extends the previous result in \cite{ByuKimKum25} to degenerate/singular nonlocal problems in nonsmooth domains. To our knowledge, such a result is new even for Lipschitz domains.
\end{itemize}
\end{remark}

We next present a boundary higher Sobolev regularity result. % on nonsmooth domains.
\begin{theorem}\label{thm.hig}
        Let $u\in W^{s,p}(B_2)\cap L^{p-1}_{sp}(\bbR^n)$ be a weak solution to \eqref{eq.diri}, 
%     \begin{equation}\label{eq.thmdiff}
% \left\{
% \begin{alignedat}{3}
% (-\Delta_p)^s{u}&= f&&\qquad \mbox{in  $\Omega\cap B_2$}, \\
% {u}&=0&&\qquad  \mbox{in $B_2\setminus \Omega$},
% \end{alignedat} \right.
% \end{equation}
where $f\in L^q(\Omega\cap B_2)$ with $q>n/s$.  Let us fix $\gamma\in[1,\infty)$ with 
\begin{equation*}
    s+\frac1\gamma<\min\left\{1,\left(sp-\frac{n}{q}\right)\frac1{p-1}\right\}.
\end{equation*} For any $\sigma<s+1/\gamma$, there is a sufficiently small $\delta=\delta(n,s,p,q,\gamma,\sigma)>0$ such that if $\Omega$ is $(\delta,R_0)$-Reifenberg flat with $R_0>0$, then we have
\begin{equation*}
    \|u\|_{W^{\sigma,\gamma}(B_{1})}\leq c\left(\widetilde{E}(u;B_2)+\|f\|^{1/(p-1)}_{L^{q}(\Omega\cap B_2)}\right)
\end{equation*}
for some constant $c=c(n,s,p,R_0,q,\gamma,\sigma)$.
\end{theorem}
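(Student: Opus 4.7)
My plan is to deduce the $W^{\sigma,\gamma}$-estimate from (a) a boundary decay $|u(x)|\le c\,d(x)^s$ and (b) an interior H\"older estimate of order close to $\min\{1,(sp-n/q)/(p-1)\}$. Once both are in hand, the Gagliardo seminorm is controlled by a distance-to-boundary split that mimics the elementary verification that the model function $d^s$ lies in $W^{\sigma,\gamma}$ precisely for $\sigma < s+1/\gamma$.

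For (a), the hypothesis $q>n/s$ together with the inclusion $L^q\hookrightarrow L^{q'}$ on the bounded set $\Omega\cap B_2$ allow me to apply Theorem~\ref{thm.lp} with any $q'<n/s$. Letting $q'\nearrow n/s$ makes the exponent $nq'(p-1)/(n-sq')$ arbitrarily large, so $u/d^s\in L^R(\Omega\cap B_{3/2})$ for any $R<\infty$, with norm controlled by $\widetilde E(u;B_2)+\|f\|^{1/(p-1)}_{L^q(\Omega\cap B_2)}$. A further Moser iteration on weighted super-level sets of $u/d^s$, built on the weighted Caccioppoli estimates underlying Theorem~\ref{thm.lp} together with the measure density \eqref{cond.meas}, could promote this to $L^\infty$, but a large finite $R$ already suffices in what follows. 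For (b), I invoke the interior higher H\"older regularity of \cite{BraLinSch18, DieKimLeeNow24, GarLin24}: for every $\alpha<\min\{1,(sp-n/q)/(p-1)\}$, every $x\in\Omega\cap B_{5/4}$ and every $r\le d(x)/4$,
\begin{equation*}
\osc_{B_r(x)}u \le c\bigl(r/d(x)\bigr)^{\alpha}\Bigl(\|u\|_{L^\infty(B_{d(x)/2}(x))}+d(x)^{sp/(p-1)}\|f\|^{1/(p-1)}_{L^q(B_{d(x)/2}(x))}+\mathrm{Tail}(u;B_{d(x)/2}(x))\Bigr).
\end{equation*}
Step (a) lets me dominate each term on the right by $c\,d(x)^s$ (using H\"older's inequality against the $L^R$-bound of $u/d^s$ to handle the $L^\infty$ and tail contributions), yielding
\begin{equation*}
|u(x)-u(y)|\le c\,d(x)^{s-\alpha}|x-y|^{\alpha} \qquad\text{whenever } |x-y|\le d(x)/4.
\end{equation*}

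Finally I fix $\alpha\in(s+1/\gamma,\min\{1,(sp-n/q)/(p-1)\})$, admissible by assumption, and split $[u]^\gamma_{W^{\sigma,\gamma}(B_1)}$ over $B_1\times B_1$ into the near-diagonal set $\{|x-y|\le d(x)/4\}$ and its complement. The H\"older bound above reduces the near-diagonal contribution to a constant times $\int_{B_1\cap\Omega}d(x)^{(s-\sigma)\gamma}\,dx$, which is finite because $\sigma < s+1/\gamma$ and \eqref{cond.meas} ensures local integrability of $d^{(s-\sigma)\gamma}$ near $\partial\Omega$. On the complement, the bound $|u|\le c\,d^s$ together with $|x-y|\ge d(x)/4$ leads to the same integrability threshold after integrating in $y$ first. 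The principal obstacle is step (a), in particular the uniform control of the nonlocal tail on the balls $B_{d(x)/2}(x)$ as $x$ approaches $\partial\Omega$; this is where Reifenberg flatness is essential, through the measure density \eqref{cond.meas} and the boundary-tail splitting already used in the proof of Theorem~\ref{thm.lp}.
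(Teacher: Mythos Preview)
Your overall strategy---interior H\"older regularity plus boundary decay, reduced to integrability of negative powers of $d$---matches the paper's, but there are two genuine gaps.

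First, and most seriously, the claim that the measure density condition \eqref{cond.meas} ensures $\int_{B_1\cap\Omega} d^{(s-\sigma)\gamma}\,dx<\infty$ is false. Since $\sigma$ may be arbitrarily close to $s+1/\gamma$, you need $\int d^{-\epsilon}<\infty$ for every $\epsilon<1$, and measure density alone does not give this: a domain bounded by a Koch-type curve satisfies \eqref{cond.meas} on both sides, yet $|\{d<r\}|\sim r^{\alpha_0}$ with $\alpha_0<1$ fixed by the Hausdorff dimension of $\partial\Omega$. The paper instead proves (Lemma~\ref{lem.reidist} and Lemma~\ref{lem.distest}) that $(\delta,R_0)$-Reifenberg flatness with $\delta$ small \emph{depending on $\epsilon$} forces $|\{d<r\}\cap B_1|\le cr^\alpha$ for $\alpha$ arbitrarily close to $1$; this decay is the key geometric input and is not a consequence of \eqref{cond.meas}. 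Without it, your near-diagonal integral diverges.

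Second, the pointwise bound $|u(x)|\le c\,d(x)^s$ is neither established nor used in the paper, and your Moser-iteration sketch is not a proof. The paper avoids it entirely: on the interior region (your $|x-y|\le d(x)/4$, the paper's set $\mathcal{A}_2$) they bound the $L^p$-average and the tail on $B_{d(x)/2}(x)$ by $c\,d(x)^{s_0}$ for a chosen $s_0<s$, using only the finite-exponent bound $u/d^s\in L^{\bar q}$ from Theorem~\ref{thm.lp} together with a dyadic tail-splitting over annuli $B_{2^j d(x)}(x)$ (the estimates of $J_1$ and $J_2$). The resulting factor $d(x)^{-(\sigma_1-s_0)\gamma}$ has exponent strictly below $1$ and is then handled by Lemma~\ref{lem.distest}. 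Your ``large finite $R$ suffices'' remark points the right way, but the tail control you need does not follow from a single H\"older inequality. Finally, note that the paper works with finite differences $\sup_h\int|\delta_h u|^\gamma/|h|^{\sigma_1\gamma}$ and then embeds into $W^{\sigma,\gamma}$; your direct attack on the Gagliardo double integral has an asymmetric far-diagonal region $\{|x-y|>d(x)/4\}$, and the $|u(y)|^\gamma$ contribution there does not reduce to the same distance integral without symmetrizing the cutoff.
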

\begin{remark}\,
\begin{itemize}
\item[(i)]
    The result in Theorem \ref{thm.hig} is sharp in the sense that we cannot take $\sigma = s+1/\gamma$. 
    Indeed, we can see that the function $v(x)=(x_n)_+^{s}$ is a weak solution to \eqref{eq.diri} with $\Omega=\bbR^n_+$ and $f=0$, which satisfies $v\in W^{s+1/\gamma-\epsilon,\gamma}(B_{1/2})$ 
    for any $\epsilon>0$, but $v \notin W^{s+1/\gamma,\gamma}(B_{1/2})$. 

\item[(ii)]
    We would like to point out that in \cite{BorLiwNoc24,BorNoc23}, Besov-type regularity estimates for \eqref{eq.diri}, such as $u\in B^{s+1/\max\{p,2\}}_{p,\infty}(\Omega)$, are obtained when $\Omega$ is Lipschitz and $f$ belongs to a suitable Besov space. Theorem \ref{thm.hig} improves such results by obtaining the differentiability order arbitrarily close to $s+1/\gamma$, which is bigger than $s+1/p$ provided $s<(p-1)/p$, when $\Omega$ is Reifenberg flat and $f$ is sufficiently regular. 

\item[(iii)]
    We also note that Theorem \ref{thm.hig} does not follow from the results in \cite{BorLiwNoc24,BorNoc23}, as a $(\delta,R_0)$-Reifenberg flat domain need not be Lipschitz, even if $\delta$ is sufficiently small (see \cite{Tor97}).
\end{itemize}
\end{remark}
In particular, we also have a sharp boundary H\"older regularity result. %on nonsmooth domains.
\begin{theorem}\label{cor}
     Let $u\in W^{s,p}(B_2)\cap L^{p-1}_{sp}(\bbR^n)$ be a weak solution to \eqref{eq.diri}, 
%     \begin{equation*}
% \left\{
% \begin{alignedat}{3}
% (-\Delta_p)^s{u}&= f&&\qquad \mbox{in  $\Omega\cap B_2$}, \\
% {u}&=0&&\qquad  \mbox{in $B_2\setminus \Omega$},
% \end{alignedat} \right.
% \end{equation*}
where $f\in L^{n/s}(\Omega\cap B_2)$. For any $\alpha\in(0,s)$, there is a sufficiently small $\delta=\delta(n,s,p,\alpha)>0$ such that if $\Omega$ is $(\delta,R_0)$-Reifenberg flat with $R_0>0$, then we have $u\in C^\alpha(B_{1})$ with the estimate
\begin{align*}
    \|u\|_{C^\alpha(B_{1})}\leq c\left(\widetilde{E}(u;B_2)+\|f\|^{1/(p-1)}_{L^{n/s}(\Omega \cap B_2)}\right),
\end{align*}where $c=c(n,s,p,\alpha)$.
\end{theorem}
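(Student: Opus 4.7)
The plan is to deduce Theorem \ref{cor} from Theorem \ref{thm.hig} combined with the Morrey-type Sobolev embedding $W^{\sigma,\gamma}(B_1)\hookrightarrow C^{\sigma-n/\gamma}(B_1)$, valid whenever $\sigma\gamma>n$, after bridging the borderline integrability $f\in L^{n/s}$ by a truncation argument.

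Given $\alpha\in(0,s)$, I would first choose $\gamma\geq 1$ sufficiently large so that $s+1/\gamma<1$ and $(s-\alpha)\gamma>n-1$, and select $\sigma\in(\alpha+n/\gamma,\,s+1/\gamma)$. For these parameters the Morrey embedding gives $W^{\sigma,\gamma}\hookrightarrow C^\alpha$, and since $\gamma$ is large one can also fix an auxiliary exponent $q>n/s$ with $s+1/\gamma<(sp-n/q)/(p-1)$, which makes Theorem \ref{thm.hig} applicable to any solution with data in $L^q$. Because we only know $f\in L^{n/s}$, I would write $f=f_M+g_M$ with $f_M=f\chi_{\{|f|\leq M\}}\in L^\infty\subset L^q$ and $\|g_M\|_{L^{n/s}}\to 0$ as $M\to\infty$. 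For the solution $v$ of \eqref{eq.diri} with right-hand side $f_M$, Theorem \ref{thm.hig} followed by the Morrey embedding yields $v\in C^\alpha(B_1)$. The deviation $u-v$ solves a fractional equation driven by $g_M$, so a standard nonlocal Moser iteration (using \eqref{cond.meas} and in the spirit of \cite{BraLinSch18, DicKuuPal}) provides an $L^\infty$-bound of the form $\|u-v\|_{L^\infty(B_1)}\leq C(\widetilde{E}(u;B_2)+\|g_M\|_{L^{n/s}(\Omega\cap B_2)}^{1/(p-1)})$. A Campanato-type absorption on dyadic scales then balances the two contributions and yields the claimed estimate in terms of $\widetilde{E}(u;B_2)+\|f\|_{L^{n/s}}^{1/(p-1)}$.

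The main obstacle will be the nonlinearity of $(-\Delta_p)^s$: for $p\neq 2$, the equations satisfied by $v$ and $u-v$ do not decouple, so the splitting must be implemented through monotonicity and comparison arguments rather than by subtraction of solutions. The criticality of $L^{n/s}$ is precisely what forces $\alpha<s$, since the rescaled $L^{n/s}$-norm of $f$ is scale-invariant and only becomes small via absolute continuity of the integral, producing the $\varepsilon$-loss in the H\"older exponent. If this truncation route proves technically unwieldy, an equivalent path is a direct oscillation-decay argument at boundary points, comparing $u$ inside $B_r(z)$ for $z\in\partial\Omega$ with the solution of the homogeneous problem in a half-space close to $\Omega\cap B_r(z)$ in the Reifenberg sense, and exploiting the $C^s$-boundary regularity of such half-space solutions as in \cite{IanMosSqu16}.
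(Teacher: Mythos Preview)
Your route has a real gap, and it lies precisely where you anticipate it. For $p\neq2$ the decomposition $f=f_M+g_M$ does not transfer to a decomposition of solutions: if $v$ solves the problem with data $f_M$ (and exterior data $u$), then $u-v$ satisfies no useful equation, and monotonicity or comparison yields at best one-sided bounds, not the two-sided estimate $\|u-v\|_{L^\infty}\lesssim\|g_M\|_{L^{n/s}}^{1/(p-1)}$ that you write down. Even for $p=2$ the exponent $n/s$ is the \emph{borderline} for $L^\infty$: De Giorgi--Moser iteration with right-hand side in $L^{n/s}$ does not in general give an $L^\infty$ bound on $u-v$, so the inequality as stated is false. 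And in any case an $L^\infty$-smallness of $u-v$ on $B_1$ would not by itself produce $C^\alpha$-regularity of $u$; the ``Campanato-type absorption on dyadic scales'' would have to be a full scale-by-scale comparison argument that you have not set up. The alternative you mention at the end (direct oscillation decay via half-space comparison) is viable in spirit but is only a pointer, not an argument.

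The paper's proof is much shorter and avoids any splitting of $f$. The observation is that Theorem~\ref{thm.hig} requires $q>n/s$ only because it targets a differentiability order $\sigma>s$; if one is content with $\sigma_1<s$, the same difference-quotient argument runs with $q=n/s$. Concretely, take $\sigma_1=(\alpha+s)/2$ and $\gamma=4n/(s-\alpha)$ and repeat the estimates for the pieces $I_1,\dots,I_5$ in the proof of Theorem~\ref{thm.hig}. Since now $\sigma_1<s$, the appeals to Lemma~\ref{lem.basic1} become unnecessary: $|u|^\gamma/d^{\sigma_1\gamma}\leq c\,|u|^\gamma/d^{s\gamma}$, and Theorem~\ref{thm.lp} (applied with any exponent just below $n/s$, via $L^{n/s}\subset L^{q'}$) already gives $u/d^s\in L^\gamma$; likewise the interior higher-H\"older input from \cite{BraLinSch18,GarLin24} only needs $\sigma_1<(sp-n/q)/(p-1)$, which at $q=n/s$ reads exactly $\sigma_1<s$. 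One obtains the uniform bound on $\sup_{h}\int_{B_{1/8}}|u(x+h)-u(x)|^\gamma/|h|^{\sigma_1\gamma}\,dx$, hence $u\in W^{(5\alpha+3s)/8,\gamma}(B_{1/16})$, and since $(5\alpha+3s)/8-n/\gamma=(7\alpha+s)/8>\alpha$, the Morrey embedding delivers $u\in C^\alpha$. No truncation of the data, no comparison iteration; the borderline integrability of $f$ is accommodated simply by lowering the target differentiability below $s$.
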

\begin{remark} \,
\begin{itemize}
    \item[(i)] When $p=2$ and $f\in L^\infty$, the same kind of results are proved in \cite{Pra25,Pra25a}. We extend those results by not only weakening the regularity assumption on the right-hand side, but also considering the fractional $p$-Laplace operator which can be degenerate or singular. 
    \item[(ii)] On the other hand, in \cite[Theorem 3.11]{LiaZhaDonHon20}, boundary $C^{\beta}$-regularity for some $\beta\in(0,2s-n/q]$ of solutions is proved when $p=2$ and $f\in L^{q}$ with $q\in(n/(2s),n/s)$.
    \end{itemize}
\end{remark}
In the linear case, we further present a boundary Calder\'on-Zygmund type estimate. %on nonsmooth domains.
\begin{theorem}\label{thm.cz}
   Let us fix $q\in[2_*,n/(2s))$. Let $u\in W^{s,2}(B_1)\cap L^{1}_{2s}(\bbR^n)$ be a weak solution to 
    \begin{equation*}
\left\{
\begin{alignedat}{3}
(-\Delta)^s{u}&= f&&\qquad \mbox{in  $\Omega\cap B_2$}, \\
{u}&=0&&\qquad  \mbox{in $B_2\setminus \Omega$},
\end{alignedat} \right.
\end{equation*}
where $f\in L^q(\Omega\cap B_2)$. Then for any $\gamma<nq/(n-2sq)$ and $\sigma\in(s,\min\{s+1/\gamma,1\})$ with $\sigma-n/\gamma<2s-n/q$, there is a sufficiently small $\delta=\delta(n,s,q,\gamma,\sigma)>0$ such that if  $\Omega$ is $(\delta,R_0)$-Reifenberg flat with  $R_0>0$, then we have
\begin{align*}
    \|u\|_{W^{\sigma,\gamma}(B_{1})}\leq c\left(\widetilde{E}(u;B_2)+\|f\|_{L^q(\Omega \cap B_2)}\right)
\end{align*}
for some constant $c=c(n,s,q,\gamma,\sigma)$.
\end{theorem}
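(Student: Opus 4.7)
My plan exploits the linearity of $(-\Delta)^s$ via a superposition $u = V + w$, estimating the two pieces with different tools. The potential $V$ will absorb the right-hand side through the Riesz potential of order $2s$, while $w$ is the $s$-harmonic remainder, to which the boundary theory already built up in the paper can be applied.

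First I would extend $g := f \mathbf{1}_{\Omega \cap B_2}$ by zero to $\mathbb{R}^n$ and set $V := I_{2s} \ast g$. Classical fractional Calder\'on--Zygmund theory, together with Hardy--Littlewood--Sobolev, yields $V \in W^{2s,q}(\mathbb{R}^n)$ with $\|V\|_{W^{2s,q}(\mathbb{R}^n)} \le c\|f\|_{L^q}$. The Sobolev embedding $W^{2s,q} \hookrightarrow W^{\sigma,\gamma}$ then delivers $V \in W^{\sigma,\gamma}(B_1)$ with the desired bound, since the hypotheses $\sigma - n/\gamma < 2s - n/q$ and $\gamma < nq/(n-2sq)$ are exactly what this embedding requires. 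Setting $w := u - V$, we have $(-\Delta)^s w = 0$ in $\Omega \cap B_2$ with exterior data $w = -V$ on $B_2 \setminus \Omega$.

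For the $s$-harmonic remainder $w$, I would invoke Theorem \ref{thm.lp} on $u$ itself, legitimate because $p=2$ and $q \in [2_*, n/(2s)) \subset [2_*, n/s)$, to secure $u/d^s \in L^{nq/(n-sq)}(\Omega \cap B_{3/2})$. Coupled with a fractional Hardy-type inequality applied to $V$, this gives a quantitative bound on $w/d^s$ near $\partial\Omega$. I would then estimate $[w]_{W^{\sigma,\gamma}(B_1)}$ by splitting the integration into interior--interior pairs, handled by a Whitney decomposition $\{B_{r_i}(x_i)\}$ with $r_i \sim d(x_i)$ and the linear interior Calder\'on--Zygmund estimate for $(-\Delta)^s$ on each Whitney ball, and boundary pairs, where the pointwise relation $|u(x)| \le d(x)^s (u/d^s)(x)$ combined with Reifenberg flatness lets one trade $d(x)^s$ for $|x-y|^s$ whenever the companion point $y$ lies in $B_2 \setminus \Omega$. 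The nonlocal tail contributions are absorbed through the Whitney summation using $\widetilde{E}(u;B_2)$.

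The main obstacle is the non-vanishing exterior data of $w$: since $-V$ does not vanish on $B_2 \setminus \Omega$, $w$ sits outside the Dirichlet class for which Theorem \ref{thm.lp} was formulated, and one has to carefully transfer the global Sobolev regularity of $V$ into boundary regularity of $w$ inside a Reifenberg flat domain, presumably by comparison with $s$-harmonic functions in a half-space after a cut-off. A second delicate point is, in the Whitney summation, matching the $L^{nq/(n-sq)}$ boundary integrability coming from Theorem \ref{thm.lp} against the target $L^\gamma$ with $\gamma < nq/(n-2sq)$; the extra integrability gap is supplied precisely by the interior $W^{2s,q}$ gain encoded in the linear Calder\'on--Zygmund inequality, and arranging this interlock at the threshold $\sigma - n/\gamma = 2s - n/q - \epsilon$ is what forces the smallness of $\delta$.
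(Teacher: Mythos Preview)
Your global decomposition $u=V+w$ via the Riesz potential runs into a real obstruction, not just a technical nuisance: the remainder $w$ has exterior data $-V\not\equiv 0$ on $B_2\setminus\Omega$, and every boundary result proved in the paper (Theorem~\ref{thm.lp}, Lemma~\ref{lem.basic1}, Theorem~\ref{thm.hig}, Lemma~\ref{lem.hig}) requires the zero Dirichlet condition. You recognise this, but the patches you suggest do not close the gap. A fractional Hardy inequality cannot control $V/d^s$ because $V$ does not vanish on $\partial\Omega$, so you never obtain a usable bound on $w/d^s$. In the Whitney step you switch back to pointwise information about $u$ rather than $w$, which is circular: controlling $[u]_{W^{\sigma,\gamma}}$ directly via $u/d^s\in L^{nq/(n-sq)}$ from Theorem~\ref{thm.lp} falls short of the target integrability $\gamma<nq/(n-2sq)$, and the ``extra $W^{2s,q}$ gain'' you invoke lives on $V$, not on the function whose Whitney sums you are computing. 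In short, the Riesz potential does not respect the geometry of $\Omega$, and subtracting it off trades a right-hand side for bad exterior data, which for nonlocal problems in rough domains is no gain.

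The paper avoids this by never splitting globally. It works with $u$ directly through a difference-quotient bootstrap: for each $h$ it covers $B_\rho$ by balls of radius $|h|^\beta$ and on each ball compares $u$ with the \emph{local} $s$-harmonic replacement $v_i$ that agrees with $u$ outside $B_{8|h|^\beta}(z_i)\cap\Omega$. This preserves the zero exterior data, so Lemma~\ref{lem.hig} (i.e.\ Theorem~\ref{thm.hig} with $f=0$) applies to $v_i$, while the difference $u-v_i$ is estimated in $L^{nq/(n-2sq)}$ by Lemma~\ref{lem.comp.high}, which rests on the Green function of a \emph{ball}, not of $\Omega$. Summing and iterating over a finite sequence $\alpha_0=0<\alpha_1<\cdots<\alpha_{i_0}>\sigma$ raises the differentiability from $L^\gamma$ to $W^{\sigma,\gamma}$. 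The Reifenberg parameter $\delta$ enters only through Lemma~\ref{lem.hig}, and the integrability gap you identified is bridged by the local comparison at each step of the bootstrap, not by a single interior Calder\'on--Zygmund estimate.
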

\begin{remark} \,
\begin{itemize}
\item[(i)] 
    To our knowledge, similar results available in the literature are only known for $C^{1,\alpha}$-domains, whose proof relies on sharp two-sided Green function estimates (see \cite[Theorem 1.4]{AbdFerLeoYou23} and \cite[Remark 1.4]{KimWei24}). On the other hand, our approach can cover nonsmooth domains, for which sharp Green function estimates are not known.
\item[(ii)]
    Indeed, the proof of Theorem \ref{thm.cz} employs comparison estimates in $L^\gamma$ space for sufficiently large $\gamma>2$ (see Remark \ref{rmk.comp.high}), whose validity for nonlinear equations is not clear. This is why we confine ourselves to linear equations in Theorem \ref{thm.cz}, and our approach is not directly applied to fractional $p$-Laplace equations with $p\neq 2$. 
\end{itemize}
\end{remark}

The last result is concerned with sharp gradient integrability near the boundary.
%almost Lipschitz regularity near the boundary. 
We believe that the following theorem will be very useful for the investigation of Calder\'on-Zygmund type gradient estimates on nonsmooth domains.
\begin{theorem}\label{thm.lip}
     Let $u\in W^{s,p}(B_2)\cap L^{p-1}_{sp}(\bbR^n)$ be a weak solution to \eqref{eq.diri}, 
%     \begin{equation}\label{eq.thmdiff2}
% \left\{
% \begin{alignedat}{3}
% (-\Delta_p)^s{u}&= f&&\qquad \mbox{in  $\Omega\cap B_2$}, \\
% {u}&=0&&\qquad  \mbox{in $B_2\setminus \Omega$},
% \end{alignedat} \right.
% \end{equation}
where $s>(p-1)/p$ and $f\in L^\infty(\Omega\cap B_2)$. For any $q<1/(1-s)$, there is a sufficiently small $\delta=\delta(n,s,p,q)>0$ such that if $\Omega$ is $(\delta,R_0)$-Reifenberg flat with $R_0>0$, then we have 
\begin{align*}
    \|\nabla u\|_{L^q( B_{1})}\leq c\left(\widetilde{E}(u;B_2)+\|f\|^{1/(p-1)}_{L^\infty(\Omega\cap B_2)}\right)
\end{align*}
for some constant $c=c(n,s,p,R_0,q)$.
\end{theorem}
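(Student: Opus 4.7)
The plan is to derive an a.e.\ pointwise bound of the form $|\nabla u(x)|\lesssim d(x)^{\alpha-1}$ for $\alpha$ slightly smaller than $s$, by combining the boundary H\"older estimate of Theorem~\ref{cor} with an interior Lipschitz estimate for the fractional $p$-Laplacian, and then to integrate it against the distance function using the flatness of $\partial\Omega$. Given $q<1/(1-s)$, first fix $\alpha\in(0,s)$ with $\alpha>1-1/q$; such an $\alpha$ exists precisely because of the assumption on $q$. Since $f\in L^\infty\subset L^{n/s}$, Theorem~\ref{cor} yields $u\in C^\alpha(B_1)$ with norm bounded by
\[
    M:=\widetilde{E}(u;B_2)+\|f\|^{1/(p-1)}_{L^\infty(\Omega\cap B_2)}.
\]
Because $u\equiv 0$ on $\bbR^n\setminus\Omega$, continuity gives the pointwise decay $|u(y)|\le cM\,d(y)^\alpha$ for every $y\in B_1$.

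Since $s>(p-1)/p$, the interior Lipschitz regularity for $(-\Delta_p)^s$ (cf.~\cite{BraLinSch18,GarLin24}), scaled to the ball $B_{r/2}(x)\subset\Omega$ with $r=d(x)$, gives
\[
    |\nabla u(x)|\le c\,r^{-1}\|u\|_{L^\infty(B_{r/2}(x))}+c\,r^{-1}\mathrm{Tail}(u;B_{r/2}(x))+c\,r^{(sp-p+1)/(p-1)}\|f\|^{1/(p-1)}_{L^\infty(\Omega\cap B_2)}.
\]
The decay from the previous step controls the first term by $cM\,r^{\alpha-1}$. For the tail, split into $B_2\setminus B_{r/2}(x)$ and $\bbR^n\setminus B_2$: on the inner annulus $d(y)\le 2|y-x|$, so $|u(y)|^{p-1}\le cM^{p-1}|y-x|^{\alpha(p-1)}$, and since $\alpha<sp/(p-1)$ the integral concentrates near $|y-x|\sim r$ and yields the bound $r^{\alpha(p-1)-sp}$; on the outer part $|y-x|^{-n-sp}\lesssim|y|^{-n-sp}$ and the contribution is controlled by $\mathrm{Tail}(u;B_2)^{p-1}$. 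After the $r^{sp}$ prefactor and the $(p-1)$-th root this gives $\mathrm{Tail}(u;B_{r/2}(x))\le cM r^\alpha$. The same inequality $\alpha<sp/(p-1)$ ensures $(sp-p+1)/(p-1)>\alpha-1$, so with $r\le 1$ the final term is also dominated by $Mr^{\alpha-1}$, producing
\[
    |\nabla u(x)|\le cM\,d(x)^{\alpha-1}\quad\text{for a.e.\ }x\in\Omega\cap B_1.
\]

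Since $u\equiv 0$ on $B_1\setminus\Omega$ and $|\partial\Omega|=0$, the zero extension has weak gradient on $B_1$ equal to $\nabla u\cdot\mathbf{1}_\Omega$, so raising the pointwise bound to the $q$-th power and integrating yields
\[
    \int_{B_1}|\nabla u|^q\,dx\le cM^q\int_{\Omega\cap B_1}d(x)^{(\alpha-1)q}\,dx.
\]
A layer-cake decomposition together with the Ahlfors-type tube estimate $|\{x\in B_1:d(x)<t\}|\lesssim t$ for $t\in(0,1)$---which follows from the measure density \eqref{cond.meas} and a Vitali covering of $\partial\Omega\cap B_1$---reduces the right-hand side to a constant multiple of $\int_0^1 t^{(\alpha-1)q}\,dt$, finite precisely when $(\alpha-1)q>-1$, which holds by our choice $\alpha>1-1/q$. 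The main obstacle is to secure the interior Lipschitz estimate in the exact scale-invariant form used above, so that all three contributions in the pointwise bound collapse to $d(x)^{\alpha-1}$; once this is available, the Reifenberg flatness enters only softly through the tube bound on $\{d<t\}$, and the freedom to push $\alpha$ arbitrarily close to $s$ delivers the sharp threshold $q<1/(1-s)$.
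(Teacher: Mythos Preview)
Your overall strategy---obtain a pointwise bound $|\nabla u(x)|\lesssim M\,d(x)^{\alpha-1}$ from boundary $C^\alpha$ regularity plus interior Lipschitz, then integrate against $d^{(\alpha-1)q}$---is sound and more direct than the paper's proof, which works with difference quotients $\int_{B_{1/8}}|\delta_h u|^\gamma/|h|^\gamma$ split over the five regions $\mathcal{A}_1,\dots,\mathcal{A}_5$ and concludes via the standard difference-quotient characterization of $W^{1,q}$. Your route bypasses that machinery and makes the role of the H\"older exponent transparent. Two corrections are needed, however.

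First, the interior Lipschitz estimate for $(-\Delta_p)^s$ under the hypothesis $s>(p-1)/p$ is not contained in \cite{BraLinSch18,GarLin24} (those give $C^{\sigma}$ for $\sigma<1$); it is the recent result \cite[Theorem~2.1]{BisTop24}, which is exactly what the paper invokes in \eqref{ineq0.lip}.

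Second---and this is the real gap---the linear tube bound $|\{x\in B_1:d(x)<t\}|\lesssim t$ does \emph{not} follow from the measure density \eqref{cond.meas} together with a Vitali covering, and in fact can fail for Reifenberg flat domains: as the paper stresses in the Techniques section (see also \cite[Remark~2.2]{LewNysVog13}), one may have $\mathcal{H}^{n-1}(\partial\Omega)=\infty$ for arbitrarily small $\delta$, which rules out the linear Minkowski-content estimate. What the paper actually proves is Lemma~\ref{lem.distest}: for every $\kappa<1$ there is $\delta=\delta(n,\kappa)$ such that $(\delta,1)$-Reifenberg flatness yields $|\{d<t\}\cap B_1|\le c\,t^\kappa$. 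This is precisely where Reifenberg flatness enters nontrivially (through the iteration in Lemma~\ref{lem.reidist}), not ``only softly''. Fortunately your argument survives: since $(1-\alpha)q<1$, choose $\kappa\in((1-\alpha)q,1)$ and the layer-cake integral
\[
\int_0^1 t^{-(1-\alpha)q-1}\,|\{d<t\}\cap B_1|\,dt\;\lesssim\;\int_0^1 t^{\kappa-(1-\alpha)q-1}\,dt
\]
still converges. So the conclusion stands once the tube estimate is replaced by Lemma~\ref{lem.distest}; the dependence of $\delta$ on $q$ then enters through the choice of $\kappa$ as well as through $\alpha$.
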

% \begin{remark}
%     We believe that our result will be helpful to get Calder\'on-Zygmund type gradient estimates on nonsmooth domains.
% \end{remark}

\subsection{Techniques}
Here we briefly sketch the idea of proof of our main results. Regarding the proof of Theorem \ref{thm.lp}, we only explain the case when $f=0$ to skip some technical parts. 
To this end, we first find a regular solution $v$ which is close to $u$ in the sense that the $L^{p-1}$-norm of $(u-v)/d^s$ is sufficiently small, when the domain is sufficiently flat. As in the local case \cite{ByuRyu, ByuWan04}, this procedure is based on a compactness argument. More precisely, we assume that there is a sequence of weak solutions $\{u_i
\}\subset W^{s,p}(B_1)\cap L^{p-1}_{sp}(\bbR^n)$ to 
 \begin{equation*}
\left\{
\begin{alignedat}{3}
(-\Delta_p)^s{u}_i&= 0&&\qquad \mbox{in  $\Omega_i\cap B_1$}, \\
{u}_i &=0&&\qquad  \mbox{in $B_1\setminus \Omega_i$},
\end{alignedat} \right.
\end{equation*}
where $\Omega_i$ is $(1/i,1)$-Reifenberg flat, such that $u_i$ is not locally close to any regular function $v$. A difficulty here is that we do not know any global Sobolev regularity of $u_i$, which allows us to use a compactness argument in the whole domain. Thus, by using a localization argument, we consider a weak solution $w_i\in W^{s,p}(\bbR^n)$ to 
\begin{equation*}
\left\{
\begin{alignedat}{3}
(-\Delta_p)^s{w}_i&= g_i&&\qquad \mbox{in  $\Omega_i\cap B_{1/2}$}, \\
{w_i}&=0&&\qquad  \mbox{in $B_{1/2}\setminus \Omega_i$},
\end{alignedat} \right.
\end{equation*}
where $g_i\in L^\infty(\Omega_i\cap B_{1/2})$, $w_i=u_i$ in $B_{1/2}$ and $w_i\equiv 0$ in $\bbR^n\setminus B_1$. Now, due to the fact that $w_i\in W^{s,p}(\bbR^n)$, we find a limiting function $w_\infty \in W^{s,p}(\bbR^n)$ weakly solving
\begin{equation*}
\left\{
\begin{alignedat}{3}
(-\Delta_p)^s{w}_\infty&= g_\infty&&\qquad \mbox{in  $ B^+_{1/2}$}, \\
{w}_\infty&=0&&\qquad  \mbox{in $B_{1/2}^-$}
\end{alignedat} \right.
\end{equation*}
with $g_\infty\in L^\infty(B^+_{1/2})$, which enjoys $C^s$-regularity up to the boundary. 
In addition, we obtain that the $L^{p-1}$ norm of $(u_i-w_\infty)/d^s$ is small when $i$ is sufficiently large. This gives a contradiction and thus we are always able to find a regular solution which is close to $u$ when the domain is $(\delta,R_0)$-Reifenberg flat for sufficiently small $\delta>0$. 
We highlight that we do not use any Liouville type theorem which is frequently used to obtain comparison estimates beyond the solution level for linear nonlocal equations. With this comparison estimate and perturbation arguments based on \cite{CafPer98}, we obtain the desired higher integrability result for the function $u/d^s$.

Next, using the estimate given in Theorem \ref{thm.lp} and a interior higher H\"older regularity for the fractional $p$-Laplace equations, we prove boundary higher Sobolev and H\"older regularity in nonsmooth domains. In particular, we investigate the integral
\begin{equation*}
    I\coloneqq\sup_{0<|h|\ll1}\int_{B_{1/8}}\frac{|u(x+h)-u(x)|^\gamma}{|h|^{\sigma \gamma}}\,dx,
\end{equation*}
where $\gamma\in[1,\infty)$ and $\sigma<s+1/\gamma$, depending on whether the points $x$ and $x+h$ are close to the boundary or not. When both points are near the boundary, we want to obtain an estimate of the form
\begin{align*}
    I\leq c\int_{B_{1/4}}\frac{|u(x)|^\gamma}{d^{\sigma \gamma}(x)}\,dx,
\end{align*}
whose right-hand side is readily seen to be finite when $\sigma \leq s$; however, this is not clear when $\sigma>s$. 
To ensure the finiteness of the integral on the right-hand side, we prove that the distance function $d$ satisfies $d^{-\epsilon}\in L^1$ for any $\epsilon\in(0,1)$. Note that when $\Omega$ is either a Lipschitz domain or a half-space, this is a direct consequence, as $\mathcal{H}^{n-1}(\partial\Omega)<\infty$ and the distance function behaves like $d_\Omega(x)\eqsim x_n$. However, these two properties are not true when $\Omega$ is $(\delta,R_0)$-Reifenberg flat even though $\delta$ is sufficiently small (see \cite[Remark 2.2]{LewNysVog13}). Thus, we instead find a suitable covering of level sets of the distance function to derive a good decay estimate of the size of level sets in terms of the flatness of the domain (see Lemma \ref{lem.reidist}), which leads to $d^{-\epsilon}\in L^1$ for any $\epsilon\in(0,1)$ (see Lemma \ref{lem.basic1}). Using this, we obtain $u/d^\sigma\in L^\gamma$ for any $\sigma<s+1/\gamma$, which enables us to control the term $I$ in the boundary case. On the other hand, when at least one of the points is far from the boundary, we use the interior higher H\"older estimates for solutions, established in \cite{BraLinSch18,GarLin24} (see the proof of Theorem \ref{thm.hig} below for more details). Consequently, in any case, we derive $I<\infty$ and then apply an embedding lemma to get the desired higher Sobolev and H\"older estimates near the boundary. 

Subsequently, using Theorem \ref{thm.hig} and a refined difference quotient technique together with a comparison estimate in $L^\gamma$ space for sufficiently large $\gamma$ (see Lemma \ref{lem.comp.high} and  and Remark \ref{rmk.comp.high}), we can prove Calder\'on-Zygmund type estimates below the gradient level for linear nonlocal equations on nonsmooth domains. Note that this kind of difference quotient technique was first introduced in \cite{KriMin05, Miin07} in the context of local problems.  

Lastly, we prove sharp boundary gradient estimates by analyzing the term $I$ with $\sigma=1$. More precisely, we employ the same argument as in the proof of Theorem \ref{thm.hig} along with the interior Lipschitz estimates established in the recent paper \cite{BisTop24}.

\section{Preliminaries}
\subsection{Notation}
Throughout this paper, we denote by $c$ an universal constant which is bigger than or equal to 1. Moreover, we use parentheses to indicate the relevant dependence of a constant; for instance, $c=c(n,s,p)$ means that $c$ depends only on $n,s,$ and $p$.

We denote by $B_{r}(z) \coloneqq \{x \in \bbR^n : |x-z| < r\}$ the $n$-dimensional ball with center $z\in \bbR^n$ and radius $r>0$. In particular, we write $B_{r} \equiv B_{r}(0)$. 
Moreover, we write $x\coloneqq (x',x_n)\in\bbR^{n-1}\times \bbR$ and
\begin{equation*}
    B^\pm_r(z)\coloneqq \{x\in B_r(z)\,:\,\pm (x_n-z_n)>0\}.
\end{equation*}

We also recall the definition of the distance function $d_\Omega(x)\coloneqq \mathrm{dist}(x,\bbR^n\setminus \Omega)$ given in \eqref{defn.dist}. When the domain is clear from context, we omit the subscript denoting the domain $\Omega$, and simply write $d(x)\equiv d_\Omega(x)$.

%We now provide several basic properties of function spaces, Reifenberg flat domains and regularity results fractional $p$-Laplace equations, and give a localization argument.
% First, we recall an embedding result between fractional Sobolev spaces.
% \begin{lemma}\label{lem.emb}
%     Let $f\in W^{s,p}(B_r(z))$. For any $\sigma\in(0,s)$, choose $q = n/(\sigma-s+n/p)$ so that $\sigma-n/q= s-n/p$. Then 
%     \begin{equation*}
%         [f]_{W^{\sigma,q}(B_r(z))}\leq c[f]_{W^{s,p}(B_r(z))}
%     \end{equation*}
%     holds for some $c=c(n,s,p,\sigma)$.
% \end{lemma}
% \begin{proof}
% By the standard scaling argument, it suffices to show the case when $r=1$ and $z=0$.
%     Note from \cite[Proposition 2.5]{Now23} that
%     \begin{align*}
%         \|f-(f)_{B_1}\|_{W^{\sigma,q}(B_1)}\leq  c\|f-(f)_{B_1}\|_{W^{s,p}(B_1)}
%     \end{align*}
%     for some constant $c=c(n,s,p,\sigma)$. Now using the fractional Poincar\'e inequality, we get
%     \begin{equation*}
%         [f]_{W^{\sigma,q}(B_1)}\leq  \|f-(f)_{B_1}\|_{W^{\sigma,q}(B_1)}\leq c[f]_{W^{s,p}(B_1)},
%     \end{equation*}
%     where $c=c(n,s,p,\sigma)$. This completes the proof.
% \end{proof}

\subsection{Basic properties of Reifenberg flat domains}
We observe the following scaling property of a localized equation. 
\begin{lemma}\label{lem.scale}
    Let $u$ be a weak solution to 
    \begin{equation*}%\label{eq.scale}
\left\{
\begin{alignedat}{3}
(-\Delta_p)^s{u}&= f&&\qquad \mbox{in  $\Omega \cap B_{r}(x_0)$}, \\
{u}&=0&&\qquad  \mbox{in $B_{r}(x_0)\setminus \Omega$},
\end{alignedat} \right.
\end{equation*}
and define 
\begin{equation*}
    u_{r,x_0}(x)\coloneq \frac{u(rx+x_0)}{ r^s},\quad f_{r,x_0}(x) \coloneqq {r^sf(rx+x_0)}{} .
\end{equation*}
Then for any $\lambda>0$, $u_{r,x_0}/\lambda$ is a weak solution to
\begin{equation*}
\left\{
\begin{alignedat}{3}
(-\Delta_p)^s({u}_{r,x_0}/\lambda)&= f_{r,x_0}/\lambda^{p-1}&&\qquad \mbox{in  $\Omega_{r,x_0} \cap B_{1}$}, \\
{u}_{r,x_0}/\lambda&=0&&\qquad  \mbox{in $B_{1}\setminus \Omega_{r,x_0}$},
\end{alignedat} \right.
\end{equation*}
where 
\begin{equation*}
    \Omega_{r,x_0} \coloneqq (\Omega-x_0)/r\coloneqq\{(x-x_0)/r\,:\,x\in \Omega\}.
\end{equation*}
In addition, if $\Omega$ is $(\delta,R_0)$-Reifenberg flat, then $\Omega_{r,x_0}$ is $(\delta,R_0/r)$-Reifenberg flat.
\end{lemma}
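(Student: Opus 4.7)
The lemma asserts two essentially independent facts: (i) a scaling identity for the weak formulation of the fractional $p$-Laplacian, and (ii) stability of Reifenberg flatness under translation and dilation. Both are bookkeeping, so I will organize the proof to isolate the one nontrivial calculation, namely the scaling exponent that forces the factor $r^s$ in the definition of $u_{r,x_0}$.

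\emph{Step 1: Scaling identity for the weak form.} I would start from the weak formulation of the original equation on $\Omega\cap B_r(x_0)$, pick an arbitrary test function $\psi\in X^{s,p}_0(\Omega_{r,x_0}\cap B_1)$, and insert $\phi(y)\coloneqq \psi((y-x_0)/r)$ as a test function there, noting that $\phi\in X^{s,p}_0(\Omega\cap B_r(x_0))$ by construction. The change of variables $y=rx+x_0$, $\tilde y=r\tilde x+x_0$ in the double integral produces a Jacobian $r^{2n}$, and the kernel $|y-\tilde y|^{-(n+sp)}$ contributes $r^{-(n+sp)}$. Writing $u(y)-u(\tilde y)=r^s[u_{r,x_0}(x)-u_{r,x_0}(\tilde x)]$ yields an extra factor $r^{s(p-1)}$ from the $(p-1)$ powers. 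Collecting exponents gives an overall factor $r^{2n-(n+sp)+s(p-1)}=r^{n-s}$. On the right-hand side the change of variables gives $r^n$ and the definition of $f_{r,x_0}$ absorbs the remaining $r^{-s}$, precisely matching. Dividing by $\lambda^{p-1}$ throughout gives the claim for $u_{r,x_0}/\lambda$, since each $(p-1)$-power factor of $u_{r,x_0}$ is rescaled homogeneously.

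\emph{Step 2: Verifying domain and boundary conditions.} I would check that $u_{r,x_0}/\lambda\in W^{s,p}(B_1)\cap L^{p-1}_{sp}(\mathbb R^n)$ by a direct change of variables in the Gagliardo seminorm and the tail integral; the scaling in Step~1 already suggests the correct exponents. The exterior condition $u_{r,x_0}/\lambda=0$ on $B_1\setminus\Omega_{r,x_0}$ is immediate from $u=0$ on $B_r(x_0)\setminus\Omega$ and the definition of $\Omega_{r,x_0}$.

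\emph{Step 3: Reifenberg flatness transfers.} Fix $\tilde z\in\partial\Omega_{r,x_0}$ and $\rho\leq R_0/r$. Then $z\coloneqq r\tilde z+x_0\in\partial\Omega$ and $r\rho\leq R_0$, so Definition~\ref{defn.rei} applied at $z$ with radius $r\rho$ produces a coordinate system $\{y_1,\dots,y_n\}$ with
\begin{equation*}
B_{r\rho}(z',0)\cap\{y_n>\delta r\rho\}\subset B_{r\rho}(z)\cap\Omega\subset B_{r\rho}(z',0)\cap\{y_n>-\delta r\rho\}.
\end{equation*}
Subtracting $x_0$ and dividing by $r$, which is an affine isometry composed with a homothety of ratio $1/r$, carries these inclusions to the corresponding inclusions at scale $\rho$ at $\tilde z$, in the induced coordinate system; the factor $\delta$ is preserved because both the ball radius and the half-space threshold scale by the same $r$. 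This yields $(\delta,R_0/r)$-Reifenberg flatness of $\Omega_{r,x_0}$.

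The only genuinely nontrivial point is Step~1, and even there the potential pitfall is purely arithmetic, namely tracking the exponent $n-s$ arising from the combined scaling of the kernel, the Jacobian, and the $(p-1)$-power of $u$. Everything else is a direct consequence of the definitions.
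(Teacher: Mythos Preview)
Your proof is correct. The paper states this lemma without proof, treating it as a routine observation; your Step~1 change-of-variables computation, Step~2 verification of the function spaces and boundary condition, and Step~3 transfer of Reifenberg flatness are exactly the standard details the paper leaves implicit.
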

The next lemma is concerned with the scaling property of the distance function. %$d_\Omega(x)\coloneq \mathrm{dist}(x,\bbR^n\setminus \Omega)$.
\begin{lemma}\label{lem.rei}
    Let $\Psi:\bbR^n\to\bbR^n$ be an orthogonal transformation. For $x_0\in \bbR^n$ and $r>0$, define $\Psi_{r,x_0}:\bbR^n \to\bbR^n$ by
    \begin{equation*}
        \Psi_{r,x_0}(x)\coloneqq\Psi((x-x_0)/r), \quad x \in \bbR^n.
    \end{equation*}
 Then we have
    \begin{equation*}
        \mathrm{dist}(x,\bbR^n\setminus \Omega)= r\mathrm{dist}(\Psi_{r,x_0}(x),\bbR^n\setminus \Psi_{r,x_0}(\Omega))\quad\text{for any }x\in\bbR^n.
    \end{equation*}
    In particular, we have
    \begin{align*}
        d_\Omega(x)=rd_{\Omega_{r,x_0}}((x-x_0)/r).
    \end{align*}
\end{lemma}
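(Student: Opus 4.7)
My plan is to establish both identities by a short direct computation that exploits the fact that $\Psi_{r,x_0}$ is a bijection of $\bbR^n$ which scales the Euclidean metric by exactly a factor of $1/r$.

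First I would record the key metric identity: for any $x,y\in\bbR^n$,
$$|\Psi_{r,x_0}(x)-\Psi_{r,x_0}(y)|=\left|\Psi\!\left(\tfrac{x-x_0}{r}\right)-\Psi\!\left(\tfrac{y-x_0}{r}\right)\right|=\frac{|x-y|}{r},$$
where the last equality uses that $\Psi$ is an isometry. Next I would note that since $\Psi_{r,x_0}$ is a bijection, set-theoretic complements commute with its image, so that
$$\bbR^n\setminus \Psi_{r,x_0}(\Omega) = \Psi_{r,x_0}(\bbR^n\setminus \Omega).$$
Combining these two observations, for any $x\in\bbR^n$ one has
$$\mathrm{dist}(\Psi_{r,x_0}(x),\bbR^n\setminus \Psi_{r,x_0}(\Omega))=\inf_{y\in \bbR^n\setminus \Omega}|\Psi_{r,x_0}(x)-\Psi_{r,x_0}(y)|=\frac{\mathrm{dist}(x,\bbR^n\setminus \Omega)}{r},$$
and multiplying by $r$ yields the first claim. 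The ``in particular'' statement then follows immediately by specializing to $\Psi=\mathrm{Id}$, for which $\Psi_{r,x_0}(x)=(x-x_0)/r$ and $\Psi_{r,x_0}(\Omega)=\Omega_{r,x_0}$, matching the notation from Lemma \ref{lem.scale}.

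There is essentially no analytic obstacle in this lemma: it is a housekeeping statement whose role is to justify the scaling arguments, typically used in tandem with Lemma \ref{lem.scale}, that reduce local regularity questions at a boundary point $x_0\in\partial\Omega$ to the unit ball (where the rotation $\Psi$ accommodates the $r$- and $x_0$-dependent coordinate frame in the Reifenberg flatness condition). The only mild subtlety to verify carefully is that taking set-theoretic complements commutes with the bijection $\Psi_{r,x_0}$, which is what allows us to rewrite $\bbR^n\setminus \Psi_{r,x_0}(\Omega)$ as $\Psi_{r,x_0}(\bbR^n\setminus \Omega)$ when identifying the indexing set in the infimum defining the distance.
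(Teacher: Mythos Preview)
Your proof is correct and follows essentially the same approach as the paper: both arguments rely on the two facts that $\Psi_{r,x_0}$ is a bijection (so complements commute with the image) and scales Euclidean distance by $1/r$. The paper's version is slightly more explicit, choosing a nearest point $\overline{x}\in\bbR^n\setminus\Omega$ and verifying directly that its image realizes the infimum on the transformed side, whereas you compute the infimum in one line; the content is the same.
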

\begin{proof}
Note that $x\in \bbR^n\setminus \Omega$ if and only if $\Psi_{r,x_0}(x)\in \bbR^n\setminus \Psi_{r,x_0}(\Omega)$. Thus, it suffices to prove that if $x\in \Omega$, then 
\begin{equation}\label{ineq1.dist}
        \mathrm{dist}(x,\bbR^n\setminus \Omega)= r\mathrm{dist}(\Psi_{r,x_0}(x),\bbR^n\setminus \Psi_{r,x_0}(\Omega)).
    \end{equation}
    Let $\overline{x}\in\bbR^n\setminus \Omega$ satisfy $|x-\overline{x}|=\mathrm{dist}(x,\bbR^n\setminus \Omega)$. Then we have $\Psi_{r,x_0}(\overline{x})\in \bbR^n\setminus \Psi_{r,x_0}(\Omega)$ with 
    \begin{equation*}
        |\Psi_{r,x_0}(x)-\Psi_{r,x_0}(\overline{x})|=|x-\overline{x}|/r=|x-\overline{x}|/r,
    \end{equation*}
    where we have used the fact that $\Psi$ is orthogonal. Moreover, for any $\overline{x_1}=\Psi_{r,x_0}(x_1)\in \bbR^n\setminus \Psi_{r,x_0}(\Omega)$, we have 
    \begin{equation*}
        |\Psi_{r,x_0}(x)-\overline{x_1}|=|\Psi_{r,x_0}(x)-\Psi_{r,x_0}(x_1)|=|x-\overline{x}|/r\geq |x-x_0|/r=|\Psi_{r,x_0}(x)-\Psi_{r,x_0}(\overline{x})|,
    \end{equation*}
    which implies \eqref{ineq1.dist}. This completes the proof.
\end{proof}

We now provide a decay type estimate for the size of an $r$-neighborhood of $\partial\Omega$ in terms of the flatness of $\Omega$, which will be used in Section \ref{sec.sob}. Our argument is based on \cite[Lemma B.3.5]{FerRos24}.
\begin{lemma}\label{lem.reidist}
    For $r\in(0,2^{-10}]$,  consider
    \begin{align*}
        D_r\coloneqq\{x\in\Omega\,:d_\Omega(x)\leq r\}.
    \end{align*}
   If $\Omega$ is $(\delta,1)$-Reifenberg flat for $\delta \in (0, 2^{-n}]$, then we have
    \begin{align*}
        |D_{\delta r}\cap B_1|\leq c\left(\delta|D_{r}\cap B_1|+r\right)
    \end{align*}
    for some constant $c=c(n)\geq1$.
\end{lemma}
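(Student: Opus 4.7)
The plan is to build a Vitali-type covering of $\partial\Omega\cap B_1$ by balls of radius of order $r$ and use Reifenberg flatness in two ways: once to trap $D_{\delta r}\cap B_{cr}(z_i)$ inside a slab of width of order $\delta r$ around a hyperplane through $z_i$ (giving a local volume bound of order $\delta r^n$), and once to give a lower bound of order $r^n$ on $|B_r(z_i)\cap\Omega|$, which — together with disjointness of the $B_r(z_i)$ — converts the count of boundary balls into an estimate of $|D_r\cap B_1|$.

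First I would pick a maximal $2r$-separated collection $\{z_i\}_{i=1}^N\subset \partial\Omega\cap B_{1-3r}$. By maximality the doubles $B_{2r}(z_i)$ cover $\partial\Omega\cap B_{1-3r}$, while the balls $B_{r}(z_i)$ are pairwise disjoint and contained in $B_1$. For each $x\in D_{\delta r}\cap B_1$ and any nearest boundary point $z^x$ of $x$, either $z^x\in B_{1-3r}$ and $x\in B_{3r}(z_i)$ for some $i$, or $|x|\geq 1-4r$ and $x$ belongs to an annular strip of measure at most $c(n)r$. This reduces matters to estimating
\begin{equation*}
    |D_{\delta r}\cap B_1|\leq \sum_{i=1}^N |D_{\delta r}\cap B_{3r}(z_i)|+c(n)r.
\end{equation*}

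For the local slab bound, I would apply Definition \ref{defn.rei} at $z_i$ at scale $6r\leq 1$. In the resulting coordinates the upper inclusion puts $\Omega\cap B_{6r}(z_i)$ inside $\{y_n>-6\delta r\}$, while the lower inclusion forces $B_{2\delta r}(x)\subset \Omega$ — and hence $d_\Omega(x)>\delta r$ — whenever $x\in B_{3r}(z_i)\cap\Omega$ has $x_n$ exceeding a constant multiple of $\delta r$. Therefore $D_{\delta r}\cap B_{3r}(z_i)$ is trapped in a slab of width $C(n)\delta r$ and has measure at most $c(n)\delta r^n$. To count $N$, I would apply Definition \ref{defn.rei} at $z_i$ at scale $r$; since $\delta\leq 2^{-n}$, the lower inclusion gives $|B_r(z_i)\cap\Omega|\geq c(n) r^n$, and this set sits inside $D_r\cap B_1$ because every one of its points lies within distance $r$ of $z_i\in\partial\Omega$. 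Disjointness of the $B_r(z_i)$ then yields $N\leq c(n) r^{-n}|D_r\cap B_1|$, and combining the three bounds gives the claim.

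The only delicate point I anticipate is synchronising the scales so the same family $\{z_i\}$ does double duty: its elements must be far enough apart for the disjoint lower-bound step and close enough for the doubles $B_{2r}(z_i)$ to cover the relevant part of $\partial\Omega$. The separation $2r$ together with the smallness $\delta\leq 2^{-n}$ handle this comfortably, and no deeper input (compactness, Liouville-type arguments, or harmonic-analytic machinery) is needed — everything reduces to the elementary picture of $\Omega\cap B_r(z)$ as a $\delta r$-perturbation of a half-space.
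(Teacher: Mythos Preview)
Your proposal is correct and follows essentially the same approach as the paper: a Vitali-type covering of $\partial\Omega$ near $B_1$, a slab bound from Reifenberg flatness to control $|D_{\delta r}\cap B_{cr}(z_i)|$ by $c\delta r^n$, a lower bound $|B_r(z_i)\cap\Omega|\geq c r^n$ to convert the ball count into $|D_r\cap B_1|$, and an annular term $cr$ to absorb balls near $\partial B_1$. The only cosmetic differences are that the paper centers its Vitali balls in $\partial\Omega\cap B_{1+2r}$ and then splits the index set according to whether $B_{5r}(z_i)\subset B_1$, whereas you restrict centers to $\partial\Omega\cap B_{1-3r}$ from the start; and the paper invokes the measure density consequence \eqref{cond.meas} for the lower bound while you apply the Reifenberg inclusion directly.
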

\begin{remark}
    Note that if the domain is flat, then we directly have
    \begin{align*}
        |D_{\delta r}\cap B_1|\leq \delta|D_{r}\cap B_1|.
    \end{align*}
\end{remark}
\begin{proof}
Let us fix $r\in(0,2^{-10}]$ and $\delta \in (0,2^{-n}]$.
   Recall from \eqref{defn.rei} that for each $z_0\in \partial\Omega$, there is a new coordinate system $\{y_1,\ldots ,y_n\}$ such that $z_0=(z_0',0)$ and
    \begin{align}\label{ineq0.reidist}
         B_{5r}(z_0)\cap \{y_n>5 r\delta\}\subset B_{5r}(z_0)\cap \Omega\subset B_{5r}(z_0)\cap \{y_n>-5r\delta\}.
    \end{align}    
    Now we are going to construct a suitable covering of $D_{\delta r}\cap B_1$. To this end, we first use the Vitali covering lemma to have a countable collection of mutually disjoint balls $\{B_{r}(z_i)\}_{i\in I}$, where $z_i\in \partial\Omega\cap B_{1+2r}$, satisfying
    \begin{align}\label{ineq00.reidist}
        \bigcup_{z\in \partial\Omega\cap B_{1+2r}}B_r(z)\subset \bigcup_{i\in I}B_{5r}(z_i).
    \end{align}
    %In particular, for each $z\in \partial\Omega\cap B_1$, there is an $i\in I$ such that
    %\begin{align}\label{ineq00.reidist}
    %    B_{r}(z)\subset B_{5r}(z_i).
    %\end{align}
    In addition, we get
    \begin{align}\label{ineq000.reidist}
        \bigcup_{i\in I}(B_{r/4}(z_i)\cap \Omega)\subset D_{r}\quad\text{and}\quad (D_{r}\cap B_1)\subset\bigcup_{z\in \partial\Omega\cap B_{1+2r}}B_r(z)\subset \bigcup_{i\in I}B_{5r}(z_i).
    \end{align}
    Note that either
    \begin{align*}
         B_{5r}(z_i)\subset B_{1+12r}\setminus B_{1-8r}\quad\text{or}\quad B_{5r}(z_i)\subset B_1
    \end{align*}
    must hold; we accordingly set
    \begin{align*}
        I_1\coloneqq \{i\in I\,:\,B_{5r}(z_i)\subset B_{1+12 r}\setminus B_{1-8r} \}
    \end{align*}
    and
    \begin{align}\label{defn.I1}
        I_2\coloneqq\{i\in I\,:\,B_{5r}(z_i)\subset B_1\}.
    \end{align}
    If $i\in I_1$, then we have 
    \begin{align}\label{ineq1.reidist}
        \sum_{i\in I_1}|B_{5r}(z_i)|\leq 5^n\sum_{i\in I_1}|B_{r}(z_i)|\leq 5^n|B_{1+12r}\setminus B_{1-8r}|\leq cr,
    \end{align}
    where we have used the fact that $\{B_r(z_i)\}_{i\in I}$ is mutually disjoint. 
   For each $i\in {I}_2$, we define 
    \begin{equation*}
        U_r(z_i)\coloneqq B_{5r}(z_i)\cap \{-10r\delta<y_n<10r\delta \}
    \end{equation*}
    on the coordinate system $\{y_1,\ldots, y_n\}$ (see \eqref{ineq0.reidist}). Then we have
    \begin{align}\label{ineq11.reidist}
        |U_r(z_i)|\leq c\delta|B_{5r}(z_i)| \le c\delta|B_{r/4}(z_i)|
    \end{align}
    for some constant $c=c(n)$. Now we want to prove 
    \begin{equation}\label{ineq2.reidist}
        D_{\delta r}\cap B_1\subset  \left(\bigcup_{i\in I_1} B_{5r}(z_i)\right)\cup \left(\bigcup_{i\in I_2}U_{r}(z_i)\right).
    \end{equation}
    Suppose that $x\in \Omega\cap B_1 $ with $d_\Omega(x)\leq \delta r$, then there is a point $\overline{x}\in \partial\Omega$ such that $d_\Omega(x)=|x-\overline{x}|$.
    By \eqref{ineq00.reidist}, $B_{r}(\overline{x})\subset B_{5r}(z_i)$ for some $i\in I$. If $i\in I_1$, then \eqref{ineq2.reidist} directly follows from the fact that $x\in B_{\delta r}(\overline{x})\subset B_r(\overline{x})\subset B_{5r}(z_i)$. We next assume $i\in I_2$. Note that, on the coordinate system $\{y_1,\ldots, y_n\}$, \eqref{ineq0.reidist} and the fact that $d_\Omega(x)\leq \delta r$ imply
    \begin{equation*}
        \overline{x}\in B_{5r}(z_i)\cap \partial\Omega\subset B_{5r}(z_i)\cap \{-6r\delta\leq y_n\leq5r\delta \}. 
    \end{equation*}
    Therefore, we have 
    \begin{align*}
        x\in B_{5r}(z_i)\cap \{-7r\delta<y_n<7r\delta \}\subset U_r(z_i),
    \end{align*}
    which implies \eqref{ineq2.reidist}.
    Now we obtain the desired estimate as follows:
    \begin{align*}
        |D_{\delta r}\cap B_1| & \leq  \sum_{i\in I_1}| B_{5r}(z_i)|+\sum_{i\in I_2}|U_{r}(z_i)| \\ %&\leq c\left[r+\delta\sum_{i\in I_2}|B_{5r}(z_i)|\right]\\
        &\leq c\left[r+\delta\sum_{i\in I_2}|B_{r/4}(z_i)|\right]
        \leq c\left[r+\delta\sum_{i\in I_2}|B_{r/4}(z_i)\cap\Omega|\right]
        \leq c(r+\delta|D_r\cap B_1|)
    \end{align*}
    for some constant $c=c(n)$, where we have used \eqref{ineq1.reidist}, \eqref{ineq2.reidist}, \eqref{ineq11.reidist}, \eqref{defn.I1}, \eqref{ineq000.reidist}, \eqref{cond.meas} and the fact that $\{B_{r}(z_i)\}_{i\in I}$ is mutually disjoint. 
\end{proof}

\subsection{Supremum estimates near the boundary}
With the measure density condition \eqref{cond.meas}, we now provide supremum estimates near the boundary for fractional $p$-Laplace equations. To shorten notations, we consider the $s$-fractional gradient of $u$ defined by 
\begin{equation*}
    d_su(x,y)\coloneqq \frac{u(x)-u(y)}{|x-y|^s}\quad\text{for any }x\neq y.
\end{equation*}
First, we give an energy estimate near the boundary.
\begin{lemma}\label{lem.ene}
    Let $u\in W^{s,p}(B_r(x_0))\cap L^{p-1}_{sp}(\bbR^n)$ be a weak solution to 
     \begin{equation}\label{eq.ene}
\left\{
\begin{alignedat}{3}
(-\Delta_p)^s{u}&= f&&\qquad \mbox{in  $B_r(x_0)\cap \Omega$}, \\
{u}&=0&&\qquad  \mbox{in $B_r(x_0)\setminus \Omega$},
\end{alignedat} \right.
\end{equation}
where $f\in L^{p_*}(B_r(x_0)\cap\Omega)$ and $\Omega$ satisfies \eqref{cond.meas} with $R_0= r$. Then there is a constant $c=c(n,s,p)$ such that 
\begin{align*}
    \int_{B_{3r/4}(x_0)}\dashint_{B_{3r/4}(x_0)}|d_su|^p\frac{\,dx\,dy}{|x-y|^{n}}&\leq c\widetilde{E}(u/r^s;B_r(x_0))^p+c\left(\dashint_{B_r(x_0)\cap \Omega}|r^sf|^{p_*}\,dx\right)^{\frac{p}{p_*(p-1)}} .
\end{align*}
\end{lemma}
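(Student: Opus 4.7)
The plan is to derive a standard nonlocal Caccioppoli estimate by testing with $u\eta^p$ for a suitable cutoff $\eta$, and then invoke the known $L^\infty$-bound for weak solutions of the fractional $p$-Laplace equation to pass from the $L^p$-averages that naturally appear to the $L^{p-1}$-based quantity $\widetilde E$. By Lemma~\ref{lem.scale} I first reduce to $r=1$, $x_0=0$, and fix $\eta\in C_c^\infty(B_{5/6})$ with $0\le\eta\le 1$, $\eta\equiv 1$ on $B_{3/4}$, and $|\nabla\eta|\le c$. Since $u\equiv 0$ on $B_1\setminus\Omega$, the function $\phi=u\eta^p$ is admissible in $X_0^{s,p}(\Omega\cap B_1)$.

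Inserting $\phi$ into the weak formulation, I split $\iint_{\bbR^{2n}}$ into the inner part over $B_{5/6}\times B_{5/6}$ and the symmetric cross-part over $B_{5/6}\times(\bbR^n\setminus B_{5/6})$, and apply the classical algebraic inequality
\begin{equation*}
|a-b|^{p-2}(a-b)\bigl(a\eta(x)^p-b\eta(y)^p\bigr)\ge\tfrac{1}{c}|a-b|^p\max\{\eta(x),\eta(y)\}^p-c(|a|^p+|b|^p)|\eta(x)-\eta(y)|^p
\end{equation*}
to bound the inner diffusion term from below by $\tfrac{1}{c}\iint_{B_{3/4}\times B_{3/4}}|u(x)-u(y)|^p|x-y|^{-n-sp}\,dxdy$, which equals $\tfrac{|B_{3/4}|}{c}$ times the LHS of the lemma. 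The local error from $|\eta(x)-\eta(y)|$ contributes $c\|u\|_{L^p(B_{5/6})}^p$ via $|\eta(x)-\eta(y)|\le c|x-y|$ and the integrability of $|x-y|^{p(1-s)-n}$. In the cross-part, the uniform separation $|x-y|\ge c$ (for $x\in\operatorname{supp}\eta$ and $y\notin B_{5/6}$) together with $|u(x)-u(y)|^{p-1}\le c(|u(x)|^{p-1}+|u(y)|^{p-1})$ yields $c\,\mathrm{Tail}(u;B_1)^{p-1}\|u\|_{L^1(B_1)}+c\|u\|_{L^p(B_1)}^p$, and Young's inequality lifts the first summand to $c|B_1|\widetilde E(u;B_1)^p$. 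The source term is handled by Hölder with conjugate exponents $p_*$ and $p^*_s$ and the fractional Sobolev embedding of the compactly supported $u\eta^p$, giving $\bigl|\int fu\eta^p\bigr|\le c\|f\|_{L^{p_*}}[u\eta^p]_{W^{s,p}(\bbR^n)}$; a standard $\epsilon$-Young step then absorbs an $\epsilon$-fraction of $[u\eta]_{W^{s,p}}^p$ into the LHS and produces exactly $c(\dashint_{B_1\cap\Omega}|f|^{p_*})^{p/(p_*(p-1))}$ upon un-scaling (the case $sp\ge n$ is identical via $W^{s,p}\hookrightarrow L^q$ for large $q$).

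To dominate the residual $\|u\|_{L^p(B_{5/6})}^p$ by $\widetilde E^p$ plus the $f$-term, I invoke the standard interior $L^\infty$-bound for weak solutions of the fractional $p$-Laplace equation (cf.\ \cite{DicKuuPal,KuuMinSir15}), $\|u\|_{L^\infty(B_{7/8})}\le c\widetilde E(u;B_1)+c\|f\|_{L^{p_*}(B_1\cap\Omega)}^{1/(p-1)}$, and combine it with the elementary interpolation $\|u\|_{L^p}^p\le\|u\|_{L^\infty}^{p-1}\|u\|_{L^1}$ together with $\|u\|_{L^1(B_1)}\le c|B_1|(\dashint_{B_1}|u|^{p-1})^{1/(p-1)}$ (Hölder, valid for $p\ge 2$; a direct interpolation substitute covers $1<p<2$). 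The main obstacle is precisely this $L^p\to L^{p-1}$ conversion, which compels the use of the boundedness estimate and demands care with the exponent arithmetic---in particular with the direction of Hölder's inequality---in the singular regime $p\in(1,2)$.
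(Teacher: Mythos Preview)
Your Caccioppoli step and treatment of the source term are essentially the same as the paper's; the real issue is the passage from $\|u\|_{L^p}$ to the $L^{p-1}$-based quantity $\widetilde E$.

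The claimed supremum bound
\[
\|u\|_{L^\infty(B_{7/8})}\le c\,\widetilde E(u;B_1)+c\,\|f\|_{L^{p_*}(B_1\cap\Omega)}^{1/(p-1)}
\]
is not available under the hypotheses of the lemma. When $sp<n$ one has $p_*=\tfrac{np}{np-n+sp}<\tfrac{n}{sp}$, and $f\in L^{p_*}$ is strictly below the integrability threshold for local boundedness of weak solutions to $(-\Delta_p)^s u=f$; the De~Giorgi--Moser iteration in \cite{DicKuuPal,Coz17,KorKuuPal16} requires $f\in L^q$ with $q>n/(sp)$ (or $f\in L^\infty$, as in Lemma~\ref{lem.bdd}). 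So your interpolation $\|u\|_{L^p}^p\le\|u\|_{L^\infty}^{p-1}\|u\|_{L^1}$ cannot be closed, and the gap cannot be repaired by citing stronger boundedness results---they simply do not hold at this integrability. Note also that the paper's own Lemma~\ref{lem.bdd} both assumes $f\in L^\infty$ and \emph{uses} Lemma~\ref{lem.ene} in its proof, so invoking it here would in any case be circular.

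The paper bypasses boundedness entirely. After the Caccioppoli inequality it applies the fractional Sobolev--Poincar\'e inequality (using the measure density condition and $u\psi\equiv0$ on $B_R\setminus\Omega$, cf.\ \cite[Corollary~4.9]{Coz17}) to obtain an $L^{p_1}$-bound for $u\psi$ with $p_1>p$. Then the interpolation
\[
\|u\|_{L^p}\le\|u\|_{L^{p-1}}^\theta\|u\|_{L^{p_1}}^{1-\theta},\qquad \tfrac1p=\tfrac{\theta}{p-1}+\tfrac{1-\theta}{p_1},
\]
combined with Young's inequality, produces an estimate of the form
\[
\|u\|_{L^{p_1}(B_\rho)}^{p}\le \tfrac12\|u\|_{L^{p_1}(B_R)}^{p}+\frac{c}{(R-\rho)^{np/\theta}}\Bigl[\widetilde E(u;B_1)^p+\|f\|_{L^{p_*}}^{p/(p-1)}\Bigr],
\]
and the standard iteration lemma \cite[Lemma~6.1]{Giu03} over $7/8\le\rho<R\le1$ absorbs the first term on the right. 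This interpolation--iteration device is exactly what replaces your appeal to $L^\infty$.
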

\begin{proof}
By Lemma \ref{lem.scale}, it suffices to consider the case that $B_r(x_0)=B_1$. Let us fix $7/8\leq \rho<R\leq 1$ and take a cut-off function $\psi\in C_c^\infty(B_{(\rho+R)/2})$ with $0 \le \psi \le 1$, $\psi\equiv 1$ in $B_\rho$ and $|\nabla\psi|\leq 16/(R-\rho)$. Testing \eqref{eq.ene} with $u\psi^{p}$, using \eqref{cond.meas} and then following the proof of \cite[Theorem 1.4]{DicKuuPal}, we get 
    \begin{equation}\label{ineq1.ene}
\int_{B_{R}}\dashint_{B_{R}}|d_s(u\psi)|^p\frac{\,dx\,dy}{|x-y|^{n}}\leq \frac{c}{(R-\rho)^{np}}\left[\dashint_{B_R}|u|^p\,dx+\mathrm{Tail}(u;B_R)^p \right]+c\dashint_{B_R\cap\Omega}|fu|\psi^{p}\,dx.
    \end{equation}
    For the last term in the right-hand side, we use H\"older's inequality 
    and the fractional Sobolev-Poincar\'e inequality given in \cite[Lemma 2.4]{Now23} to obtain
    \begin{equation*}%\label{ineq11.ene}
    \begin{aligned}
         \dashint_{B_R\cap\Omega}|fu|\psi^{p}\,dx&\leq c\left(\dashint_{B_R\cap \Omega}|f|^{p_*}\,dx\right)^{1/p_*}\left(\int_{B_R}\dashint_{B_{R}}|d_s(u\psi)|^{p}\frac{\,dx\,dy}{|x-y|^{n}}\right)^{1/p}\\
         &\quad+c\left(\dashint_{B_R\cap \Omega}|f|^{p_*}\,dx\right)^{1/p_*}\left(\dashint_{B_R}|u\psi|^{p}\,dx\right)^{1/p},
    \end{aligned}
    \end{equation*}
    where $c=c(n,s,p)$. We next apply
    \cite[Corollary 4.9]{Coz17} to the last term in the above display, together with the fact that $u\psi\equiv 0$ in $B_{R}\setminus \Omega$, where $|B_{R}\setminus\Omega|\geq 4^{-n}|B_R|$, to see that
    \begin{equation}\label{ineq2.ene}
        \dashint_{B_R\cap\Omega}|fu|\psi^{p}\,dx \leq c\left(\dashint_{B_R\cap \Omega}|f|^{p_*}\,dx\right)^{1/p_*}\left(\int_{B_{R}}\dashint_{B_{R}}|d_s(u\psi)|^p\frac{\,dx\,dy}{|x-y|^{n}}\right)^{1/p}
    \end{equation}
    for some constant $c=c(n,s,p)$. 
    Plugging this last display into \eqref{ineq1.ene}, applying Young's inequality and then reabsorbing terms, we deduce 
    \begin{align}\label{ineq3.ene}
        \int_{B_{R}}\dashint_{B_{R}}|d_s(u\psi)|^p\frac{\,dx\,dy}{|x-y|^{n}}&\leq \frac{c}{(R-\rho)^{np}}\left[\dashint_{B_R}|u|^p\,dx+\mathrm{Tail}(u;B_R)^p +\left(\dashint_{B_R\cap\Omega}|f|^{p_*}\,dx\right)^{\frac{p}{p_*(p-1)}}\right]
    \end{align}
    for some constant $c=c(n,s,p)$. Similarly to the estimate of \eqref{ineq2.ene}, by applying \cite[Corollary 4.9]{Coz17} to the left-hand side of \eqref{ineq3.ene}, we get
    \begin{align*}
       \left(\dashint_{B_{R}}|u\psi|^{p_1} \,dx\right)^{\frac{p}{p_1}}\leq \frac{c}{(R-\rho)^{np}}\left[\dashint_{B_R}|u|^p\,dx+\mathrm{Tail}(u;B_R)^p +\left(\dashint_{B_R\cap\Omega}|f|^{p_*}\,dx\right)^{\frac{p}{p_*(p-1)}}\right]
    \end{align*}
    for some constant $c=c(n,s,p)$, where 
    \begin{align*}
        p_1\coloneqq\begin{cases}
            np/(n-sp)&\quad\text{if }sp<n,\\
            2p&\quad\text{if }sp\geq n.
        \end{cases}
    \end{align*}
    Note in particular that $p-1 < p < p_1$. Accordingly, we have the following interpolation inequality:
    \begin{equation*}
        \left(\dashint_{B_R}|u|^{p}\,dx \right)^{1/p} \le \left(\dashint_{B_R}|u|^{p-1}\,dx\right)^{\theta/(p-1)}\left(\dashint_{B_R}|u|^{p_1}\,dx\right)^{(1-\theta)/p_1},
    \end{equation*}
    where the constant $\theta = \theta(n,s,p)\in(0,1)$ is given by
    \begin{align*}
        \frac1p=\frac{\theta}{p-1}+\frac{1-\theta}{p_1}.
    \end{align*}
    Using this and Young's inequality, along with the fact that $\psi\equiv 1$ in $B_\rho$ and $7/8\leq \rho < R\leq 1$, we get
    \begin{align*}
       \left(\int_{B_{\rho}}|u|^{p_1} \,dx\right)^{\frac{p}{p_1}}&\leq \frac{1}{2}\left(\int_{B_{R}}|u|^{{p}_1} \,dx\right)^{\frac{p}{{p}_1}}\\
       &\,+ \frac{c}{(R-\rho)^{\frac{np}{\theta}}}\left[\left(\dashint_{B_R}|u|^{p-1}\,dx\right)^{\frac{p}{p-1}}+\mathrm{Tail}(u;B_R)^p +\left(\dashint_{B_R\cap\Omega}|f|^{p_*}\,dx\right)^{\frac{p}{p_*(p-1)}}\right]
    \end{align*}
    for some constant $c=c(n,s,p)$.
    After a few simple computations, we arrive at
    \begin{align*}
       \left(\int_{B_{\rho}}|u|^{p_1} \,dx\right)^{\frac{p}{p_1}}\leq \frac{1}{2}\left(\int_{B_{R}}|u|^{{p}_1} \,dx\right)^{\frac{p}{p_1}}
       + \frac{c}{(R-\rho)^{\frac{np}{\theta}}}\left[\widetilde{E}(u;B_1)^p +\left(\dashint_{B_1\cap\Omega}|f|^{p_*}\,dx\right)^{\frac{p}{p_*(p-1)}}\right]
    \end{align*}
    for some constant $c=c(n,s,p)$, whenever $7/8\leq \rho< R\leq 1$. By using \cite[Lemma 6.1]{Giu03}, we get 
    \begin{align}\label{ineq4.ene}
      \int_{B_{7/8}}|u|^{p}\,dx\leq \left(\int_{B_{7/8}}|u|^{p_1}\,dx\right)^{p/p_1}\leq c\left[\widetilde{E}(u;B_1)^p +\left(\dashint_{B_1\cap\Omega}|f|^{p_*}\,dx\right)^{\frac{p}{p_*(p-1)}}\right],
    \end{align}
    where we have also used H\"older's inequality for the first inequality. On the other hand, by \eqref{ineq3.ene} with this time $\rho=3/4$ and $R=7/8$, we have 
    \begin{align*}
        [u]^p_{W^{s,p}(B_{3/4})}\leq c\left(\|u\|^p_{L^p(B_{7/8})}+\mathrm{Tail}(u;B_{7/8})^p+\|f\|^{p/(p-1)}_{L^{p_*}(B_{7/8}\cap \Omega)}\right).
    \end{align*}
    Combining this last display and \eqref{ineq4.ene} leads to the desired estimate.
\end{proof}
We next give a boundedness result.
\begin{lemma}\label{lem.bdd}
    Let $u\in W^{s,p}(B_r(x_0))\cap L^{p-1}_{sp}(\bbR^n)$ be a weak solution to 
     \begin{equation*}
\left\{
\begin{alignedat}{3}
(-\Delta_p)^s{u}&= f&&\qquad \mbox{in  $B_r(x_0)\cap \Omega$}, \\
{u}&=0&&\qquad  \mbox{in $B_r(x_0)\setminus \Omega$},
\end{alignedat} \right.
\end{equation*}
where $f\in L^\infty(B_r(x_0) \cap \Omega)$ and $\Omega$ satisfies \eqref{cond.meas} with $R_0\leq r$. Then there is a constant $c=c(n,s,p)$ such that
\begin{equation*}
\|u\|_{L^\infty(B_{7r/8}(x_0))}\leq c\widetilde{E}(u;B_{r}(x_0))+c\|r^{sp}f\|^{1/(p-1)}_{L^\infty(B_r(x_0)\cap\Omega)}.
\end{equation*}
\end{lemma}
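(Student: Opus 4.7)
The plan is to adapt a De Giorgi-type iteration to the nonlocal setting, using the measure density condition \eqref{cond.meas} in place of smoothness of $\partial\Omega$ to close the Sobolev-Poincar\'e step. After rescaling to $r=1$, $x_0=0$ by Lemma~\ref{lem.scale}, I fix a level $k>0$ to be chosen, and for each integer $j\ge 0$ set
\[
\rho_j \coloneqq \tfrac{7}{8} + 2^{-j-3}, \qquad k_j \coloneqq k(1-2^{-j}), \qquad w_j \coloneqq (u-k_j)_+.
\]
I then test the weak formulation with $w_{j+1}\psi_j^p$, where $\psi_j\in C_c^\infty(B_{(\rho_j+\rho_{j+1})/2})$ is a standard cutoff equal to $1$ on $B_{\rho_{j+1}}$. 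Following the classical computation for the fractional $p$-Laplacian (see \cite{DicKuuPal,Coz17}), I obtain a Caccioppoli-type inequality schematically of the form
\[
[w_{j+1}\psi_j]^p_{W^{s,p}(\bbR^n)} \le C\, 2^{cj}\!\int_{B_{\rho_j}}\! w_{j+1}^p\,dx + C\,\mathrm{Tail}(w_{j+1};B_{\rho_j})^{p-1}\!\!\int_{B_{\rho_j}}\! w_{j+1}\,dx + C\|f\|_{L^\infty}\!\int_{B_{\rho_j}}\! w_{j+1}\,dx,
\]
where the tail term is controlled uniformly in $j$ by $c(\mathrm{Tail}(u;B_1)^{p-1} + k^{p-1})$, since $w_{j+1}(y)\le u_+(y)+k$ for $y$ outside $B_{\rho_j}$.

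Next, I exploit that $w_{j+1}\psi_j$ vanishes on $B_{\rho_j}\setminus\Omega$: when $B_{\rho_j}$ meets $\partial\Omega$, the measure density condition \eqref{cond.meas} gives $|B_{\rho_j}\setminus\Omega|\ge 4^{-n}|B_{\rho_j}|$, so the fractional Sobolev-Poincar\'e inequality \cite[Corollary~4.9]{Coz17} upgrades the Gagliardo seminorm above to an $L^{p_1}$ bound with $p_1>p$ as in the proof of Lemma~\ref{lem.ene}; in the opposite case $B_{\rho_j}\subset\Omega$, the desired estimate reduces to the interior $L^\infty$ bound of \cite{DicKuuPal}, which can be invoked directly. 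Combining these ingredients with H\"older's inequality and the Chebyshev-type bound $|\{u>k_{j+1}\}\cap B_{\rho_j}|\le (k_{j+1}-k_j)^{-p}\int w_j^p\,dx$ yields a nonlinear recursion
\[
A_{j+1} \le C\,b^j\,k^{-\mu}\,A_j^{1+\varepsilon}, \qquad A_j \coloneqq \int_{B_{\rho_j}} w_j^p\,dx,
\]
for some $\varepsilon,\mu>0$ and $b>1$ depending only on $n,s,p$.

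By the classical iteration lemma \cite[Lemma~7.1]{Giu03}, $A_j \to 0$ as $j\to\infty$ provided $A_0\le c^{-1}$, and this smallness is arranged by picking $k = c\bigl[\widetilde{E}(u;B_1) + \|f\|_{L^\infty(B_1\cap\Omega)}^{1/(p-1)}\bigr]$ for a suitable constant $c=c(n,s,p)$. This gives $u\le k$ on $B_{7/8}$, and the symmetric argument applied to $-u$ (which solves the equation with right-hand side $-f$) yields the lower bound; undoing the scaling produces the estimate in the statement. The main technical subtlety I expect is the uniform control of $\mathrm{Tail}(w_{j+1};B_{\rho_j})$ throughout the iteration: its linear growth in $k$ must be balanced against the Sobolev gain $p\to p_1$ so that the recursion genuinely improves. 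This is by now a standard feature of nonlocal De Giorgi arguments, and the measure density condition \eqref{cond.meas} is precisely what makes it work in the present rough-domain setting.
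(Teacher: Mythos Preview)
Your De Giorgi iteration is the paper's approach, but there is a gap at the final step. The iteration condition is not $A_0\le c^{-1}$; the recursion $A_{j+1}\le Cb^jk^{-\mu}A_j^{1+\varepsilon}$ converges only when $A_0\lesssim k^{\mu/\varepsilon}$, which in the standard nonlocal De Giorgi scheme amounts to requiring $k\ge c\|u_+\|_{L^p(B_1)}$ (plus tail and data). Since $\widetilde{E}(u;B_1)$ carries only the $L^{p-1}$ average, your choice $k=c\,\widetilde{E}(u;B_1)+c\|f\|_{L^\infty}^{1/(p-1)}$ does not by itself close this: in general $(\dashint|u|^{p-1})^{1/(p-1)}$ does not dominate $(\dashint|u|^p)^{1/p}$, so $A_0=\int_{B_1}u_+^p$ may be much larger than your $k^p$.

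The paper resolves this in two stages. First, the De Giorgi iteration (together with the energy estimate of Lemma~\ref{lem.ene}) is recorded only as an $L^p$-based bound, $\|u\|_{L^\infty(B_{7/8})}\le c(\dashint_{B_1}|u|^p)^{1/p}+c\,\mathrm{Tail}(u;B_1)+c\|f\|_{L^\infty}^{1/(p-1)}$. Second, the paper invokes \cite[Corollary~2.1]{KuuMinSir15}: interpolate $\|u\|_{L^p}$ between $\|u\|_{L^\infty}$ and $\|u\|_{L^{p-1}}$, apply Young's inequality, and absorb the resulting $L^\infty$ piece over a chain of shrinking balls via \cite[Lemma~6.1]{Giu03}. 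An equivalent fix within your framework is to first invoke \eqref{ineq4.ene} from the proof of Lemma~\ref{lem.ene}, which (using the measure density \eqref{cond.meas}) already bounds $\|u\|_{L^p(B_{7/8})}$ by $\widetilde{E}(u;B_1)$ plus data, and only then run the iteration inside $B_{7/8}$. Either route completes the argument, but the sentence ``smallness is arranged by picking $k=c\,\widetilde{E}$'' does not go through as written.
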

\begin{proof}
    Similarly to the proof of \cite[Theorem 5]{KorKuuPal16}, we get 
    \begin{align*}
         \|u\|_{L^\infty(B_{7r/8}(x_0))}&\leq c\left(\dashint_{B_{r}(x_0)}|u|^p \,dx\right)^{1/p}+c\mathrm{Tail}(u;B_r(x_0))^p+c\|r^{sp}f\|^{1/(p-1)}_{L^\infty(B_r(x_0)\cap\Omega)}
    \end{align*}
    by using Lemma \ref{lem.ene} and a standard De Giorgi type iteration as in \cite[Proposition 6.1]{Coz17}. We now follow the arguments given in \cite[Corollary 2.1]{KuuMinSir15} to get the desired estimate.
\end{proof}
We end this section with a boundary version of the localization argument.
\begin{lemma}\label{lem.locarg}
    Let $u\in W^{s,p}(B_{4R}(x_0))\cap L^{p-1}_{sp}(\bbR^n)$ be a weak solution to 
    \begin{equation*}
\left\{
\begin{alignedat}{3}
(-\Delta_p)^s{u}&= f&&\qquad \mbox{in  $\Omega \cap B_{4R}(x_0)$}, \\
{u}&=0&&\qquad  \mbox{in $B_{4R}(x_0)\setminus \Omega$}.
\end{alignedat} \right.
\end{equation*}
Let us fix a cut-off function $\psi\in C^\infty_{c}(B_{11R/4}(x_0))$ with $0 \le \psi \le 1$ and $\psi\equiv 1$ in $B_{5R/2}(x_0)$. Then  $v\coloneqq u\psi\in W^{s,p}(\bbR^n)$ is a weak solution to 
\begin{equation}\label{eq.locarg}
\left\{
\begin{alignedat}{3}
(-\Delta_p)^s{v}&= f+\mu&&\qquad \mbox{in  $\Omega \cap B_{2R}(x_0)$}, \\
{v}&=0&&\qquad  \mbox{in $B_{2R}(x_0)\setminus \Omega$},
\end{alignedat} \right.
\end{equation}
where 
\begin{equation}\label{mu.locarg}
    \mu(x)\coloneqq \int_{\bbR^n\setminus B_{5R/2}(x_0)}\frac{(|d_sv|^{p-2}d_sv)(x,y)-(|d_su|^{p-2}d_su)(x,y)}{|x-y|^{n+s}}\,dy, \quad x \in B_{2R}(x_0)
\end{equation}
satisfies
\begin{align*}
    |\mu(x)|\leq cR^{-sp}\left(|u(x)|^{p-1}+\mathrm{Tail}(u;B_{5R/2}(x_0))^{p-1}\right).
\end{align*}
\end{lemma}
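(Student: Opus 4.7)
\medskip

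\textbf{Proof proposal.} The plan is to test the weak formulation for $u$ with an arbitrary $\phi \in X^{s,p}_0(\Omega \cap B_{2R}(x_0))$, compute the double-integral expression for $v$ against the same test, and show that the difference reproduces $\int \phi\mu\,dx$. The computation then splits $\bbR^{2n}$ according to whether each coordinate lies inside or outside $B_{5R/2}(x_0)$ (where $\psi\equiv 1$) and inside or outside $B_{2R}(x_0)$ (where $\phi$ is supported).

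First I would check $v=u\psi\in W^{s,p}(\bbR^n)$ and $v\equiv 0$ in $B_{2R}(x_0)\setminus\Omega$. The second property is immediate since $u\equiv 0$ in $B_{4R}(x_0)\setminus\Omega\supset B_{2R}(x_0)\setminus\Omega$. For the first, one uses $u\in W^{s,p}(B_{4R}(x_0))$, $\psi\in C^\infty_c(B_{11R/4}(x_0))$, a standard product rule on $B_{11R/4}(x_0)\times B_{11R/4}(x_0)$, and the fact that $v\equiv 0$ outside $B_{11R/4}(x_0)$ reduces the remaining tail contribution to $\|v\|^p_{L^p(\bbR^n)}\cdot\sup_{x\in B_{11R/4}(x_0)}\int_{\bbR^n\setminus B_{11R/4}(x_0)}|x-y|^{-(n+sp)}\,dy<\infty$. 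Any such $\phi$ extended by zero is admissible in the weak form for $u$, yielding $\iint_{\bbR^{2n}}(|d_su|^{p-2}d_su)(x,y)(\phi(x)-\phi(y))|x-y|^{-(n+s)}\,dx\,dy=\int_{\Omega\cap B_{2R}(x_0)}f\phi\,dx$.

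The main step is to compute the analogous double integral for $v$ minus that for $u$. Its integrand,
\begin{equation*}
[(|d_sv|^{p-2}d_sv)(x,y)-(|d_su|^{p-2}d_su)(x,y)](\phi(x)-\phi(y))|x-y|^{-(n+s)},
\end{equation*}
vanishes when both $x,y\in B_{5R/2}(x_0)$, because there $\psi\equiv 1$ and so $v=u$; and it vanishes when both lie outside $B_{2R}(x_0)$, because there $\phi$ vanishes. Since $B_{2R}(x_0)\subset B_{5R/2}(x_0)$, the only surviving region is $\{(x,y):x\in B_{2R}(x_0),\ y\in\bbR^n\setminus B_{5R/2}(x_0)\}$ together with its mirror image. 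The integrand is symmetric in $(x,y)$ (the bracket and $\phi(x)-\phi(y)$ are both antisymmetric), so after applying Fubini and collecting both pieces the difference reduces to $\int_{\Omega\cap B_{2R}(x_0)}\phi(x)\mu(x)\,dx$ with $\mu$ as in \eqref{mu.locarg}; here I would verify that the normalization in \eqref{mu.locarg} is consistent with the paper's weak formulation. This identifies $v$ as a weak solution of \eqref{eq.locarg}.

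Finally, for the pointwise bound on $\mu$: for $x\in\Omega\cap B_{2R}(x_0)$ and $y\in\bbR^n\setminus B_{5R/2}(x_0)$ one has $v(x)=u(x)$ and $v(y)=0$, so the numerator collapses to $|u(x)|^{p-2}u(x)-|u(x)-u(y)|^{p-2}(u(x)-u(y))$. Using $|a-b|^{p-1}\leq c(|a|^{p-1}+|b|^{p-1})$ and the elementary geometric estimate $|x-y|\geq|y-x_0|/5$ valid on this region, I would bound
\begin{equation*}
|\mu(x)|\leq c\int_{\bbR^n\setminus B_{5R/2}(x_0)}\frac{|u(x)|^{p-1}+|u(y)|^{p-1}}{|y-x_0|^{n+sp}}\,dy\leq cR^{-sp}\bigl(|u(x)|^{p-1}+\mathrm{Tail}(u;B_{5R/2}(x_0))^{p-1}\bigr).
\end{equation*}
The main obstacle is really just bookkeeping: partitioning $\bbR^{2n}$ according to the supports of $1-\psi$ and $\phi$ and tracking the symmetry of the resulting pieces. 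Absolute convergence throughout follows automatically from $u\in L^{p-1}_{sp}(\bbR^n)$ combined with the lower bound $|x-y|\gtrsim R$ on the relevant regions.
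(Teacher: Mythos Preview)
Your approach is essentially the same as the paper's, which defers the weak-formulation identity to \cite[Lemma~3.1]{DieKimLeeNow24} and then bounds $\mu$ via the geometric estimate $|x-y|\gtrsim |y-x_0|$ on the relevant region. One small slip: for $y\in\bbR^n\setminus B_{5R/2}(x_0)$ you do \emph{not} have $v(y)=0$, since $\psi$ is only required to vanish outside $B_{11R/4}(x_0)$ and $5R/2<11R/4$; on the annulus $B_{11R/4}(x_0)\setminus B_{5R/2}(x_0)$ one merely has $|v(y)|=|u(y)\psi(y)|\le|u(y)|$. This is exactly what the paper uses, and with that correction your numerator bound and the final estimate on $|\mu(x)|$ go through unchanged.
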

\begin{proof}
    By following the proof of \cite[Lemma 3.1]{DieKimLeeNow24}, we see that $v$ is a weak solution to \eqref{eq.locarg} with the right-hand side $\mu=\mu(x)$ determined in \eqref{mu.locarg}. Due to the facts that $|v(x)|\leq |u(x)|$ for any $x\in \bbR^n$ and that 
    \begin{align*}
        |x-y|\geq |y-x_0|-|x-x_0|\geq |y-x_0|/4
    \end{align*}
    for any $y\in \bbR^n\setminus B_{5R/2}(x_0)$ and  $x\in B_{2R}(x_0)$, we have 
    \begin{align*}
        |\mu(x)|\leq c\int_{\bbR^n\setminus B_{5R/2}(x_0)}\frac{|u(x)|^{p-1}+|u(y)|^{p-1}}{|x_0-y|^{n+sp}}\,dy
    \end{align*}
    for some constant $c=c(n,s,p)$. After a few simple computations, we get the desired estimate.
\end{proof}

\section{Regularity in flat domains}
In this section, we prove a supremum estimate for $v/d^s$, where $v$ is a weak solution to a nonhomogeneous fractional $p$-Laplace equation in a flat domain, by employing a barrier constructed in \cite[Lemma 4.3]{IanMosSqu16}.
\begin{lemma}\label{lem.dsest}
    Let $v\in W^{s,p}(B_r(z))\cap L^{p-1}_{sp}(\bbR^n)$ be a weak solution to 
    \begin{equation*}
\left\{
\begin{alignedat}{3}
(-\Delta_p)^s{v}&= f&&\qquad \mbox{in  $B_r^{+}(z)$}, \\
{v}&=0&&\qquad  \mbox{in $B_r^{-}(z)$},
\end{alignedat} \right.
\end{equation*}
where $f\in L^\infty(B_r^+(z))$. Then there exists a constant $c=c(n,s,p)$ such that 
\begin{equation*}
    \|v/d_{B^+_r(z)}^s\|_{L^{\infty}(B^{+}_{r/2}(z))}\leq c\left(\widetilde{E}(v/r^s;B_{r}(z))+\|r^{s}f\|^{1/(p-1)}_{L^\infty(B_r^+(z))}\right).
\end{equation*}
\end{lemma}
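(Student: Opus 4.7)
The plan is a three-step procedure: scaling reduction, localization via Lemma \ref{lem.locarg}, and a barrier–comparison argument using the explicit barrier constructed in \cite[Lemma 4.3]{IanMosSqu16}.

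First I would reduce to the case $r=1$, $z=0$ via Lemma \ref{lem.scale} (together with Lemma \ref{lem.rei} to transform the distance function correctly). Setting $M\coloneqq \widetilde{E}(v;B_1)+\|f\|_{L^\infty(B_1^+)}^{1/(p-1)}$ (which we may assume positive, else $v\equiv 0$) and replacing $(v,f)$ by $(v/M,\,f/M^{p-1})$, it suffices to prove $\|v/d_{B_1^+}^s\|_{L^\infty(B_{1/2}^+)}\le c(n,s,p)$ under the normalized hypotheses $\widetilde{E}(v;B_1)\le 1$ and $\|f\|_{L^\infty(B_1^+)}\le 1$. Since the half-space $\{x_n>0\}$ satisfies \eqref{cond.meas} trivially, Lemma \ref{lem.bdd} produces the auxiliary bound $\|v\|_{L^\infty(B_{7/8})}\le c$. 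Moreover, for $x\in B_{1/2}^+$ one checks that $d_{B_1^+}(x)=x_n$, so the goal reduces to showing $|v(x)|\le c\,x_n^s$ on $B_{1/2}^+$.

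Second, I would localize using Lemma \ref{lem.locarg} with $R=1/4$: fixing $\psi\in C_c^\infty(B_{11/16})$ with $\psi\equiv 1$ on $B_{5/8}$, the function $\tilde v\coloneqq v\psi\in W^{s,p}(\bbR^n)$ is compactly supported, agrees with $v$ on $B_{1/2}$, vanishes in $\{x_n\le 0\}\cap B_{1/2}$, and weakly solves $(-\Delta_p)^s\tilde v=f+\mu$ in $B_{1/2}^+$ with $\|\mu\|_{L^\infty(B_{1/2}^+)}\le c$ by the $L^\infty$ and tail controls from the previous step. At this point I would invoke the barrier $\Phi$ from \cite[Lemma 4.3]{IanMosSqu16}: a nonnegative bounded function on $\bbR^n$ with $\Phi\equiv 0$ in $\{x_n\le 0\}$, satisfying $(-\Delta_p)^s\Phi\ge 1$ weakly in $B_{1/2}^+$, the pointwise bound $\Phi(x)\le C\,x_n^s$ for $x\in B_{1/2}^+$, and uniformly bounded below on the remaining portion of $\bbR^n\setminus B_{1/2}^+$ meeting $\{x_n>0\}$. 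Choosing $K$ large enough so that $K^{p-1}\ge 1+\|\mu\|_\infty$ and $K\Phi\ge\|\tilde v\|_{L^\infty}$ outside $B_{1/2}^+$, the weak comparison principle for the fractional $p$-Laplacian applied on $B_{1/2}^+$ yields $\tilde v\le K\Phi\le cK\,x_n^s$ in $B_{1/2}^+$. Running the same argument with $-v$ (which solves the same problem with right-hand side $-f$) furnishes the matching lower bound.

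The main obstacle I anticipate is verifying simultaneously the three properties of $\Phi$ that the argument exploits: the strict super-equation $(-\Delta_p)^s\Phi\ge 1$ in $B_{1/2}^+$, the sharp pointwise estimate $\Phi(x)\le C\,x_n^s$ on $B_{1/2}^+$, and a uniform positive lower bound on the upper half-portion of $\bbR^n\setminus B_{1/2}^+$ so that a multiple of $\Phi$ dominates $\tilde v$ there. Should the barrier in \cite[Lemma 4.3]{IanMosSqu16} lack the third property in full generality, it can be repaired by adding to $\Phi$ a smooth cutoff supersolution supported in a thin annular neighborhood of the spherical part of $\partial B_{1/2}^+$; this modification contributes a uniformly bounded amount to the fractional $p$-Laplacian while preserving the sharp $d^s$-decay at the flat boundary.
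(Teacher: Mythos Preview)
Your overall strategy---scaling, localization, barrier comparison---matches the paper's, but there is a genuine gap in how you describe and use the barrier. The function supplied by \cite[Lemma~4.3]{IanMosSqu16} is an \emph{exterior-ball} barrier: $w\in C^s(\bbR^n)$ with $c_2^{-1}(|x|-1)_+^s\le w\le c_2(|x|-1)_+^s$ and $(-\Delta_p)^s w\ge a$ only on the small cap $B_{r_0}(e_n)\setminus\overline B_1$, where $r_0=r_0(n,s,p)$. After translating $\tilde w(x)\coloneqq w(x+e_n)$, one obtains a supersolution only in $B_{r_0}^+$, not in all of $B_{1/2}^+$; moreover the upper bound $\tilde w(x)\le C\,x_n^s$ fails at points with $x_n\ll|x'|$. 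So neither your property~(1) nor~(2) holds on the full half-ball, and a single comparison on $B_{1/2}^+$ is unavailable. Your proposed repair (adding a cutoff correction) targets the wrong defect, and in any case for $p\ne2$ the sum of a supersolution and a bounded perturbation need not remain a supersolution.

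The paper accommodates the actual barrier by arguing pointwise: for each $x_0=(x_0',x_{0,n})\in B_{1/2}^+$ it translates $v_0(x)\coloneqq v(x+(x_0',0))$, localizes $h\coloneqq v_0\psi$ with $\psi\in C_c^\infty(B_{3/8})$, and compares $h$ with $M\tilde w$ on the \emph{small} set $B_{r_0}^+$. The ordering $h\le M\tilde w$ on $\bbR^n\setminus B_{r_0}^+$ is checked via $h\equiv0$ on $\bbR^n_-\cup(\bbR^n_+\setminus B_{3/8})$, the $L^\infty$ bound on $v$ from Lemma~\ref{lem.bdd}, and the fact that $\tilde w$ is bounded below by a positive constant on $\bbR^n_+\setminus B_{r_0}^+$. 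This yields $|v(x_0)|=|h(0,x_{0,n})|\le c\,x_{0,n}^s$ when $x_{0,n}\le r_0$; the remaining case $x_{0,n}\ge r_0$ follows directly from the $L^\infty$ bound since $x_{0,n}^s\ge r_0^s$ is bounded below by a constant depending only on $n,s,p$.
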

\begin{proof}
Since $v_{r,z}(x)\coloneqq v(rx+z)/r^s$ is a weak solution to 
\begin{equation*}
\left\{
\begin{alignedat}{3}
(-\Delta_p)^s{v}_{r,z}&= f_{r,z}&&\qquad \mbox{in  $B_1^{+}$}, \\
{v}_{r,z}&=0&&\qquad  \mbox{in $B_1^{-}$},
\end{alignedat} \right.
\end{equation*}
where $f_{r,z}(x)\coloneqq r^sf(rx+z)$, by Lemma \ref{lem.rei}, it suffices to consider the case that $B_r(z)=B_1$.
First, we note from Lemma \ref{lem.bdd} that
    \begin{align}\label{ineq1.dsest}
        \|v\|_{L^\infty(B_{7/8})}\leq c_0\left(\widetilde{E}(v;B_1)+\|f\|^{1/(p-1)}_{L^\infty(B_1^+)}\right)
    \end{align}
    for some constant $c_0=c_0(n,s,p)$. Next, let us choose $x_0=(x_0',x_{0,n})\in B_{1/2}^{+}$ and define $v_0(x)\coloneqq v(x+(x_0',0))$ to see that $v_0$ is a weak solution to
     \begin{equation*}
\left\{
\begin{alignedat}{3}
(-\Delta_p)^s{v_0}&= f_0&&\qquad \mbox{in  $B_{1/4}^{+}$}, \\
{v_0}&=0&&\qquad  \mbox{in $B_{1/4}^{-}$},
\end{alignedat} \right.
\end{equation*}
where $f_0(x)\coloneqq f(x+(x_0',0))$.
We now consider a cut-off function $\psi\in C_c^\infty(B_{3/8})$ with $0 \le \psi \le 1$ and  $\psi\equiv 1$ in $B_{1/4}$. Then we see that $h\coloneqq v_0\psi$ is a weak solution to 
    \begin{equation}\label{heq.dsest}
\left\{
\begin{alignedat}{3}
(-\Delta_p)^s{h}&= g&&\qquad \mbox{in  $B_{1/8}^{+}$}, \\
{h}&=0&&\qquad  \mbox{in $(\bbR^n\setminus B_{3/8})\cup \bbR^{n}_-$},
\end{alignedat} \right.
\end{equation}
where
\begin{align*}
    g(x)\coloneqq &-\int_{\bbR^n\setminus B_{1/4}}\frac{|v_0(x)-v_0(y)|^{p-2}(v_0(x)-v_0(y))}{|x-y|^{n+sp}}\,dy\\
    &\quad+\int_{\bbR^n\setminus B_{1/4}}\frac{|v_0(x)-h(y)|^{p-2}(v_0(x)-h(y))}{|x-y|^{n+sp}}\,dy+f_0(x)\quad\text{in }B^+_{1/8}.
\end{align*}
A simple computation, together with \eqref{ineq1.dsest} and the definition of $v_0$, shows that
\begin{equation}\label{ineq2.dsest}
\begin{aligned}
    \|g\|_{L^\infty(B^+_{1/8})}&\leq c\int_{\bbR^n\setminus B_{1/4}}\frac{|v_0(x)|^{p-1}+|v_0(y)|^{p-1}}{|y|^{n+sp}}\,dy + c\|f_0\|_{L^\infty(B^+_{1/8})} \\
    &\leq c_1\left(\widetilde{E}(v;B_1) +\|f\|_{L^\infty(B_1^+)}^{1/(p-1)}\right)^{p-1}
\end{aligned}
\end{equation}
for some constant $c_1=c_1(n,s,p)$.
By \cite[Lemma 4.3]{IanMosSqu16}, there is a barrier $w\in C^s(\bbR^n)$ such that 
\begin{equation*}
\left\{
\begin{alignedat}{3}
(-\Delta_p)^s&{w}\geq a&&\qquad \mbox{in  $B_{r_0}(e_n)\setminus \overline{B}_1$}, \\
c_2^{-1}(|x|-1)_+^s\leq& w\leq c_2(|x|-1)_+^s&&\qquad  \mbox{in $\bbR^n$}
\end{alignedat} \right.
\end{equation*}
for some $r_0=r_0(n,s,p)>0$, $a=a(n,s,p)\in(0,1)$ and $c_2=c_2(n,s,p)\geq1$. Note that we may assume $r_0\leq 1/8$, as $w$ also weakly solves
\begin{equation*}
    (-\Delta_p)^sw\geq a\quad\text{in }B_{\min\{r_0,1/8\}}(e_n)\setminus \overline{B}_1.
\end{equation*}
Now, for $\widetilde{w}(x)\coloneqq w(x+e_n)$ and any constant $M \ge 1$, we observe that
\begin{equation}\label{eq2.dsest}
\left\{
\begin{alignedat}{3}
(-\Delta_p)^s&{(M\widetilde{w})}\geq M^{p-1}a&&\qquad \mbox{in  $B_{r_0}\setminus \overline{B_{1}(-e_n)}$ }, \\
c_2^{-1}M(|x+e_n|-1)_+^s\leq &M\widetilde{w}\leq c_2M(|x+e_n|-1)_+^s&&\qquad  \mbox{in $\bbR^n$}.
\end{alignedat} \right.
\end{equation}
By taking 
\begin{equation}\label{cond.M.dsest}
    M\coloneqq\left[(c_1/a)^{1/(p-1)}+c_0c_2(4n/r_0)^s\right]\left(\widetilde{E}(v;B_1)+\|f\|^{1/(p-1)}_{L^\infty(B_1^+)}\right)
\end{equation}
and using \eqref{ineq2.dsest}, we have
\begin{equation}\label{comp1.dsest}
    (-\Delta_p)^sh\leq (-\Delta_p)^s(M\widetilde{w})\quad\text{in }B_{r_0}^{+}.
\end{equation}
In light of \eqref{heq.dsest} and \eqref{eq2.dsest}, we observe 
\begin{align}\label{comp2.dsest}
    h=0\leq M\widetilde{w}\quad\text{in }\bbR_-^n\cup (\bbR^n_+\setminus B_{3/8}).
\end{align}
We also observe that, for any $x = (x',x_n)\in \bbR_+^{n}\setminus B_{r_0}^+$, either
\begin{align*}
    |x'|\geq r_0/(4n)\quad\text{or}\quad x_n\geq r_0/(4n)
\end{align*}
holds, which implies
\begin{align}\label{ineq3.dsest}
    c_2^{-1}M(r_0/4n)^s \leq c_2^{-1}M(|x+e_n|-1)_+^s\quad\text{for any }x=(x',x_n)\in \bbR_+^{n}\setminus B_{r_0}^+.
\end{align}
Therefore, using \eqref{ineq1.dsest} together with the fact that $|h(x)|=|(v_0\psi)(x)|\leq |v_0(x)|$, \eqref{cond.M.dsest}, \eqref{eq2.dsest} and \eqref{ineq3.dsest},  we have 
\begin{equation}\label{comp3.dsest}
  h(x)\leq \|  v\|_{L^\infty(B_{7/8})} \leq c_0\left(\widetilde{E}(v;B_1)+\|f\|^{1/(p-1)}_{L^\infty(B_1^+)}\right) \leq c_2^{-1}M(r_0/4n)^s
  \leq M\widetilde{w}(x)
\end{equation}
for any $x\in B_{3/4}^+\setminus B_{r_0}^+$.
Therefore, by the comparison principle given in \cite[Proposition 2.10]{IanMosSqu16}, \eqref{comp1.dsest}, \eqref{comp2.dsest} and \eqref{comp3.dsest} imply
\begin{equation*}
    h\leq M\widetilde{w}\quad\text{in }B_{r_0}^{+},
\end{equation*}
which gives 
\begin{equation*}
    |h(x)|\leq c\left(\widetilde{E}(v;B_1)+\|f\|^{1/(p-1)}_{L^\infty(B_1^+)}\right)(|x+e_n|-1)_+^s\quad\text{in }B_{r_0}^{+}.
\end{equation*}
When $x_{0,n}\leq r_0$, 
since $v_0\equiv h$ in $B_{1/4}$, we have 
\begin{equation*}
|v(x_0',x_{0,n})|=|v_0(0,x_{0,n})|=|h(0,x_{0,n})|\leq c\left(\widetilde{E}(v;B_1)+\|f\|^{1/(p-1)}_{L^\infty(B_1^+)}\right)d_{B^+_{1}}^s(x_0).
\end{equation*}
When instead $x_{0,n}\geq r_0$, since $r_0<1$ depends only on $n,s$ and $p$, we have $x_{0,n}\leq cd_{B^+_1}(x_0)$ and so
\begin{equation*}
    |v(x_0)|=|v(x_0',x_{0,n})|\leq \|v\|_{L^\infty(B_{7/8})} \leq c\left(\widetilde{E}(v;B_1)+\|f\|^{1/(p-1)}_{L^\infty(B_1^+)}\right)d_{B^+_1}^s(x_0)
\end{equation*}
for some constant $c=c(n,s,p)$.
Since we have arbitrarily chosen $x_0\in B^{+}_{1/2}$, we conclude with
\begin{align*}
    \|v/d^s\|_{L^\infty(B^{+}_{1/2})}\leq c\left(\widetilde{E}(v;B_1)+\|f\|^{1/(p-1)}_{L^\infty(B_1^+)}\right)
\end{align*}
for some constant $c=c(n,s,p)$.
\end{proof}

\section{Comparison estimates}
In this section, we deal with a weak solution $u\in W^{s,p}(B_{5r}(z))\cap L^{p-1}_{sp}(\bbR^n)$ to 
    \begin{equation}\label{eq.main.comp}
\left\{
\begin{alignedat}{3}
(-\Delta_p)^s{u}&= f&&\qquad \mbox{in  $\Omega \cap B_{5r}(z)$}, \\
{u}&=0&&\qquad  \mbox{in $B_{5r}(z)\setminus \Omega$},
\end{alignedat} \right.
\end{equation}
where $d_\Omega(z)\leq r$ and $\Omega$ is $(\delta,5r)$-Reifenberg flat with $\delta \in (0,1/2)$ to ensure \eqref{cond.meas}.
%In particular, we give several comparison estimates.
%Before proving other comparison estimates, we recall 
We start with a nonlocal Hardy-type inequality.
\begin{lemma}\label{lem.hardy}
    For $\alpha\in(0,1)$ and $p>1$, assume that $u\in W^{\alpha,p}(B_r(z))$ satisfies $u\equiv 0$ in $B_r(z)\setminus \Omega$.
    Then for any $\rho\in[1/2,1)$, we have
    \begin{align}\label{ineq1.hardy}
        \left(\int_{ B_{\rho r}(z)\cap \Omega}\left|\frac{u}{d_\Omega^\alpha}\right|^p\,dx\right)^{1/p}\leq c[u]_{W^{\alpha,p}(B_r(z))}
    \end{align}
    where $c=c(n,\alpha,p,1/(1-\rho))$. 
\end{lemma}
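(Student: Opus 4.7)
The plan is to prove a standard fractional Hardy-type inequality by exploiting the vanishing of $u$ on $B_r(z)\setminus\Omega$ together with the measure density condition \eqref{cond.meas} that is guaranteed by Reifenberg flatness. After scaling, it suffices to consider $z=0$ and $r=1$. I will split the integration domain into $A_1\coloneqq\{x\in B_\rho\cap\Omega : d_\Omega(x)\geq (1-\rho)/20\}$ and $A_2\coloneqq\{x\in B_\rho\cap\Omega : d_\Omega(x)<(1-\rho)/20\}$ and handle them by distinct mechanisms.

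On $A_1$, the weight $d_\Omega^{-\alpha p}$ is controlled by $c(1-\rho)^{-\alpha p}$, so it suffices to bound $\|u\|_{L^p(B_1)}$ by $[u]_{W^{\alpha,p}(B_1)}$. Since $d_\Omega(0)\leq 1$, the Reifenberg measure density \eqref{cond.meas} at a nearby boundary point yields $|B_1\setminus\Omega|\geq c(n)$, so $u$ vanishes on a set of positive measure of definite size inside $B_1$, and a fractional Poincaré-type inequality (e.g.\ \cite[Corollary 4.9]{Coz17}, already used in the proof of Lemma~\ref{lem.ene}) gives $\|u\|_{L^p(B_1)}\leq c[u]_{W^{\alpha,p}(B_1)}$.

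On $A_2$, I will use the standard "reference set where $u=0$" argument. For $x\in A_2$, pick $\bar{x}\in\partial\Omega$ with $|x-\bar{x}|=d_\Omega(x)$. One readily checks that $\bar{x}\in B_1$ and $B_{d_\Omega(x)}(\bar{x})\subset B_1$, so by \eqref{cond.meas} applied at $\bar{x}$ with radius $d_\Omega(x)\leq 1$, the set $E(x)\coloneqq B_{d_\Omega(x)}(\bar{x})\setminus\Omega$ satisfies $|E(x)|\geq c(n)\,d_\Omega(x)^n$ and $u\equiv 0$ on $E(x)$. Since $|x-y|\leq 2d_\Omega(x)$ for $y\in E(x)$, writing $|u(x)|^p=|u(x)-u(y)|^p$, averaging over $E(x)$ and using the trivial inequality $|x-y|^{n+\alpha p}\leq (2d_\Omega(x))^{n+\alpha p}$, I obtain
\begin{equation*}
  \frac{|u(x)|^p}{d_\Omega(x)^{\alpha p}}
  \leq \frac{(2d_\Omega(x))^{n+\alpha p}}{|E(x)|\,d_\Omega(x)^{\alpha p}}\int_{E(x)}\frac{|u(x)-u(y)|^p}{|x-y|^{n+\alpha p}}\,dy
  \leq c\int_{E(x)}\frac{|u(x)-u(y)|^p}{|x-y|^{n+\alpha p}}\,dy,
\end{equation*}
since the prefactor collapses to a dimensional constant. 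Integrating in $x\in A_2$ and using $E(x)\subset B_1$ gives the bound by $c[u]_{W^{\alpha,p}(B_1)}^p$.

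Combining the two regimes yields the claimed estimate with a constant depending on $n,\alpha,p$ and $1/(1-\rho)$. I do not expect any serious obstacle; the only point requiring mild care is the geometric verification that the reference set $E(x)$ sits inside $B_1$ and has the expected measure, both of which are automatic from the smallness threshold $(1-\rho)/20$ and the measure density \eqref{cond.meas} provided by Reifenberg flatness.
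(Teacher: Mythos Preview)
Your proof is correct in substance and relies on the same two ingredients as the paper --- the ``reference set $E(x)$'' mechanism near the boundary and a fractional Poincar\'e inequality away from it --- but the organization differs. The paper first localizes with a cutoff $\psi\in C_c^\infty(B_{(\rho+3)r/4}(z))$, sets $w=u\psi$, and runs the $E(x)$--argument for $w$ on \emph{all} of $B_{\rho r}(z)\cap\Omega$ without any split: this works because $w$ now vanishes on the whole of $\bbR^n\setminus(\Omega\cap B_r(z))$, so the reference set $E(x)=B_{d_\Omega(x)}(\bar{x})\setminus\Omega$ need not sit inside $B_r(z)$. The Poincar\'e step enters only afterwards, when the paper bounds $[w]_{W^{\alpha,p}(\bbR^n)}$ by $[u]_{W^{\alpha,p}(B_r(z))}$ via a product--rule estimate plus $\|u/r^{\alpha}\|_{L^p(B_r(z))}\le c[u]_{W^{\alpha,p}(B_r(z))}$. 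Your version trades the cutoff and product rule for the near/far split; either way the same Poincar\'e inequality is the bottleneck, so the two proofs are essentially equivalent, yours being a bit more direct.

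One point worth flagging in your $A_1$ step: the claim $|B_1\setminus\Omega|\ge c(n)$ does not follow from $d_\Omega(0)\le1$ and \eqref{cond.meas} alone --- the nearest boundary point to $0$ could lie arbitrarily close to $\partial B_1$ (take $\Omega$ a half-space and place $z$ at distance $1-\epsilon$ from $\partial\Omega$), so the Poincar\'e constant is not uniform under that hypothesis by itself. The paper's own proof has the exact same reliance when it bounds $\|u/r^{\alpha}\|_{L^p}$; in every application of the lemma within the paper the outer ball has radius a fixed multiple larger than the assumed bound on $d_\Omega(z)$, which is what actually makes the Poincar\'e constant uniform. So this is a statement--level imprecision shared with the paper rather than a defect in your strategy.
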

\begin{proof}
    The proof is based on that of \cite[Lemma 2.4]{KimWei24} together with a localization argument. Let $\psi\in C_c^\infty(B_{(\rho+3)r/4}(z))$ be a cut-off function such that $0\le \psi \le 1$, $\psi\equiv 1$ in $B_{(\rho+1)r/2}(z)$ and $|\nabla\psi|\leq \frac{c}{(1-\rho)r}$. We first observe that $w\coloneqq u\psi$ satisfies
    \begin{align}\label{ineq2.hardy}
       \left(\int_{ B_{r/2}(z)\cap \Omega}\left|\frac{w}{d_\Omega^\alpha}\right|^p\,dx\right)^{1/p}\leq c[w]_{W^{\alpha,p}(\bbR^n)}
    \end{align}
    for some constant $c=c(n,\alpha,p)$. Note that the domain $\Omega$ is $(\delta,5r)$-Reifenberg so that \eqref{cond.meas} holds with $R_0=5r$.
   Thus, by following the proof of \cite[Lemma 2.4]{KimWei24} together with the facts that $d_\Omega(x)\leq 2r$ for any $x\in B_{r/2}(z)\cap \Omega$ and that \eqref{cond.meas} holds with $R_0=5r$, we get \eqref{ineq2.hardy}. Since $w\equiv 0$ in $\bbR^n\setminus B_{(\rho+3)r/4}(z)$ and $|w(x)|\leq |u(x)|$, we have
    \begin{align*}
        [w]_{W^{s,p}(\bbR^n)}^{p}\leq [w]_{W^{s,p}(B_{r}(z))}^{p}+c\int_{\bbR^n\setminus B_r(z)}\int_{B_{(\rho+3)r/4}(z)}\frac{|u(x)|^p}{|x-y|^{n+sp}}\,dx\,dy\eqqcolon I_1+I_2.
    \end{align*}
    Using \cite[Lemma 4.7]{Coz17} together with the facts that $0 \le \psi \le 1$ and that $|\nabla \psi|\leq \frac{c}{(1-\rho)r}$, we have
    \begin{equation}\label{ineq3.hardy}
    \begin{aligned}
        I_1&\leq [u]_{W^{s,p}(B_r(z))}^{p}+\int_{B_r(z)}\int_{B_r(z)}\frac{|u(x)|^p\|\nabla\psi\|_{L^\infty}^p}{|x-y|^{n+sp-p}}\,dx\,dy\\
        &\leq [u]_{W^{s,p}(B_r(z))}^{p}+\frac{c}{(1-\rho)^p}\int_{B_r(z)}|u(x)/r^s|^p\,dx\leq \frac{c}{(1-\rho)^p}[u]^p_{W^{s,p}(B_r(z))}
    \end{aligned}
    \end{equation}
    for some $c=c(n,s,p)$. 
    Using \cite[Lemma 4.7]{Coz17} together with the fact that $u\equiv 0$ in $B_r(z)\setminus \Omega$ and \eqref{cond.meas}, we obtain
    \begin{equation}\label{ineq4.hardy}
    \begin{aligned}
        I_2\leq c\int_{\bbR^n\setminus B_r(z)}\int_{B_{(\rho+3)r/4}(z)}\frac{|u(x)|^p}{|z-y|^{n+sp}}\,dx\,dy&\leq \frac{c}{(1-\rho)^p}\int_{B_r(z)}|u(x)/r^s|^p\,dx\\
        &\leq \frac{c}{(1-\rho)^p}[u]^p_{W^{s,p}(B_r(z))}
    \end{aligned}
    \end{equation}
    for some $c=c(n,s,p)$. Thus, \eqref{ineq1.hardy} follows from \eqref{ineq3.hardy},  \eqref{ineq4.hardy} and the fact that $u=w$ in $B_{r/2}(z)$.
\end{proof}
Now we are ready to prove the first comparison estimate.
\begin{lemma}\label{lem.comp0}
    Let $u$ be a weak solution to \eqref{eq.main.comp}. Then there is a weak solution $w\in W^{s,p}(B_{4r}(z))\cap L^{p-1}_{sp}(\bbR^n)$ to 
    \begin{equation}\label{eq.comp0}
\left\{
\begin{alignedat}{3}
(-\Delta_p)^s{w}&= 0&&\qquad \mbox{in  $\Omega \cap B_{4r}(z)$}, \\
{w}&=u&&\qquad  \mbox{in $\bbR^n\setminus (\Omega\cap B_{4r}(z))$}
\end{alignedat} \right.
\end{equation}
such that 
\begin{align*}
    \left(\dashint_{\Omega\cap B_{4r}(z)}\left|\frac{u-w}{d^s}\right|^{{p-1}}\,dx\right)^{\frac1{p-1}}
    &\leq  c\left[\mbox{\Large$\chi$}_{\{p<2\}}\widetilde{E}\left(\frac{u}{r^s};B_{5r}(z)\right)+\left(\dashint_{\Omega\cap B_{5r}(z)}|r^sf|^{p_*}\,dx\right)^{\frac{1}{p_*(p-1)}}\right]^{\frac{2-p}{2}}\\
        &\quad\times \left(\dashint_{\Omega\cap B_{5r}(z)}|r^sf|^{p_*}\,dx\right)^{\frac{p}{2p_*(p-1)}}
\end{align*}
for some $c=c(n,s,p)$, where the constant $p_*$ is given in  \eqref{defn.pstar}.
\end{lemma}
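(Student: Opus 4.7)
The plan is to apply the classical variational comparison scheme for the fractional $p$-Laplace operator: construct $w$ by the direct method, test with $\phi = u-w$, apply the pointwise monotonicity of $a \mapsto |a|^{p-2}a$, and convert the resulting $W^{s,p}$-seminorm estimate of $u-w$ into an $L^{p-1}$-estimate of $(u-w)/d^s$ via the fractional Hardy inequality (Lemma~\ref{lem.hardy}) and a power-mean inequality. First I would obtain $w$ as the unique minimizer on $X_0^{s,p}(\Omega\cap B_{4r}(z))$ of the strictly convex, coercive functional
\begin{equation*}
\phi \longmapsto \iint_{\bbR^{2n}} \frac{|U + \phi(x) - \phi(y)|^p - |U|^p}{|x-y|^{n+sp}}\,dx\,dy,\qquad U \coloneqq u(x) - u(y),
\end{equation*}
which is well-defined because $\phi$ has compact support in $\Omega\cap B_{4r}(z)$; the Euler--Lagrange equation is exactly \eqref{eq.comp0}, and minimality gives the natural comparison $[w]_{W^{s,p}(\bbR^n)} \le c[u]_{W^{s,p}(\bbR^n)}$ on the relevant region. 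Testing \eqref{eq.main.comp} and \eqref{eq.comp0} against $\phi = u-w$ and subtracting yields
\begin{equation*}
\iint_{\bbR^{2n}} \frac{\bigl(|U|^{p-2}U - |W|^{p-2}W\bigr)(U-W)}{|x-y|^{n+sp}}\,dx\,dy \;=\; \int_{\Omega\cap B_{4r}(z)} f\,(u-w)\,dx,\qquad W \coloneqq w(x)-w(y).
\end{equation*}

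For $p\ge 2$, the bound $(|a|^{p-2}a - |b|^{p-2}b)(a-b) \ge c|a-b|^p$ together with Hölder's inequality and the fractional Sobolev embedding $\|u-w\|_{L^{np/(n-sp)}(\bbR^n)} \le c[u-w]_{W^{s,p}(\bbR^n)}$ (valid since $u-w$ is compactly supported) directly gives $[u-w]_{W^{s,p}(\bbR^n)} \le c\|f\|_{L^{p_*}(\Omega\cap B_{5r})}^{1/(p-1)}$; Hardy, the power-mean inequality, and rescaling (using the scaling identity $-1/p + 1/(p_*(p-1)) = s/(n(p-1))$) recover the claimed estimate, the two RHS factors collapsing via $(2-p)/(2p_*(p-1)) + p/(2p_*(p-1)) = 1/(p_*(p-1))$. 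For $p<2$, the sharper pointwise bound $(|a|^{p-2}a - |b|^{p-2}b)(a-b) \ge c(|a|+|b|)^{p-2}|a-b|^2$ combined with Hölder's inequality (exponents $2/p$, $2/(2-p)$) yields the key interpolation
\begin{equation*}
[u-w]_{W^{s,p}(\bbR^n)}^p \;\le\; c\left(\int f(u-w)\,dx\right)^{p/2}\mathcal{E}^{(2-p)/2},\qquad \mathcal{E} \coloneqq \iint\frac{(|U|+|W|)^p}{|x-y|^{n+sp}}\,dx\,dy,
\end{equation*}
where the integration in $\mathcal{E}$ is restricted to the support of $u-w$ (at least one of $x,y \in \Omega\cap B_{4r}(z)$). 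I would control $\mathcal{E}$ by splitting into a local piece on $B_{5r}(z)^2$ (handled via Lemma~\ref{lem.ene} together with the minimality bound $[w]_{W^{s,p}(\bbR^n)} \le c[u]_{W^{s,p}(\bbR^n)}$) and a tail piece (handled by direct computation using $w=u$ outside $\Omega\cap B_{4r}(z)$ and the tail term in $\widetilde{E}$), producing $\mathcal{E}^{1/p} \le c|B_r|^{1/p}\bigl(\widetilde{E}(u/r^s;B_{5r}(z)) + (\dashint_{\Omega\cap B_{5r}(z)}|r^sf|^{p_*})^{1/(p_*(p-1))}\bigr)$.

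Lemma~\ref{lem.hardy} applied to $u-w \in X_0^{s,p}(\Omega\cap B_{4r}(z))$ on the ball $B_{5r}(z)$ yields $\|(u-w)/d^s\|_{L^p(\Omega\cap B_{4r}(z))} \le c[u-w]_{W^{s,p}(\bbR^n)}$, and the power-mean inequality $(\dashint g^{p-1})^{1/(p-1)} \le (\dashint g^p)^{1/p}$ (valid for $0 < p-1 < p$) passes to the $L^{p-1}$-average; scaling back to general $r$ is handled by Lemmas~\ref{lem.scale} and~\ref{lem.rei}. The main obstacle is producing the sharp two-factor structure $\bigl[\mbox{\Large$\chi$}_{\{p<2\}}\widetilde{E} + (\dashint|r^sf|^{p_*})^{1/(p_*(p-1))}\bigr]^{(2-p)/2}(\dashint|r^sf|^{p_*})^{p/(2p_*(p-1))}$ in the subquadratic regime: a naive closure of the monotonicity--Sobolev chain produces only $[u-w]_{W^{s,p}(\bbR^n)} \le c\|f\|_{L^{p_*}}\mathcal{E}^{(2-p)/p}$, corresponding to the weaker additive form $\|f\|\,\widetilde{E}^{\,2-p} + \|f\|^{1/(p-1)}$. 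Recovering the sharper exponents $p/(2(p-1))$ on $\|f\|$ and $(2-p)/2$ on the bracket requires a refined estimate of $\int f(u-w)$ in which $\|u-w\|_{L^{p_*'}}$ is interpolated between the uniform bound from Lemma~\ref{lem.bdd} (giving $\|u\|_\infty + \|w\|_\infty \le c(\widetilde{E} + \|f\|^{1/(p-1)})$) and the $L^{p-1}$ Hardy bound on $(u-w)/d^s$ itself, leading to a self-referential inequality that closes for $\|(u-w)/d^s\|_{L^{p-1}}$ with the desired sharp exponents; tracking this exponent bookkeeping together with the local/tail decomposition of $\mathcal{E}$ constitutes the main technical effort.
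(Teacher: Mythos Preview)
Your overall strategy matches the paper's line by line: construct $w$ by minimization, test with $u-w$, invoke the monotonicity inequality $(|a|^{p-2}a-|b|^{p-2}b)(a-b)\ge c(|a|+|b|)^{p-2}|a-b|^2$, for $p<2$ apply the H\"older splitting
\[
\iint_{B_{9/2}\times B_{9/2}}|d_s(u-w)|^p \;\le\; A^{p/2}\,\mathcal{E}^{(2-p)/2},\qquad A\coloneqq\iint(|d_su|+|d_sw|)^{p-2}|d_s(u-w)|^2,
\]
bound $\mathcal{E}$ via Lemma~\ref{lem.ene} together with the energy comparison for $w$ (the paper tests $w-u$ in the $w$-equation for this), and finish with Lemma~\ref{lem.hardy} and the power-mean inequality to pass from $L^p$ to $L^{p-1}$.

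The one substantive divergence is the subquadratic closure. The paper does \emph{not} interpolate against $L^\infty$ bounds from Lemma~\ref{lem.bdd}. Instead it first asserts \eqref{ineq1.comp0},
\[
A\;\le\;c\Bigl(\int_{\Omega\cap B_4}|f|^{p_*}\,dx\Bigr)^{p/(p_*(p-1))},
\]
with $c=c(n,s,p)$ carrying no $\widetilde{E}$-dependence, saying only that this follows ``by testing''. This bound on $A$ is then inserted directly into the H\"older splitting together with the energy bound on $\mathcal{E}$, and that combination is exactly what produces the exponents $(2-p)/2$ and $p/(2(p-1))$ in the statement. For $p\ge2$ the absorption behind \eqref{ineq1.comp0} is the standard one ($A\ge c[u-w]_{W^{s,p}}^p$, Sobolev, reabsorb); for $p<2$ the paper gives no further detail, and the straightforward testing chain yields precisely your ``naive'' bound $A\le c\|f\|_{L^{p_*}}^2\,\mathcal{E}^{(2-p)/p}$ rather than \eqref{ineq1.comp0}. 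So the obstacle you flag is genuine, but your proposed self-referential $L^\infty$--Hardy interpolation is not the paper's route and would not obviously close with the claimed exponents either: the paper simply takes \eqref{ineq1.comp0} in one stroke and proceeds.
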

\begin{proof}
Note that the existence of a weak solution $w$ to \eqref{eq.comp0} follows from \cite[Proposition 2.12]{BraLinSch18}. 
By Lemma \ref{lem.scale} and Lemma \ref{lem.rei}, we may assume $z=0$ and $r=1$.
By testing $u-w$ into 
    \begin{equation*}
        (-\Delta_p)^su-(-\Delta_p)^sw=f\quad\text{in }\Omega\cap B_4,
    \end{equation*}
    we derive
    \begin{align}\label{ineq1.comp0}
        \iint_{\bbR^{2n}}(|d_su|+|d_sw|)^{p-2}|d_s(u-w)|^2\frac{\,dx\,dy}{|x-y|^n}\leq c\left(\int_{\Omega\cap B_{4}}|f|^{p_*}\,dx\right)^{\frac{p}{p_*(p-1)}}
    \end{align}
    for some $c=c(n,s,p)$. When $p\geq2$, \eqref{ineq1.comp0} together with \cite[Lemma 2.1]{DieKimLeeNow24} directly implies 
    \begin{equation}\label{ineq4.comp0}
    \begin{aligned}
    & \iint_{B_{9/2}\times B_{9/2}}|d_s(u-w)|^p\frac{\,dx\,dy}{|x-y|^n} \\
         &\leq c\left[\mbox{\Large$\chi$}_{\{p<2\}}\widetilde{E}(u;B_5)+\left(\dashint_{\Omega\cap B_{5}}|f|^{p_*}\,dx\right)^{\frac{1}{p_*(p-1)}}\right]^{\frac{p(2-p)}{2}}\left(\dashint_{\Omega\cap B_{5}}|f|^{p_*}\,dx\right)^{\frac{p^2}{2p_*(p-1)}}
    \end{aligned}
    \end{equation}
    for some $c=c(n,s,p)$. We now prove \eqref{ineq4.comp0} when $1<p<2$. 
    To do this, by testing $w-u$ into \eqref{eq.comp0}, we derive
    \begin{align*}
        \iint_{B_{9/2}\times B_{9/2}}|d_sw|^p\frac{\,dx\,dy}{|x-y|^n}\leq c\iint_{(B_{9/2}\times B_{9/2})\cup\mathscr{C}\Omega_{9/2}}|d_su|^p\frac{\,dx\,dy}{|x-y|^n}
    \end{align*}
    for some constant $c=c(n,s,p)$, where $\Omega_{9/2}\coloneqq\Omega\cap B_{9/2}$ and $\mathscr{C}\Omega_{9/2}\coloneqq(\bbR^n\setminus \Omega_{9/2}\times \bbR^n\setminus \Omega_{9/2})^c$. 
    Since 
    \begin{equation}\label{ineq11.comp0}
    \begin{aligned}
        \iint_{(B_{9/2}\times B_{9/2})\cup\mathscr{C}\Omega_{9/2}}|d_su|^p\frac{\,dx\,dy}{|x-y|^n}&\leq c\iint_{B_{19/4}\times B_{19/4}}|d_su|^p\frac{\,dx\,dy}{|x-y|^n} +c\mathrm{Tail}(u;B_{19/4})^p,
    \end{aligned}
    \end{equation}
    we further estimate
    \begin{align}\label{ineq2.comp0}
        \iint_{B_{9/2}\times B_{9/2}}|d_sw|^p\frac{\,dx\,dy}{|x-y|^n}\leq c\left[\widetilde{E}(u;B_5)+\left(\dashint_{\Omega\cap B_{5}}|f|^{p_*}\,dx\right)^{\frac{1}{p_*(p-1)}}\right]^p,
    \end{align}
    where we have used Lemma \ref{lem.ene}.
    In light of H\"older's inequality, we next obtain
    \begin{equation}\label{ineq3.comp0}
    \begin{aligned}
        &\iint_{B_{9/2}\times B_{9/2}}|d_s(u-w)|^p\frac{\,dx\,dy}{|x-y|^n}\\
        &=\iint_{B_{9/2}\times B_{9/2}}(|d_su|+|d_sw|)^{-p(p-2)/2}(|d_su|+|d_sw|)^{p(p-2)/2}|d_s(u-w)|^p\frac{\,dx\,dy}{|x-y|^n}\\
        &\leq \left(\iint_{B_{9/2}\times B_{9/2}}(|d_su|+|d_sw|)^{p}\frac{\,dx\,dy}{|x-y|^n}\right)^{(2-p)/2}\\ &\qquad\times\left(\iint_{B_{9/2}\times B_{9/2}}(|d_su|+|d_sw|)^{p-2}|d_s(u-v)|^2\frac{\,dx\,dy}{|x-y|^n}\right)^{p/2}.
    \end{aligned}
    \end{equation}
    We now plug \eqref{ineq1.comp0}, \eqref{ineq11.comp0} and \eqref{ineq2.comp0} into the right-hand side of \eqref{ineq3.comp0}, in order to obtain \eqref{ineq4.comp0}. 
    In addition, by H\"older's inequality and Lemma \ref{lem.hardy}, we have
    \begin{align*}
         \left(\dashint_{\Omega\cap B_4}|(u-w)/d^s|^{p-1}\,dx\right)^{1/(p-1)}&\leq \left(\dashint_{\Omega\cap B_4}|(u-w)/d^s|^{p}\,dx\right)^{1/p}\leq c[u-w]_{W^{s,p}(B_{9/2})}.
    \end{align*}
    By plugging this into the left-hand side of \eqref{ineq4.comp0}, we conclude with the desired estimate.
\end{proof}
Since we are going to investigate the function $u/d^{s}_{\Omega}$, we also consider the functional
\begin{equation*}
    \mathcal{E}_\Omega(w;B_{r}(x_0))\coloneqq \left(\dashint_{\Omega\cap B_r(x_0)}|w/d_\Omega^s|^{{p-1}}\,dx\right)^{1/(p-1)}+\max\{d_\Omega(x_0),r\}^{-s}\mathrm{Tail}(w;B_r(x_0)).
\end{equation*}
Note that such a functional is first introduced in \cite{KimWei24} when $p=2$. By considering Lemma \ref{lem.scale} and Lemma \ref{lem.rei}, we deduce 
\begin{align*}
    \mathcal{E}_{\Omega_{r,x_0}}(u_{r,x_0};B_{1})=\mathcal{E}_{\Omega}(u;B_r(x_0)),
\end{align*}
where the function $u_{r,x_0}(x)\coloneqq u(rx+x_0)/r^s$ and the domain $\Omega_{r,x_0}$ are determined in Lemma \ref{lem.scale}.  Moreover, for any $\rho \in (0,1)$, we have
\begin{align}\label{ineq.ftnl}
    \mathcal{E}_\Omega(w;B_{\rho r}(x_0))\leq c(n,s,p,\rho)\mathcal{E}_\Omega(w;B_{ r}(x_0)).
\end{align}
Before presenting the second comparison estimate, we note the following lemma.
\begin{lemma}\label{lem.C0}
    Let $w\in W^{s,p}(B_{4r}(z))\cap L^{p-1}_{sp}(\bbR^n)$ be a weak solution to 
    \begin{equation*}
\left\{
\begin{alignedat}{3}
(-\Delta_p)^s{w}&= 0&&\qquad \mbox{in  $\Omega \cap B_{4r}(z)$}, \\
{w}&=0&&\qquad  \mbox{in $B_{4r}(z)\setminus \Omega$}
\end{alignedat} \right.
\end{equation*}
with 
\begin{equation}\label{ineq0.C0}
    \mathcal{E}_\Omega(w;B_{4r}(z))\leq \lambda
\end{equation} 
for some $\lambda>0$.
In addition, assume that $\Omega$ satisfies \eqref{cond.meas} with $R_0=4r$.
Let us fix a cut-off function $\xi\in C_c^\infty(B_{11r/4}(z))$ with $0 \le \xi \le 1$, $\xi\equiv 1$ in $B_{5r/2}(z)$ and $|\nabla\xi|\leq 8/r$. Then $v\coloneqq w\xi$ is a weak solution to
 \begin{equation}\label{eq.c0.seq}
\left\{
\begin{alignedat}{3}
(-\Delta_p)^s{v}&= g&&\qquad \mbox{in  $\Omega \cap B_{2r}(z)$}, \\
{v}&=0&&\qquad  \mbox{in $B_{2r}(z)\setminus \Omega$}
\end{alignedat} \right.
\end{equation}
for some $g\in L^\infty(B_r(z))$. 
Moreover, there is a constant $c_3=c_3(n,s,p)\geq1$ such that
\begin{equation*}
    \left(\int_{B_{3r/2}(z)}\dashint_{B_{3r/2}(z)}|d_sv|^p\frac{\,dx\,dy}{|x-y|^n}\right)^{1/p}+\mathcal{E}_\Omega(v;B_{r}(z))+\left\|r^{s}g\right\|^{1/(p-1)}_{L^\infty(B_{2r}(z))}\leq c_3\lambda.
\end{equation*}
\begin{proof}
   By Lemma \ref{lem.locarg}, with $u$ replaced by $w$, we observe $v=w\xi$ is a weak solution to \eqref{eq.c0.seq} with 
    \begin{align}\label{ineq1.C0}
        \|g\|_{L^\infty(B_{2r}(z))}\leq cr^{-sp}\left(\|w\|^{p-1}_{L^\infty(B_{2r}(z))}+\mathrm{Tail}(w;B_{5r/2}(z))^{p-1}\right)
    \end{align}
    for some constant $c=c(n,s,p)$. By Lemma \ref{lem.ene} and Lemma \ref{lem.bdd}, we have 
    \begin{align}\label{ineq2.C0}
        r^{s-n/p}[w]_{W^{s,p}(B_{2r}(z))}+\|w\|_{L^\infty(B_{2r}(z))}\leq c\widetilde{E}(w;B_{4r}(z)).
    \end{align}
    Since $d_{\Omega}(x)\leq 5r$ for any $x\in B_{4r}(z)$, we have
    \begin{align}\label{ineq3.C0}
        \widetilde{E}(w;B_{4r}(z))+\mathrm{Tail}(w;B_{5r/2}(z))\leq cr^s\mathcal{E}_\Omega(w;B_{4r}(z))
    \end{align}
    for some constant $c=c(n,s,p)$. Combining all the estimates \eqref{ineq1.C0}, \eqref{ineq2.C0} and \eqref{ineq3.C0} together with \eqref{ineq0.C0} yields 
    \begin{align}\label{ineq4.C0}
       r^{-n/p}[v]_{W^{s,p}(B_{3r/2}(z))} +\|r^sg\|^{1/(p-1)}_{L^\infty(B_{2r}(z))}\leq c\lambda
    \end{align}
    for some constant $c=c(n,s,p)$. In addition, a few simple computations lead to
    \begin{align*}
        \mathcal{E}_\Omega(v;B_{r}(z))\leq c\mathcal{E}_\Omega(w;B_{4r}(z))\leq c\lambda.
    \end{align*}
    Using this and \eqref{ineq4.C0}, we get the desired estimate.
\end{proof}
\end{lemma}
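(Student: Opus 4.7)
My plan is to take Lemma \ref{lem.locarg} as the starting point. Applied to $w$ (with $f\equiv 0$ and $R=r$), it immediately identifies $v=w\xi$ as a weak solution to \eqref{eq.c0.seq} with $g$ of the form \eqref{mu.locarg}, together with the pointwise bound
\begin{equation*}
    |g(x)|\le cr^{-sp}\bigl(|w(x)|^{p-1}+\mathrm{Tail}(w;B_{5r/2}(z))^{p-1}\bigr),\quad x\in B_{2r}(z).
\end{equation*}
The whole estimate thus reduces to controlling $\|w\|_{L^\infty(B_{2r}(z))}$, $[w]_{W^{s,p}(B_{3r/2}(z))}$, and $\mathrm{Tail}(w;B_{5r/2}(z))$ in terms of $\lambda=\mathcal{E}_\Omega(w;B_{4r}(z))$.

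For the $L^\infty$ and Gagliardo controls I call on the boundary sup and energy estimates with $f\equiv 0$: Lemma \ref{lem.bdd} applied with ambient ball $B_{16r/7}(z)\subset B_{4r}(z)$ (whose $7/8$-concentric is exactly $B_{2r}(z)$) gives $\|w\|_{L^\infty(B_{2r}(z))}\le c\widetilde{E}(w;B_{16r/7}(z))$, and Lemma \ref{lem.ene} applied with ambient ball $B_{8r/3}(z)$ gives $r^{s-n/p}[w]_{W^{s,p}(B_{2r}(z))}\le c\widetilde{E}(w;B_{8r/3}(z))$; both applications are legal because the measure-density condition holds at scale $R_0=4r$. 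To convert $\widetilde{E}$ and the tail into $\mathcal{E}_\Omega$ I use that $d_\Omega(z)\le r$ and hence $d_\Omega\le 5r$ on $B_{4r}(z)\cap\Omega$, which yields
\begin{equation*}
    \left(\dashint_{B_{4r}(z)\cap\Omega}|w|^{p-1}\,dx\right)^{\frac{1}{p-1}}\le (5r)^s\left(\dashint_{B_{4r}(z)\cap\Omega}|w/d_\Omega^s|^{p-1}\,dx\right)^{\frac{1}{p-1}}\le cr^s\lambda,
\end{equation*}
while splitting the integral defining $\mathrm{Tail}(w;B_{5r/2}(z))$ into the annulus $B_{4r}(z)\setminus B_{5r/2}(z)$ (absorbed into the mean-value term above via $|y-z|\gtrsim r$) and the exterior of $B_{4r}(z)$ (absorbed into $\mathrm{Tail}(w;B_{4r}(z))^{p-1}$) produces $\mathrm{Tail}(w;B_{5r/2}(z))\le cr^s\lambda$. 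Inserting these into the $|g|$-bound yields $\|r^{sp}g\|_{L^\infty(B_{2r}(z))}\le c(r^s\lambda)^{p-1}$, i.e.\ $\|r^sg\|_{L^\infty}^{1/(p-1)}\le c\lambda$.

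For the remaining two pieces of the estimate I exploit the identity $v\equiv w$ on $B_{5r/2}(z)\supset B_{3r/2}(z)$: the Gagliardo quantity in the statement coincides with $|B_{3r/2}|^{-1/p}[w]_{W^{s,p}(B_{3r/2}(z))}$, which is bounded by $c\lambda$ after combining the Gagliardo bound above with the conversion $\widetilde{E}(w;B_{8r/3}(z))\le cr^s\lambda$; and $\mathcal{E}_\Omega(v;B_r(z))\le\mathcal{E}_\Omega(w;B_r(z))\le c\mathcal{E}_\Omega(w;B_{4r}(z))\le c\lambda$ follows from the scale-invariance \eqref{ineq.ftnl} together with the pointwise bound $|v|\le|w|$ controlling the tail. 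I expect the only mildly subtle step to be the tail conversion $\mathrm{Tail}(w;B_{5r/2}(z))\lesssim r^s\lambda$, but the annulus/exterior split above reduces it to a direct computation.
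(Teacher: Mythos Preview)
Your proposal is correct and follows essentially the same route as the paper: invoke Lemma~\ref{lem.locarg} for the equation and the pointwise bound on $g$, combine Lemmas~\ref{lem.ene} and~\ref{lem.bdd} with the observation $d_\Omega\le 5r$ on $B_{4r}(z)$ (coming from the standing assumption $d_\Omega(z)\le r$) to convert $\widetilde{E}$ and tails into $r^s\mathcal{E}_\Omega(w;B_{4r}(z))\le r^s\lambda$, and finish using $v\equiv w$ on $B_{5r/2}(z)$ together with \eqref{ineq.ftnl}. Your annulus/exterior split for $\mathrm{Tail}(w;B_{5r/2}(z))$ and the explicit radii $16r/7$, $8r/3$ are exactly what the paper compresses into the single line~\eqref{ineq3.C0}.
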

As in the proof of Lemma \ref{lem.hardy}, we find that there is a constant $c_4=c_4(n,s,p) \ge 1$ such that for any $u\in W^{s,p}(B_{2r})$ with $u\equiv 0$ in $B_{2r}^{-}$, there holds
\begin{equation}\label{defn.c1}
    \left(\dashint_{B_r^+}|u/d^s_{B_{2r}^+}|^p\,dx\right)^{1/p}\leq c_4\left(\dashint_{B_{3r/2}}\int_{B_{3r/2}}|d_su|^p\frac{\,dx\,dy}{|x-y|^n}\right)^{1/p}.
\end{equation}
Recalling the constant $c_3=c_3(n,s,p)$ determined in Lemma \ref{lem.C0}, here we fix the constant $C_0 = C_0 (n,s,p) \ge 1$ as
\begin{equation}\label{defn.C0}
    C_0=5c_3c_4.
\end{equation}
With this constant $C_0=C_0(n,s,p)$, we are now ready to prove the second comparison estimate.
\begin{lemma}\label{lem.comprei}
    For any $\epsilon>0$, there is a constant $\delta=\delta(n,s,p,\epsilon)>0$ such that for any weak solution $w\in W^{s,p}(B_{4r}(z))\cap L^{p-1}_{sp}(\bbR^n)$ to 
    \begin{equation*}
\left\{
\begin{alignedat}{3}
(-\Delta_p)^s{w}&= 0&&\qquad \mbox{in  $\Omega \cap B_{4r}(z)$}, \\
{w}&=0&&\qquad  \mbox{in $B_{4r}(z)\setminus \Omega$},
\end{alignedat} \right.
\end{equation*}
where $\Omega$ is $(\delta,5r)$-Reifenberg flat with
\begin{equation*}
    B^+_{4r}(z)\subset B_{4r}(z)\cap\Omega\subset B_{4r}(z)\cap\{x_n>z_{n}-4r\delta\}
\end{equation*}
and
\begin{equation*}
   \mathcal{E}_\Omega(w;B_{4r}(z))\leq \lambda,
\end{equation*}
there is a weak solution $v\in W^{s,p}(B_{2r}(z))\cap L^{p-1}_{sp}(\bbR^n)$ to 
\begin{equation}\label{eq.contv}
\left\{
\begin{alignedat}{3}
(-\Delta_p)^s{v}&= g&&\qquad \mbox{in  $B^+_{2r}(z)$}, \\
{v}&=0&&\qquad  \mbox{in $B^-_{2r}(z)$}
\end{alignedat} \right.
\end{equation}
such that
\begin{equation}\label{eq.est.v.lambda}
\mathcal{E}_{B_{2r}^+(z)}(v;B_r(z))+\|r^sg\|^{1/(p-1)}_{L^\infty(B_{2r}(z))}\leq C_0\lambda
\end{equation}
and
\begin{equation*}
   \left( \dashint_{B_{r/2}^+(z)}|w/d_\Omega^s-v/d_\Omega^s|^{{p-1}}\,dx\right)^{1/(p-1)}\leq \epsilon\lambda,
\end{equation*}
where the constant $C_0=C_0(n,s,p)$ is determined in \eqref{defn.C0}.
\end{lemma}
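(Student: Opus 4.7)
The plan is to argue by contradiction via compactness, in the spirit of the technique sketched in Section 1. Suppose the claim fails: there exist $\epsilon_0 > 0$, a sequence $\delta_i \downarrow 0$, domains $\Omega_i$ that are $(\delta_i, 5r)$-Reifenberg flat with the sandwich $B_{4r}^+(z) \subset \Omega_i \cap B_{4r}(z) \subset \{x_n > z_n - 4r\delta_i\}$, and weak solutions $w_i$ of the homogeneous equation with $\mathcal{E}_{\Omega_i}(w_i; B_{4r}(z)) \leq \lambda$, such that for each $i$ no admissible $v$ satisfies both \eqref{eq.est.v.lambda} and the $L^{p-1}$ comparison bound with $\epsilon_0$ in place of $\epsilon$. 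Since the equation is homogeneous under $w \mapsto w/\lambda$, and using Lemma \ref{lem.scale} and Lemma \ref{lem.rei}, I first normalize to $z = 0$, $r = 1$, and $\lambda = 1$.

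I next apply the localization of Lemma \ref{lem.C0}: with the cut-off $\xi \in C_c^\infty(B_{11/4})$ and $V_i := w_i \xi$, each $V_i$ is a weak solution of $(-\Delta_p)^s V_i = g_i$ in $\Omega_i \cap B_2$ with $V_i \equiv 0$ in $B_2 \setminus \Omega_i$, and $[V_i]_{W^{s,p}(B_{3/2})} + \mathcal{E}_{\Omega_i}(V_i; B_1) + \|g_i\|^{1/(p-1)}_{L^\infty(B_2)} \leq c_3$. Using the compact embedding of $W^{s,p}$ into $L^p$ together with the uniform $W^{s,p}$ bound (which can be upgraded to a bound on all of $\bbR^n$ by the compact support of $V_i$ in $\overline{B_{11/4}}$), I extract a subsequence with $V_i \to V_\infty$ strongly in $L^p(\bbR^n)$ and pointwise a.e., weakly in $W^{s,p}(\bbR^n)$, together with $g_i \rightharpoonup^\ast g_\infty$ weakly-$\ast$ in $L^\infty(B_2)$. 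The sandwich forces $V_i \equiv 0$ on $\{x_n \le -4\delta_i\} \cap B_4$, so $V_\infty \equiv 0$ on $B_2^-$. I then pass to the limit in the nonlinear weak formulation via a Minty-type monotonicity argument combined with Vitali's convergence theorem applied to the integrand $|d_s V_i|^{p-2} d_s V_i \cdot d_s \phi$, thereby identifying $V_\infty$ as a weak solution of \eqref{eq.contv} with $g = g_\infty$. Lower semicontinuity of the Gagliardo seminorm, the half-space Hardy inequality \eqref{defn.c1} (providing the constant $c_4$), and the compact support of $V_\infty$ together furnish $\mathcal{E}_{B_2^+}(V_\infty; B_1) + \|g_\infty\|^{1/(p-1)}_{L^\infty(B_2)} \leq C_0$; hence $v := V_\infty$ is an admissible candidate for \eqref{eq.est.v.lambda}.

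It remains to prove that $\|(w_i - V_\infty)/d_{\Omega_i}^s\|_{L^{p-1}(B_{1/2}^+)} \to 0$, which contradicts the standing hypothesis. Because $\xi \equiv 1$ in $B_{5/2}$, we have $w_i = V_i$ on $B_{1/2}^+$. The sandwich inclusion $\Omega_i^c \cap B_1 \subset B_1^-$ shows that both $V_i$ and $V_\infty$ vanish on $B_1 \setminus \Omega_i$ (the former by construction, the latter because $V_\infty \equiv 0$ on $B_2^- \supset B_1^-$), so Lemma \ref{lem.hardy} applies separately to $V_i$ and to $V_\infty$, giving uniform bounds on $\|V_i/d_{\Omega_i}^s\|_{L^p(B_{1/2} \cap \Omega_i)}$ and $\|V_\infty/d_{\Omega_i}^s\|_{L^p(B_{1/2} \cap \Omega_i)}$. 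I split $B_{1/2}^+$ into the interior set $A_\eta := \{x \in B_{1/2}^+ : d_{B_2^+}(x) > \eta\}$ and the boundary layer $A_\eta^c$. On $A_\eta$, once $\delta_i \ll \eta$ the Reifenberg sandwich yields $d_{\Omega_i} \geq \eta/2$ uniformly, and strong $L^p$ convergence $V_i \to V_\infty$ implies $\|(V_i - V_\infty)/d_{\Omega_i}^s\|_{L^{p-1}(A_\eta)} \to 0$. On $A_\eta^c$, H\"older's inequality combined with the uniform $L^p$ quotient bounds yields $\|(V_i - V_\infty)/d_{\Omega_i}^s\|_{L^{p-1}(A_\eta^c)} \leq c|A_\eta^c|^{1/p}$, which vanishes as $\eta \to 0$ since $\partial B_2^+$ has zero Lebesgue measure in $\bbR^n$. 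Sending first $i \to \infty$ and then $\eta \to 0$ completes the contradiction.

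The principal obstacle is the passage to the limit in the nonlinear weak formulation for $(-\Delta_p)^s V_i$, since strong $W^{s,p}$ convergence is not immediate from the uniform energy bounds. I expect this to require a Minty-type argument exploiting monotonicity of the fractional $p$-Laplace operator together with careful use of pointwise a.e.\ convergence and uniform integrability via Vitali's theorem. A secondary delicate point is verifying that the various Hardy-based bounds depend only on the sandwich geometry and the measure density condition \eqref{cond.meas}, hence remain uniform in $i$.
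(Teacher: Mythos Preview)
Your argument follows the paper's architecture closely: normalization, contradiction, localization via Lemma~\ref{lem.C0}, extraction of weak/strong/a.e.\ limits, identification of $V_\infty$ as a flat-boundary solution, and verification of \eqref{eq.est.v.lambda} via Fatou and the Hardy constant $c_4$. The proofs diverge only in the final step $\|(w_i - V_\infty)/d_{\Omega_i}^s\|_{L^{p-1}(B_{1/2}^+)} \to 0$. The paper avoids any splitting: with $\sigma := s - \tfrac{s}{2(p-1)p}$ it uses H\"older (exploiting $d_{\Omega_i}^{-s/2} \le x_n^{-s/2} \in L^1(B_{1/2}^+)$) to trade $d_{\Omega_i}^{-s}$ in $L^{p-1}$ for $d_{\Omega_i}^{-\sigma}$ in $L^p$, then Lemma~\ref{lem.hardy} at level $\sigma$ bounds this by $[V_i - V_\infty]_{W^{\sigma,p}(B_1)}$, and finally the Brezis--Mironescu interpolation $[\,\cdot\,]_{W^{\sigma,p}} \le c[\,\cdot\,]_{W^{s,p}}^{\sigma/s}\|\cdot\|_{L^p}^{(s-\sigma)/s}$ converts the uniform energy bound plus strong $L^p$ convergence into the desired smallness. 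Your interior/boundary-layer decomposition is more elementary (it dispenses with the interpolation lemma) and equally valid; note that the sandwich $B_4^+ \subset \Omega_i$ already forces $d_{\Omega_i}(x) \ge x_n$ on all of $B_{1/2}^+$, independently of $\delta_i$, which slightly simplifies your interior step.

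On the nonlinear limit passage: the paper is terse, citing only weak $W^{s,p}$ convergence, but what actually carries the argument is that a.e.\ convergence of $v_k$ yields a.e.\ convergence of $d_s v_k(x,y)$, whence $|d_s v_k|^{p-2} d_s v_k \rightharpoonup |d_s v_\infty|^{p-2} d_s v_\infty$ weakly in $L^{p'}$ by pointwise convergence combined with $L^{p'}$-boundedness. Your Vitali-based description captures this mechanism. A full Minty argument is unnecessary here and would in fact be awkward, since $v_k$ itself is not an admissible test function for its own equation on $\Omega_k \cap B_2$.
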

\begin{proof}
By Lemma \ref{lem.scale} and Lemma \ref{lem.rei}, it suffices to consider the case that $r=1$, $z=0$ and $\lambda=1$.

    We prove the lemma by contradiction. Suppose that there is  a constant $\epsilon_0>0$, a sequence of domains $\{\Omega_k\}$ and a sequence of weak solutions $\{w_k\}\subset W^{s,p}(B_4)\cap L^{p-1}_{sp}(\bbR^n)$ to 
     \begin{equation*}
\left\{
\begin{alignedat}{3}
(-\Delta_p)^s{w}_k&= 0&&\qquad \mbox{in  $\Omega_k \cap B_{4}$}, \\
{w}_k&=0&&\qquad  \mbox{in $B_{4}\setminus \Omega_k$},
\end{alignedat} \right.
\end{equation*}
where $\Omega_k$ is $(1/k,4)$-Reifenberg flat with
\begin{equation}\label{cond}
    B_{4}^+\subset B_{4}\cap\Omega_k\subset B_{4}\cap\{x_n>-4/k\}
\end{equation}
and
\begin{equation}\label{ass1.comprei}
    \mathcal{E}_{\Omega_k}(w_k;B_{4})\leq 1
\end{equation}
such that  
\begin{equation}\label{contr.comprei}
    \left( \dashint_{B^+_{1/2}}|w_k/d_k^s-v/d_k^s|^{{p-1}}\,dx\right)^{1/(p-1)}> \epsilon_0
\end{equation}
for any weak solution $v$ to \eqref{eq.contv} with $r=1$, $z=0$ and $\lambda=1$. Here we have assumed $k\geq2$ and have written 
\begin{equation*}
    d_k(x)\coloneqq \mathrm{dist}(x,\bbR^n\setminus \Omega_k).
\end{equation*}
First, note that $\Omega_k$ satisfies \eqref{cond.meas} with $R_0=5$, as $\Omega_k$ is $(1/k,5)$-Reifenberg flat with $1/k\leq 1/2$.
Let us take a cut-off function $\xi\in C_c^\infty(B_{11/4})$ with $0 \le \xi \le 1$, $\xi\equiv 1$ in $B_{5/2}$ and $|\nabla \xi|\leq 8$. Then by Lemma \ref{lem.C0}, we see that $v_k\coloneqq w_k\xi$ is a weak solution to
 \begin{equation*}
\left\{
\begin{alignedat}{3}
(-\Delta_p)^s{v}_k&= g_k&&\qquad \mbox{in  $\Omega_k \cap B_{2}$}, \\
{v}_k&=0&&\qquad  \mbox{in $B_{2}\setminus \Omega_k$}
\end{alignedat} \right.
\end{equation*}
with
\begin{equation}\label{ineq00.comprei}    \left(\int_{B_{3/2}}\dashint_{B_{3/2}}|d_sv_k|^p\frac{\,dx\,dy}{|x-y|^n}\right)^{1/p}+\mathcal{E}_{\Omega_k}(v_k;B_{1})+\|g_k\|_{L^\infty(B_2)}\leq c_3,
\end{equation}
where the constant $c_3=c_3(n,s,p)$ is determined in Lemma \ref{lem.C0}. Now we want to prove 
\begin{equation*}
    \|v_k\|_{W^{s,p}(\bbR^n)}\leq c.
\end{equation*}
By Lemma \ref{lem.ene}, we get
\begin{align*}
    \dashint_{B_{3}}\int_{B_{3}}|d_s w_k|^p\frac{\,dx\,dy}{|x-y|^{n}}&\leq c\left(\dashint_{\Omega\cap B_{7/2}}|w_k|^{{p-1}}\,dx\right)^{p/(p-1)}+c\mathrm{Tail}(w_k;B_{7/2})^p
\end{align*}
for some constant $c=c(n,s,p)$. Since $d_k(x)\leq 8$ for any $x\in \Omega\cap B_{4}$, we derive
\begin{align*}
    \|w_k\|_{W^{s,p}(B_3)}\leq c\mathcal{E}_{\Omega_k}(w_k;B_{7/2})
\end{align*}
and so
\begin{align}\label{ineqk2.comprei}
    \|v_k\|_{W^{s,p}(\bbR^n)}\leq c\|w_k\|_{W^{s,p}(B_{3})}\leq c\mathcal{E}_{\Omega_k}(w;B_4)\leq c,
\end{align}
where we have used \eqref{ass1.comprei} for the last inequality. 
Therefore, by the weak compactness argument given in \cite[Theorem 7.1]{DinPalVal12}, there is a function $v_\infty\in W^{s,p}(\bbR^n)\subset W^{s,p}(B_{2})\cap L^{p-1}_{sp}(\bbR^n)$ such that
\begin{align}\label{ineqk3.comprei}
   v_k\rightharpoonup v_\infty\quad\text{in }W^{s,p}(\bbR^n),\quad v_k\to v_\infty\quad\text{in }L_{\mathrm{loc}}^p(\bbR^n)
\end{align}
and
\begin{align}\label{ineqk33.comprei}
    \lim_{k\to\infty}v_k(x)=v_\infty(x)\quad\text{for almost every }x\in \bbR^n.
\end{align}
Moreover, by the weak $*$-compactness, there exists a function $g_{\infty} \in L^{\infty}(B_2)$ such that
\begin{align}\label{ineq333.comprei}
    \lim_{k\to\infty}\int_{B_2}g_k\psi \,dx =\int_{B_2}g_\infty\psi \,dx
\end{align}
for any $\psi\in L^1(B_2)$, and that
\begin{equation}\label{ineq3333.comprei}
    \|g_\infty\|_{L^\infty(B_2)}\leq \liminf_{k}\|g_k\|_{L^\infty(B_2)}.
\end{equation}
Using \eqref{ineqk33.comprei} and \eqref{ineq00.comprei} together with Fatou's lemma, we obtain
\begin{equation}\label{ineq000.comprei}
\begin{aligned}
    &\mathcal{E}_{B_2^+}(v_\infty;B_1)+\|g_\infty\|_{L^\infty(B_2)}\\
    &\leq \left(\dashint_{ B_1^+}|v_\infty/d_{B^+_2}^s|^{{p}}\,dx\right)^{1/p}+\mathrm{Tail}(v_\infty;B_1)+\|g_\infty\|_{L^\infty(B_2)}\\
    &\leq c_4\left(\int_{B_{3/2}}\dashint_{B_{3/2}}|d_sv_\infty|^p\frac{\,dx\,dy}{|x-y|^n}\right)^{1/p}+\mathrm{Tail}(v_\infty;B_1)+\|g_\infty\|_{L^\infty(B_2)}\\
    &\leq \liminf_{k\to\infty}\left[c_4\left(\int_{B_{3/2}}\dashint_{B_{3/2}}|d_sv_k|^p\frac{\,dx\,dy}{|x-y|^n}\right)^{1/p}+\mathrm{Tail}(v_k;B_1)+\|g_k\|_{L^\infty(B_2)}\right],
\end{aligned}
\end{equation}
where we have also used the fact that $v\equiv 0$ in $B^-_{2}$ and \eqref{defn.c1}. Indeed, since $d_k(0)\leq 5$, we have
\begin{align}\label{ineq0000.comprei}
    \mathrm{Tail}(v_k;B_1)\leq 5^s\max\{d_k(0),1\}^{-s}\mathrm{Tail}(v_k;B_1)\leq 5^s\mathcal{E}_{\Omega_k}(v_k;B_1).
\end{align}
Combining \eqref{ineq000.comprei} and \eqref{ineq0000.comprei} together with \eqref{ineq00.comprei} and \eqref{ineq3333.comprei}, we get
\begin{align*}
    \mathcal{E}_{B_2^+}(v_\infty;B_1)+\|g_\infty\|_{L^\infty(B_1)}\leq 5c_3c_4=C_0.
\end{align*}
On the other hand, for any $\psi\in X_0^{s,p}(B_2^+) \subset X_0^{s,p}(\Omega_k\cap B_{2})$, we have 
\begin{align*}
    \iint_{\bbR^{2n}}|d_s v_k|^{p-2}d_sv_k d_s\psi\frac{\,dx\,dy}{|x-y|^{n}}=\int_{B_2\cap \Omega_k}g_k\psi \,dx =\int_{B^+_2}g_k\psi \,dx .
\end{align*}
Thus, by the first convergence property given in \eqref{ineqk3.comprei} and \eqref{ineq333.comprei}, we have 
\begin{align*}
    \iint_{\bbR^{2n}}|d_s v_\infty|^{p-2}d_s v_{\infty} d_s\psi\frac{\,dx\,dy}{|x-y|^{n}}=\int_{B_2^+}g_\infty\psi\,dx,
\end{align*}
which shows that $v_\infty$ is a weak solution to 
\begin{equation*}%\label{eq.lim}
\left\{
\begin{alignedat}{3}
(-\Delta_p)^s{v_\infty}&= g_\infty&&\qquad \mbox{in  $B^+_2$}, \\
{v}_\infty&=0&&\qquad  \mbox{in $B^-_2$}
\end{alignedat} \right.
\end{equation*}
with
\begin{equation*}
    \mathcal{E}_{B^+_2}(v_\infty;B_1)+\|g_\infty\|_{L^\infty(B_{2})}\leq C_0.
\end{equation*}
Let us fix
\begin{equation*}
  \sigma\coloneqq s-\frac{s}{2}\frac{1}{(p-1)p}
\end{equation*}
to see that $\sigma-n/(p-1)= s-n/p$. Then H\"older's inequality implies
\begin{align*}
    \left(\dashint_{B_{1/2}^+}|(v_k-v_\infty)/d_k^{s}|^{{{p-1}}}\,dx\right)^{\frac1{p-1}}&\leq \left(\dashint_{B_{1/2}^+}|(v_k-v_\infty)/d_k^{\sigma}|^{{p}}\,dx\right)^{1/p}
    \left(\dashint_{B_{1/2}^+}{d_k^{-s/2}}\,dx\right)^{\frac{1}{p(p-1)}}.
\end{align*}
Since $x_n\leq d_k(x)$ for any $x\in B^+_{1/2}$ by \eqref{cond}, we have
\begin{align*}
    \left(\dashint_{B_{1/2}^+}{d_k^{-s/2}}\,dx\right)^{\frac{1}{p(p-1)}}\leq \left(\dashint_{B_{1/2}^+}{x_n^{-s/2}}\,dx\right)^{\frac{1}{p(p-1)}}\leq c
\end{align*}
and therefore
\begin{align}\label{ineq40.comprei}
     \left(\dashint_{B_{1/2}^+}|(v_k-v_\infty)/d_k^{s}|^{{p-1}}\,dx\right)^{1/(p-1)}\leq c\left(\dashint_{B_{1/2}^+}|(v_k-v_\infty)/d_k^{\sigma}|^{{p}}\,dx\right)^{1/p}
\end{align}
for some constant $c=c(n,s,p)$. 
On the other hand, due to the fact that $w_k\equiv v_k$ in $B_{2}$, \eqref{cond} and Lemma \ref{lem.hardy}, we get
\begin{equation}\label{ineq4.comprei}
\begin{aligned}
    \|w_k/d_k^{\sigma}-v_\infty/d_k^{\sigma}\|_{L^{p}(B^+_{1/2})}&=\|v_k/d_k^\sigma-v_\infty/d_k^\sigma\|_{L^{{p}}(B^+_{1/2})}\\
    &\leq \|v_k/d_k^\sigma-v_\infty/d_k^\sigma\|_{L^p(B_{1/2}\cap \Omega_k)}\\
    &\leq c[v_k-v_\infty]_{W^{\sigma,p}(B_{1})}
\end{aligned}
\end{equation}
for some $c=c(n,s,p)$; recall that $\sigma$ depends only on $s$ and $p$. By applying \cite[Theorem 1]{BreMir18} to the last term in \eqref{ineq4.comprei}, 
we deduce
\begin{align*}
    \|v_k/d_k^\sigma-v_\infty/d_k^\sigma\|_{L^p(B^+_{1/2})}\leq c[v_k-v_\infty]^{\sigma/s}_{W^{s,p}(B_{1})}\|v_k-v_\infty\|^{(s-\sigma)/s}_{L^{p}(B_{1})}.
\end{align*}
From this last estimate, together with \eqref{ineq40.comprei}, \eqref{ineq4.comprei}, \eqref{ineqk2.comprei} and \eqref{ineqk3.comprei}, we finally have
\begin{align*}
     \left(\dashint_{B_{1/2}^+}|(v_k-v_\infty)/d_k^{s}|^{{{p-1}}}\,dx\right)^{1/(p-1)}\leq c\|v_k-v_\infty\|^{\frac{1}{2(p-1)p}}_{L^{p}(B_{1/2})}\leq \epsilon_0/2
\end{align*}
by taking sufficiently large $k>1$. This contradicts \eqref{contr.comprei}, which completes the proof.
\end{proof}
In light of Lemma \ref{lem.comprei} and the Reifenberg flatness of $\Omega$, we derive an improved comparison estimate on the full domain $\Omega\cap B_{r/2}(z)$, instead of the subdomain $B_{r/2}^+(z)$. 
\begin{lemma}\label{lem.comprei1}
For any $\epsilon>0$, there is a constant $\delta=\delta(n,s,p,\epsilon)>0$ such that for any weak solution $w\in W^{s,p}(B_{4r}(z))\cap L^{p-1}_{sp}(\bbR^n)$ to 
    \begin{equation*}%\label{eq.rei1}
\left\{
\begin{alignedat}{3}
(-\Delta_p)^s{w}&= 0&&\qquad \mbox{in  $\Omega \cap B_{4r}(z)$}, \\
{w}&=0&&\qquad  \mbox{in $B_{4r}(z)\setminus \Omega$},
\end{alignedat} \right.
\end{equation*}
where $\Omega$ is $(\delta,5r)$-Reifenberg flat with
\begin{equation}\label{ass.rei1}
    B^+_{4r}(z)\subset B_{4r}(z)\cap\Omega\subset B_{4r}(z)\cap\{x_n>z_n-4r\delta\}
\end{equation}
and
\begin{equation}\label{ineq.rei1}
    \mathcal{E}_\Omega(w;B_{4r}(z))\leq \lambda,
\end{equation}
there is a weak solution $v\in W^{s,p}(B_{2r}(z))\cap L^{p-1}_{sp}(\bbR^n)$ to \eqref{eq.contv} such that
\begin{equation*}
   \left( \dashint_{\Omega\cap B_{r/2}(z)}|w/d_\Omega^s-v/d_\Omega^s|^{{p-1}}\,dx\right)^{1/(p-1)}\leq \epsilon\lambda\quad\text{and}\quad \|v/d_\Omega^s\|_{L^\infty(B_{r/2}(z)\cap \Omega)}\leq c\lambda
\end{equation*}
for some constant $c=c(n,s,p)$.
\end{lemma}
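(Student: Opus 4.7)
The plan is to apply Lemma \ref{lem.comprei} with a smallness parameter $\epsilon_1$ to be chosen later, producing a solution $v$ of \eqref{eq.contv} that is close to $w$ on the half-ball $B^+_{r/2}(z)$, and then to extend the comparison estimate to the full slice $\Omega\cap B_{r/2}(z)$ by controlling the extra piece
\[
    S\coloneqq\bigl(\Omega\cap B_{r/2}(z)\bigr)\setminus B^+_{r/2}(z).
\]
By the second inclusion in \eqref{ass.rei1}, $S\subset B_{r/2}(z)\cap\{z_n-4r\delta<x_n\leq z_n\}$, hence $|S|\leq cr^n\delta$; and since $S\subset B^-_{2r}(z)$, we have $v\equiv 0$ on $S$. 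The first inclusion in \eqref{ass.rei1}, namely $B^+_{4r}(z)\subset\Omega$, will be used repeatedly through $d_\Omega\geq d_{B^+_{2r}(z)}$.

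For the $L^\infty$-bound on $v/d_\Omega^s$, I would apply Lemma \ref{lem.dsest} to $v$ on the ball $B_r(z)$ (noting $v$ solves $(-\Delta_p)^s v=g$ in $B^+_r(z)$ and vanishes on $B^-_r(z)$), which gives
\[
    \bigl\|v/d_{B^+_r(z)}^s\bigr\|_{L^\infty(B^+_{r/2}(z))}\leq c\Bigl(\widetilde{E}(v/r^s;B_r(z))+\|r^sg\|_{L^\infty(B^+_r(z))}^{1/(p-1)}\Bigr).
\]
From \eqref{eq.est.v.lambda} the $g$-term is at most $C_0\lambda$, while the trivial bound $d_{B^+_{2r}(z)}\leq 2r$ on $B^+_r(z)$ together with $|v|=d_{B^+_{2r}(z)}^s\cdot|v/d_{B^+_{2r}(z)}^s|$ yields $\widetilde{E}(v;B_r(z))\leq cr^s\mathcal{E}_{B^+_{2r}(z)}(v;B_r(z))\leq cr^s\lambda$. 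Since $d_\Omega\geq d_{B^+_r(z)}$ on $B^+_{r/2}(z)$ and $v\equiv 0$ on $S$, this produces the desired bound $\|v/d_\Omega^s\|_{L^\infty(\Omega\cap B_{r/2}(z))}\leq c\lambda$.

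For the comparison estimate, I would split the integral over $\Omega\cap B_{r/2}(z)$ into integrals over $B^+_{r/2}(z)$ and $S$. The first is controlled directly by Lemma \ref{lem.comprei}, contributing at most $cr^n(\epsilon_1\lambda)^{p-1}$. On $S$, since $v\equiv 0$, the integrand reduces to $|w/d_\Omega^s|^{p-1}$. Lemma \ref{lem.hardy} with $\alpha=s$ and $\rho=1/2$ gives
\[
    \|w/d_\Omega^s\|_{L^p(\Omega\cap B_{r/2}(z))}\leq c[w]_{W^{s,p}(B_r(z))},
\]
and Lemma \ref{lem.ene}, combined with the pointwise bound $d_\Omega\leq cr$ on $B_{4r}(z)$ (from $d_\Omega(z)\leq 4r\delta$ and the Lipschitz continuity of $d_\Omega$), yields $\widetilde{E}(w;B_{4r}(z))\leq cr^s\lambda$ and hence $[w]_{W^{s,p}(B_r(z))}\leq cr^{n/p}\lambda$. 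H\"older's inequality then gives
\[
    \int_S|w/d_\Omega^s|^{p-1}\,dx\leq|S|^{1/p}\bigl(cr^n\lambda^p\bigr)^{(p-1)/p}\leq cr^n\lambda^{p-1}\delta^{1/p}.
\]
Summing the two contributions and dividing by $|\Omega\cap B_{r/2}(z)|\geq cr^n$ (measure density from \eqref{cond.meas}), I obtain
\[
    \biggl(\dashint_{\Omega\cap B_{r/2}(z)}|w/d_\Omega^s-v/d_\Omega^s|^{p-1}\,dx\biggr)^{1/(p-1)}\leq c\bigl(\epsilon_1+\delta^{1/(p(p-1))}\bigr)\lambda.
\]
Choosing first $\epsilon_1$ small enough (which fixes the associated $\delta$ from Lemma \ref{lem.comprei}) and then further shrinking $\delta$ so that $c\delta^{1/(p(p-1))}\leq\epsilon/2$ closes the argument. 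The main obstacle is the control of $|w|/d_\Omega^s$ on the slab $S$, where $d_\Omega^s$ degenerates precisely in the direction in which $S$ is thin; Lemma \ref{lem.hardy} provides $L^p$-integrability, H\"older's inequality converts the small measure of $S$ into a $\delta^{1/p}$-gain, and careful bookkeeping of how $\delta$ enters the final bound is what forces the dependence $\delta=\delta(n,s,p,\epsilon)$.
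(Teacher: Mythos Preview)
Your proof is correct and follows essentially the same strategy as the paper: apply Lemma~\ref{lem.comprei} on the half-ball $B^+_{r/2}(z)$, control the remaining thin slab $S$ via the Hardy inequality (Lemma~\ref{lem.hardy}) combined with H\"older and the small measure $|S|\leq cr^n\delta$, and derive the $L^\infty$-bound from Lemma~\ref{lem.dsest} together with $d_\Omega\geq d_{B^+_r(z)}$ on $B^+_{r/2}(z)$. One small simplification you make is to observe that $v\equiv 0$ on $S\subset B^-_{2r}(z)$, so only $|w/d_\Omega^s|$ needs to be estimated on the slab; the paper instead bounds $\|(w-v)/d_\Omega^s\|_{L^p(S)}$ by $[w-v]_{W^{s,p}(B_{3r/4}(z))}$ and controls $[w]_{W^{s,p}}$ and $[v]_{W^{s,p}}$ separately, which costs an additional energy estimate for $v$ via \eqref{eq.est.v.lambda}.
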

\begin{proof}
    Let  $\epsilon_0>0$ be a free parameter which will be chosen later. By Lemma \ref{lem.comprei}, there is a small constant $\delta=\delta(n,s,p,\epsilon_0)>0$ such that
   \begin{equation}\label{ineq0.comprei1}
   \left( \dashint_{ B^+_{r/2}(z)}|w/d_\Omega^s-v/d_\Omega^s|^{{p-1}}\,dx\right)^{1/(p-1)}\leq \epsilon_0\lambda,
\end{equation}
where $v$ is a weak solution to \eqref{eq.contv}.  Then by H\"older's inequality, we have
\begin{equation}\label{ineq1.comprei1}
\begin{aligned}
    & \left( r^{-n}\int_{ (\Omega\cap B_{r/2}(z))\setminus B_{r/2}^+(z)}\frac{|w-v|^{{p}}}{d_\Omega^{s{p}}}\,dx\right)^{1/(p-1)} \\
    &\leq \left( r^{-n}\int_{ (\Omega\cap B_{r/2}(z))\setminus B_{r/2}^+(z)}\frac{|w-v|^p}{d_\Omega^{sp}}\,dx\right)^{1/p} \left(\frac{\left|(\Omega\cap B_{r/2}(z))\setminus B_{r/2}^+(z)\right|}{r^n}\right)^{\frac{1}{p(p-1)}}.
\end{aligned}
\end{equation}
First, note from Lemma \ref{lem.hardy} and \eqref{cond.meas} with $R_0=4r$ that
\begin{align*}
    \left( r^{-n}\int_{ (\Omega\cap B_{r/2}(z))\setminus B_{r/2}^+(z)}|(w-v)/d_\Omega^s|^p\,dx\right)^{1/p}&\leq c  \left( \dashint_{ \Omega\cap B_{r/2}(z)}|(w-v)/d_\Omega^s|^p\,dx\right)^{1/p}\\
    &\leq cr^{-n/p}[w-v]_{W^{s,p}(B_{3r/4}(z))},
\end{align*}
where $c=c(n,s,p)$. Next, by Lemma \ref{lem.ene}, we have 
\begin{align*}
    r^{-n}[w]^p_{W^{s,p}(B_{3r/4}(z))}\leq c\left(\dashint_{\Omega\cap B_r(z)}|w/r^s|^{{p}}\,dx\right)^{p/(p-1)}+c\mathrm{Tail}(w/r^s;B_r(z))^p.
\end{align*}
Since $d_\Omega(x)\leq 2r$ for any $x\in B_r(z)$, we have
\begin{align*}
    r^{-n/p}[w]^p_{W^{s,p}(B_{3r/4}(z))}\leq c\mathcal{E}_\Omega(w;B_r(z))\leq c\lambda
\end{align*}
for some $c=c(n,s,p)$, where we have used \eqref{ineq.rei1} for the last inequality. Similarly, we have
\begin{align*}
    r^{-n/p}[v]_{W^{s,p}(B_{3r/4}(z))}\leq c\lambda
\end{align*}
for some constant $c=c(n,s,p)$, by using \eqref{eq.est.v.lambda}. Thus we get 
\begin{align}\label{ineq2.comprei1}
    \left( r^{-n}\int_{ (\Omega\cap B_{r/2}(z))\setminus B_{r/2}^+(z)}|(w-v)/d_\Omega^s|^{{p}}\,dx\right)^{1/p}\leq c\lambda,
\end{align}
where $c=c(n,s,p)$. In addition, by \eqref{ass.rei1}, we derive
\begin{align}\label{ineq3.comprei1}
    \left(\frac{\left|(\Omega\cap B_{r/2}(z))\setminus B_{r/2}^+(z)\right|}{r^n}\right)^{\frac{1}{p(p-1)}}\leq c\delta^{\frac{1}{p(p-1)}}
\end{align}
for some constant $c=c(n,s,p)$. We now plug \eqref{ineq2.comprei1} and \eqref{ineq3.comprei1} into \eqref{ineq1.comprei1} to get
\begin{align*}
    \left( r^{-n}\int_{ (\Omega\cap B_{r/2}(z))\setminus B_{r/2}^+(z)}|(w-v)/d_\Omega^s|^{{p-1}}\,dx\right)^{1/(p-1)}\leq c\delta^{\frac{1}{p(p-1)}}\lambda
\end{align*}
for some constant $c=c(n,s,p,\sigma)$, which together with \eqref{ineq0.comprei1} leads to
\begin{align*}
    \left( \dashint_{ \Omega\cap B_{r/2}(z)}|(w-v)/d_\Omega^s|^{{p-1}}\,dx\right)^{\frac1{p-1}}&\leq c\left( \dashint_{ B^+_{r/2}(z)}|w/d_\Omega^s-v/d_\Omega^s|^{{p-1}}\,dx\right)^{\frac1{p-1}}\\
    &\quad+c\left( \frac1{r^{n}}\int_{ (\Omega\cap B_{r/2}(z))\setminus B_{r/2}^+(z)}|(w-v)/d_\Omega^s|^{{p-1}}\,dx\right)^{\frac1{p-1}}\\
    &\leq c(\epsilon_0+\delta^{\frac{1}{p(p-1)}})\lambda,
\end{align*}
where $c=c(n,s,p)$. Therefore, by choosing $\varepsilon_0$ in a suitable way depending on $\varepsilon$, we get
\begin{equation*}
    \left( \dashint_{ \Omega\cap B_{r/2}(z)}|w/d_\Omega^s-v/d_\Omega^s|^{{p-1}}\,dx\right)^{1/(p-1)}\leq \epsilon\lambda,
\end{equation*}
for a sufficiently small $\delta=\delta(n,s,p,\sigma,\varepsilon)>0$. Moreover, by Lemma \ref{lem.dsest} and the fact that $d_{r}(x)\coloneqq \mathrm{dist}(x,B^-_{r}(z))\leq cd_\Omega(x)$ for any $x\in B^+_{r/2}(z)$, we have 
\begin{align*}
    \|v/d_\Omega^s\|_{L^\infty(B^+_{r/2}(z))}\leq \|v/d_{r}^s\|_{L^\infty(B^+_{r/2}(z))}\leq c\widetilde{E}(v/r^s;B_{r}(z))+c\|r^sg\|^{1/(p-1)}_{L^\infty(B^+_{r}(z))}
\end{align*}
for some constant $c=c(n,s,p)$. Since $d_{r}(x)\leq 2r$ for any $x\in B^+_{r}(z)$, we derive 
\begin{align*}
     \|v/d_\Omega^s\|_{L^\infty(B^+_{r/2}(z))}\leq c\mathcal{E}_{B^+_{2r}(z)}(v;B_{r}(z))+c\|r^sg\|^{1/(p-1)}_{L^\infty(B^+_{r}(z))}\leq c\lambda,
\end{align*}
where we have also used \eqref{eq.est.v.lambda}. 
In addition, since $v\equiv 0$ in $B_{r/2}^{-}(z)$, we have 
\begin{align*}
    \|v/d_{\Omega}^s\|_{L^\infty(\Omega\cap B_{r/2}^+(z))}\leq c\lambda
\end{align*}
for some constant $c=c(n,s,p)$, which completes the proof.
\end{proof}
%\section{Higher integrability of the quotient of the solution and the distance function of order \MakeLowercase{s}}
\section{Boundary integrability of $u/d^s$ in nonsmooth domains}
In this section, based on the comparison estimates given in the previous section, we prove Theorem \ref{thm.lp}. 
Let $u\in W^{s,p}(B_{3/2})\cap L^{p-1}_{sp}(\bbR^n)$ be a weak solution to
 \begin{equation*}
\left\{
\begin{alignedat}{3}
(-\Delta_p)^s{u}&= f&&\qquad \mbox{in  $\Omega \cap B_{3/2}$}, \\
u&=0&&\qquad  \mbox{in $B_{3/2}\setminus \Omega$},
\end{alignedat} \right.
\end{equation*}
where $\Omega$ is $(\delta,2)$-Reifenberg flat with $\delta<1/2$. We now set 
    \begin{equation}\label{defn.lambda1}
        \lambda_1\coloneqq \mathcal{E}_\Omega(u;B_{3/2})+\left(\dashint_{\Omega\cap B_{3/2}}|f|^{p_*}\,dx\right)^{\frac1{p_*(p-1)}}.
    \end{equation}
    Next, we recall maximal and fractional maximal functions. Let us write
\begin{align*}
    M^\Omega_{r}(v)(z)\coloneq \sup_{0<\rho \le r}\dashint_{\Omega\cap B_\rho(z)}|v|\,dx\quad\text{and}\quad M^\Omega_{\beta,r}(v)(z)\coloneq \sup_{0<\rho \le r}\rho^{\beta}\dashint_{\Omega\cap B_\rho(z)}|v|\,dx,
\end{align*}
where $\beta \in [0,n)$, $r\in(0,1/2]$ and $z\in B_{1/2}$.
By \cite[(2.4)]{ByuKimKum25} and \eqref{cond.meas} with $r=1/2$, we have 
\begin{align}\label{ineq.maxfrac}
    \|M^\Omega_{\beta,r}(v)\|_{L^{\frac{np}{n-p\beta}}(B_{r/2}(z))}\leq c\|f\|_{L^{p}(B_r(z))}
\end{align}
for some constant $c=c(n,s,p,\beta)$.

    For any $\lambda\geq \lambda_1$ and $N\geq1$, let us define
    \begin{equation*}
        \mathcal{C}\coloneqq \left\{z\in \Omega\cap B_{1/4}:M_{2^{-6}}^{\Omega}(|u/d_\Omega^s|^{{p-1}})(z)>(N\lambda)^{{p-1}}\right\}
    \end{equation*}
    and
    \begin{align*}
        \mathcal{D}&\coloneqq\left \{z\in \Omega\cap B_{1/4}\,:\,M_{2^{-6}}^{\Omega}(|u/d_\Omega^s|^{p-1})(z)>\lambda^{p-1}\right\}\\
        &\qquad\cup \left\{z\in \Omega\cap B_{1/4}\,:\,M^{\Omega}_{sp_*,2^{-6}}(|f|^{p_*})(z)>(\delta\lambda)^{{p_*}{(p-1)}}\right\}.
    \end{align*}
    With these notations, we start with the following measure density lemma.
    \begin{lemma}\label{lem.measep}
        Let $\lambda \ge \lambda_1$. 
        For any $\epsilon>0$, there are  constants $N=N(n,s,p)\geq1$ and $\delta=\delta(n,s,p,\epsilon)\in(0,1)$ such that if $\Omega$ is $(\delta,64\rho)$-Reifenberg flat and
    \begin{align*}
        \Omega\cap B_{\rho/8}(z)\not\subset \mathcal{D},
    \end{align*}
    where $z\in \Omega\cap B_{1/4}$ and $\rho\in(0,2^{-20}]$, then
    \begin{equation}\label{goal0.measep}
        |\mathcal{C}\cap B_{\rho/8}(z)|\leq \epsilon|B_{\rho/8}|.
    \end{equation}
    \end{lemma}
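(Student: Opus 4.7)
The strategy is a Caffarelli-Peral-style good-$\lambda$ argument driven by the two comparison lemmas already proved. By hypothesis, pick $y_0\in(\Omega\cap B_{\rho/8}(z))\setminus\mathcal{D}$; at $y_0$ both controlling maximal functions obey $M^\Omega_{2^{-6}}(|u/d^s|^{p-1})(y_0)\leq\lambda^{p-1}$ and $M^\Omega_{sp_*,2^{-6}}(|f|^{p_*})(y_0)\leq(\delta\lambda)^{p_*(p-1)}$, which will feed every averaged estimate below. I then split according to the size of $d_\Omega(z)$.

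In the interior regime $d_\Omega(z)>\rho$, the ball $B_{\rho/8}(z)$ sits well inside $\Omega$ with $d_\Omega$ of order $\rho$. An interior $L^\infty$ bound in the spirit of Lemma~\ref{lem.bdd} applied on $B_{\rho}(x)\subset\Omega$, combined with the averages inherited from $y_0$, gives $M^\Omega_{2^{-6}}(|u/d^s|^{p-1})\leq C\lambda^{p-1}$ uniformly on $B_{\rho/8}(z)$ with $C=C(n,s,p)$. Fixing $N$ large enough (depending only on $n,s,p$, and also absorbing all later structural constants from the boundary case) makes $\mathcal{C}\cap B_{\rho/8}(z)=\emptyset$, so \eqref{goal0.measep} holds trivially.

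In the boundary regime $d_\Omega(z)\leq\rho$, choose $z^*\in\partial\Omega$ with $|z-z^*|\leq\rho$ and set $r\coloneqq 4\rho$, so that $B_{5r}(z^*)$ fits inside the Reifenberg scale $64\rho$ and $B_{r/2}(z^*)\supset B_{\rho/2}(z)$. Combining the $y_0$-average (valid on $B_{4r}(z^*)$ since $4r+|y_0-z^*|\leq 2^{-6}$ for the admissible $\rho$) with $\lambda\geq\lambda_1$ for the nonlocal tail yields $\mathcal{E}_\Omega(u;B_{4r}(z^*))\leq C_1\lambda$. Lemma~\ref{lem.comp0} then produces a homogeneous comparison $w$ with $\bigl(\dashint_{\Omega\cap B_{4r}(z^*)}|(u-w)/d^s|^{p-1}\,dx\bigr)^{1/(p-1)}\leq c\delta^{\beta}\lambda$ for some $\beta=\beta(p)>0$ (using the $f$-bound at $y_0$), so that $\mathcal{E}_\Omega(w;B_{4r}(z^*))\leq C_2\lambda$ once $\delta$ is small. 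After rotating coordinates according to Definition~\ref{defn.rei} at $z^*$, Lemma~\ref{lem.comprei1} with a free small parameter $\varepsilon'>0$ delivers a half-space solution $v$ satisfying $\|v/d^s\|_{L^\infty(\Omega\cap B_{r/2}(z^*))}\leq C_3\lambda$ and $\bigl(\dashint_{\Omega\cap B_{r/2}(z^*)}|(w-v)/d^s|^{p-1}\,dx\bigr)^{1/(p-1)}\leq\varepsilon' C_2\lambda$.

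The conclusion comes from a weak-type $(1,1)$ argument. For $x\in\mathcal{C}\cap B_{\rho/8}(z)$, maximal balls of radius $\rho'\in(\rho/4,2^{-6}]$ centered at $x$ embed by measure density into $y_0$-balls, so their averages are $\leq c\lambda^{p-1}$; enlarging $N$ therefore confines the realizing radius to $\rho'\leq\rho/4$, and hence inside $B_{r/2}(z^*)$. The pointwise bound $|u/d^s|^{p-1}\leq c|(u-v)/d^s|^{p-1}+cC_3^{p-1}\lambda^{p-1}$ transfers the large maximal function onto $(u-v)/d^s$ once $N$ is further enlarged to beat $C_3$, and weak-$L^1$ then yields $|\mathcal{C}\cap B_{\rho/8}(z)|\leq c(\delta^{\beta}+\varepsilon')^{p-1}|B_\rho|$. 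Fixing $\varepsilon'$ first and then $\delta$ small enough in terms of $\epsilon$ produces \eqref{goal0.measep}. \textbf{The main obstacle} is the tail piece of $\mathcal{E}_\Omega(u;B_{4r}(z^*))$: the local averages of $|u/d^s|^{p-1}$ are supplied directly by the $y_0$-bound, whereas $\mathrm{Tail}(u;B_{4r}(z^*))$ requires a careful dyadic interpolation linking the $y_0$-bounds (valid only up to scale $2^{-6}$) to the global datum $\mathcal{E}_\Omega(u;B_{3/2})\leq\lambda_1\leq\lambda$. Once this tail estimate is in place, the remainder is the standard good-$\lambda$ mechanism.
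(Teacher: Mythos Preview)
Your boundary regime and your identification of the tail interpolation as the delicate step are correct and match the paper's argument. The interior regime, however, has a genuine gap.

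You claim that when $d_\Omega(z)>\rho$ an $L^\infty$ bound ``in the spirit of Lemma~\ref{lem.bdd}'' yields $M^\Omega_{2^{-6}}(|u/d^s|^{p-1})\leq C\lambda^{p-1}$ on $B_{\rho/8}(z)$, and hence $\mathcal{C}\cap B_{\rho/8}(z)=\emptyset$. But Lemma~\ref{lem.bdd} requires $f\in L^\infty$, while in the setting of Lemma~\ref{lem.measep} one only has $f\in L^{p_*}$ together with the fractional maximal control $M^\Omega_{sp_*,2^{-6}}(|f|^{p_*})(y_0)\leq(\delta\lambda)^{p_*(p-1)}$. When $sp<n$ the exponent $p_*$ lies strictly below the boundedness threshold $n/(sp)$, so $L^{p_*}$ data does not produce pointwise bounds on $u$, and nothing in the paper (or in your sketch) shows that the Morrey-type control supplied by the maximal bound is enough. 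Without a pointwise bound on $u/d^s$ at the small scales $r\leq\rho/8$, the conclusion $\mathcal{C}\cap B_{\rho/8}(z)=\emptyset$ is unjustified.

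The paper does \emph{not} try to bound $u$ directly in the interior case. It runs the same comparison mechanism as at the boundary: apply Lemma~\ref{lem.comp0} on a ball $B_\rho(z)\Subset\Omega$ to produce the homogeneous comparison $w$, then bound $w/d^s$ in $L^\infty$ via Lemma~\ref{lem.bdd} (legitimate now because $(-\Delta_p)^s w=0$), and finally use weak-$(1,1)$ on $(u-w)/d^s$. The outcome is $|\mathcal{C}\cap B_{\rho/8}(z)|\leq c\delta^{p(p-1)/2}|B_{\rho/8}|$, a small-measure estimate rather than emptiness. Your shortcut skips exactly the comparison step that makes the $L^\infty$ input available.
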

    \begin{proof}
    Assume that $\Omega \cap B_{\rho/8}(z) \not\subset \mathcal{D}$, then there exists a point $x_0\in \Omega\cap B_{\rho/8}(z)$ such that 
    \begin{align}\label{ineq0.measep}
        \sup_{0<r\leq 2^{-6}}\dashint_{\Omega\cap B_r(x_0)}|u/d_{\Omega}^s|^{p-1}\,dx\leq \lambda^{p-1}\quad\text{and}\quad \sup_{0<r\leq 2^{-6}}\dashint_{\Omega\cap B_r(x_0)}r^{sp_*}|f|^{p_*}\,dx\leq (\delta\lambda)^{{p_*}{(p-1)}}.
    \end{align}
    With $y\in \overline{\Omega}$ being any point with $|y-x_0|\leq 5\rho$, we first show
    \begin{align}\label{ineq00.measep}
        \mathcal{E}_\Omega(u;B_{2^6\rho}(y))\leq c\lambda
    \end{align}
    for some constant $c=c(n,s,p)$. 
    To this end, we observe
    \begin{align*}
         \mathcal{E}_{\Omega}(u;B_{2^6\rho}(y)) &\leq \left(\dashint_{\Omega\cap B_{2^{6}\rho}(y)}|u/d_{\Omega}^s|^{p-1}\,dx\right)^{1/(p-1)}\\
         &\quad+c\max\{d_{\Omega}(y),2^6\rho\}^{-s}\left(\sum_{i=0}^{l}2^{-{sp}i}\dashint_{ B_{2^{6+i}\rho}(y)}|u|^{p-1}\,dx\right)^{1/(p-1)}\\
         &\quad+c2^{-\frac{sp}{p-1}l}\max\{d_{\Omega}(y),2^6\rho\}^{-s}\mathrm{Tail}(u;B_{2^{6+l}\rho}(y))\\
         &\leq \left(\dashint_{\Omega\cap B_{2^{6}\rho}(y)}|u/d_{\Omega}^s|^{p-1}\,dx\right)^{\frac1{p-1}}+c\left(\sum_{i=0}^{l}2^{-{s}i}\dashint_{\Omega\cap B_{2^{6+i}\rho}(y)}|u/d_{\Omega}^s|^{p-1}\,dx\right)^{1/(p-1)}\\
         &\quad+c2^{-\frac{sp}{p-1}l}\max\{d_{\Omega}(y),2^6\rho\}^{-s}\mathrm{Tail}(u;B_{2^{6+l}\rho}(y)) \\
         & \eqqcolon J_{1} + J_{2} + J_{3},
    \end{align*}
where $l$ is the positive integer satisfying
\begin{equation}\label{rho.measep}
    2^{-9}<2^{6+l}\rho\leq 2^{-8}.
\end{equation}
Note that in the above estimate, we have also used the facts that $u\equiv 0$ in $B_{2^{6+i}\rho}(y)\setminus \Omega$ and that
\begin{align*}
    d_\Omega(x)\leq  c2^i\max\{d_\Omega(y),2^6\rho\}\quad\text{for any }x\in B_{2^{6+i}\rho}(y).
\end{align*}
Since
\begin{align*}
    B_{2^{4+i}\rho}(x_0)\subset B_{2^{5+i}\rho}(y)\subset B_{2^{6+i}\rho}(x_0)\quad\text{and}\quad B_{2^{7+l}\rho}(x_0)\subset B_{2^{-7}}(x_0),
\end{align*}
we further have 
\begin{equation}\label{ineq10.measep}
\begin{aligned}
    J_1+J_2&\leq c\left(\dashint_{\Omega\cap B_{2^{7}\rho}(x_0)}|u/d_{\Omega}^s|^{p-1}\,dx\right)^{\frac1{p-1}}+c\left(\sum_{i=0}^{l}2^{-{s}i}\dashint_{\Omega\cap B_{2^{7+i}\rho}(x_0)}|u/d_{\Omega}^s|^{p-1}\,dx\right)^{\frac1{p-1}}\\
    &\leq c\lambda+c\left(\sum_{i=0}^{l}2^{-{s}i}\lambda^{p-1}\right)^{\frac{1}{p-1}}\\
    &\leq c\lambda
\end{aligned}
\end{equation}
for some constant $c=c(n,s,p)$, where we have used H\"older's inequality and \eqref{ineq0.measep}.
We next observe 
\begin{align}\label{ineq1.measep}
   \rho^{sp/(p-1)}\max\{d_{\Omega}(y),2^6\rho\}^{-s}\leq  c\max\{d_{\Omega}(0),3/2\}^{-s}
\end{align}
for some constant $c=c(n,s)$. To show this, first we note $d_{\Omega}(0)\leq d_{\Omega}(y)+3$, as 
\begin{align*}
    d_{\Omega}(0)=|\overline{0}|\leq |\overline{y}|\leq |y-\overline{y}|+|y|\leq |y-\overline{y}|+|y-x_0|+|x_0-z|+|z|\leq d_{\Omega}(y)+3,
\end{align*}
where $\overline{0},\overline{y}\in\bbR^n\setminus \Omega$ are such that $d_{\Omega}(0) = |\overline{0}|$ and $d_{\Omega}(y)=|y-\overline{y}|$, respectively. Thus, if $d_{\Omega}(y)\leq 2^{6}\rho$, then we have 
\begin{align*}
    \rho^{sp/(p-1)}\max\{d_{\Omega}(y),2^6\rho\}^{-s}\leq c\rho^{s/(p-1)} \leq  c\max\{d_{\Omega}(0),3/2\}^{-s}.
\end{align*}
In addition, if $d_{\Omega}(y)>2^{6}\rho$, then we get
\begin{align*}
    \rho^{sp/(p-1)}\max\{d_{\Omega}(y),2^6\rho\}^{-s}\leq \rho^{sp/(p-1)}d_{\Omega}(y)^{-s}\leq \frac{c\rho^{s/(p-1)}}{(d_{\Omega}(y)+3)^{s}}\leq c\max\{d_{\Omega}(0),3/2\}^{-s},
\end{align*}
which verifies \eqref{ineq1.measep}.
On the other hand, we estimate $J_3$ as 
\begin{align*}
    J_3&\leq c2^{-\frac{sp}{p-1}l}\max\{d_{\Omega}(y),32\rho\}^{-s}\left[\left(\dashint_{B_{3/2}\cap \Omega}|u|^{p-1}\,dx\right)^{1/(p-1)}+\mathrm{Tail}(u;B_{3/2})\right]\\
    &\leq c\left(\dashint_{B_{3/2}\cap \Omega}|u/d_\Omega^s|^{p-1}\,dx\right)^{1/(p-1)}+c\max\{d_{\Omega}(0),3/2\}^{-s}\mathrm{Tail}(u;B_{3/2})\\
    &\leq c\lambda
\end{align*}
for some $c=c(n,s,p)$, where we have used H\"older's inequality, \eqref{rho.measep}, \eqref{ineq1.measep},  \eqref{defn.lambda1} and the fact that 
\[ d_\Omega(x)\leq 2\max\{d_{\Omega}(0),3/2\} \quad\text{for any }x\in B_{3/2}\cap \Omega. \]
Combining the above estimate and \eqref{ineq10.measep}, we have \eqref{ineq00.measep}.

    Having \eqref{ineq00.measep} at hand, we now prove \eqref{goal0.measep} by taking $\delta=\delta(n,s,p,\epsilon) \in (0,1)$ sufficiently small and $N=N(n,s,p) \ge 1$ sufficiently large. We distinguish two cases, depending on the location of the point $z$.
 
    \textbf{Step 1: The interior case.} We first assume that $d_\Omega(z)=\mathrm{dist}(z,\bbR^n\setminus\Omega)>2\rho$. In this case, we observe $B_{3\rho/2}(z)\Subset \Omega$. Let $w\in W^{s,p}(B_{3\rho/2}(z))\cap L^{p-1}_{sp}(\bbR^n)$ be a weak solution to 
    \begin{equation*}
\left\{
\begin{alignedat}{3}
(-\Delta_p)^s{w}&= 0&&\qquad \mbox{in  $\Omega \cap B_{\rho}(z)$}, \\
w&=u&&\qquad  \mbox{in $\bbR^n\setminus (B_{\rho}(z)\cap \Omega)=\bbR^n\setminus B_\rho(z)$}.
\end{alignedat} \right.
\end{equation*}
Note that the existence of $w$ follows from \cite[Proposition 2.12]{BraLinSch18}.  We observe
\begin{equation}\label{ineq2.measep}
            d_\Omega(z)/4\leq d_\Omega(x)\leq 4d_\Omega(z)\quad\text{for any }x\in \Omega\cap B_{5\rho/4}(z)=B_{5\rho/4}(z),
        \end{equation} which follows from the following:
        \begin{align*}
            d_\Omega(x)=|x-\overline{x}|\leq |x-\overline{z}|\leq |x-z|+|z-\overline{z}|\leq 5\rho/4 +d_{\Omega}(z)<4d_{\Omega}(z)
        \end{align*}
        and
        \begin{align*}
            d_\Omega(x)=|x-\overline{x}|\geq |z-\overline{x}|-|x-z|\geq d_{\Omega}(z)-5\rho/4>d_{\Omega}(z)/4,
        \end{align*}
        where $\overline{x},\overline{z}\in \bbR^n\setminus \Omega$ are such that $d_{\Omega}(x) = |x-\overline{x}|$ and $d_{\Omega}(z)=|z-\overline{z}|$, respectively. 
       By \eqref{ineq4.comp0} and \eqref{ineq3.comp0}, we have
        \begin{equation*}
        \begin{aligned}
            &\int_{B_{9\rho/8}(z)}\dashint_{B_{9\rho/8}(z)}|d_s(u-w)|^p\frac{\,dx\,dy}{|x-y|^n}\\
            &\leq c\left[{\mbox{\Large$\chi$}}_{\{p<2\}}\widetilde{E}(u/\rho^s;B_{5\rho/4}(z))+\left(\dashint_{\Omega\cap B_{5\rho/4}(z)}|\rho^sf|^{p_*}\,dx \right)^{\frac1{p_*(p-1)}}\right]^{\frac{p(2-p)}{2}}\\
            &\qquad\times \left(\dashint_{\Omega\cap B_{5\rho/4}(z)}|\rho^sf|^{p_*}\,dx \right)^{\frac{p^2}{2p_*(p-1)}}
        \end{aligned}
        \end{equation*}
for some constant $c=c(n,s,p)$. By H\"older's inequality and \cite[Lemma 4.7]{Coz17} along with the fact that $u-w\equiv 0$ in $B_{9\rho/8}(z)\setminus B_\rho(z)$, we obtain 
\begin{equation}\label{ineq31.measep}
        \begin{aligned}
            & \left(\dashint_{B_{\rho}(z)}|(u-w)/\rho^s|^{p-1}\,dx\right)^{1/(p-1)} \\
            &\leq c\left[{\mbox{\Large$\chi$}}_{\{p<2\}}\widetilde{E}\left(u/\rho^s;B_{5\rho/4}(z)\right)+\left(\dashint_{\Omega\cap B_{3\rho/2}(x_0)}|\rho^sf|^{p_*}\,dx \right)^{\frac1{p_*(p-1)}}\right]^{\frac{2-p}{2}}\\
            &\qquad\times \left(\dashint_{\Omega\cap B_{3\rho/2}(x_0)}|\rho^sf|^{p_*}\,dx \right)^{\frac{p}{2p_*(p-1)}},
        \end{aligned}
        \end{equation}
        where we have also used the fact that $B_{5\rho/4}(z)\subset B_{3\rho/2}(x_0)$, as $|x_0-z|\leq \rho/8$.
Multiplying both sides of \eqref{ineq31.measep} by the constant $\rho^s/d^s_{\Omega}(z)$, we get
\begin{equation*}
        \begin{aligned}
            &\left(\dashint_{B_{\rho}(z)}|(u-w)/d_\Omega^s|^{p-1}\,dx\right)^{1/(p-1)}\\
            &\leq c\left[{\mbox{\Large$\chi$}}_{\{p<2\}}\mathcal{E}(u;B_{5\rho/4}(z))+\left(\dashint_{\Omega\cap B_{3\rho/2}(x_0)}|\rho^sf|^{p_*}\,dx \right)^{\frac1{p_*(p-1)}}\right]^{\frac{2-p}{2}}\\
            &\qquad\times \left(\dashint_{\Omega\cap B_{3\rho/2}(x_0)}|\rho^sf|^{p_*}\,dx \right)^{\frac{p}{2p_*(p-1)}},
        \end{aligned}
        \end{equation*}
        where we have also used \eqref{ineq2.measep} and the fact that 
        \begin{align*}
            \rho^s/d^s_{\Omega}(z)\leq 1.
        \end{align*}
We now use \eqref{ineq0.measep}, \eqref{ineq00.measep} with $y=z$ and \eqref{ineq.ftnl} to see that 
\begin{align}\label{ineq3.measep}
    \left(\dashint_{B_{\rho}(z)}|(u-w)/d_\Omega^s|^{p-1}\,dx\right)^{\frac1{p-1}}\leq c\left({\mbox{\Large$\chi$}}_{\{p<2\}}\mathcal{E}(u;B_{5\rho/4}(z))+\delta\lambda \right)^{(2-p)/2} \left(\delta\lambda \right)^{p/2}\leq c\delta^{p/2}\lambda 
\end{align}
for some constant $c=c(n,s,p)$. In addition, by Lemma \ref{lem.bdd} together with \eqref{ineq2.measep}, we derive
\begin{align*}
    \|w/d_\Omega^s\|_{L^\infty(B_{\rho/2}(z))}\leq cd^{-s}_\Omega(z)\|w\|_{L^\infty(B_{\rho/2}(z))}&\leq cd^{-s}_\Omega(z)\widetilde{E}(w;B_\rho(z)).
\end{align*}
for some constant $c=c(n,s,p)$. By \eqref{ineq2.measep}, \eqref{ineq00.measep} with $y=z$, \eqref{ineq.ftnl} and \eqref{ineq3.measep}, we further estimate
\begin{align*}
   \|w/d_\Omega^s\|_{L^\infty(B_{\rho/2}(z))}\leq c\mathcal{E}(w;B_{\rho}(z))\leq  c\left[\mathcal{E}(w-u;B_\rho(z))+\mathcal{E}(u;B_\rho(z))\right]\leq c\lambda
\end{align*}
for some constant $c=c(n,s,p)$.
Summarizing, we have obtained 
\begin{align*}
    \left(\dashint_{B_\rho(z)\cap\Omega}|(u-w)/d_\Omega^s|^{p-1}\,dx\right)^{1/(p-1)}\leq c\delta^{p/2}\lambda\quad\text{and} \quad\|w/d_\Omega^s\|_{L^\infty(B_{\rho/2}(z)\cap\Omega)}\leq c\lambda.
\end{align*}
At this stage, we proceed as in the proof of \cite[(4.26) in Lemma 4.4]{ByuKimKum25}, with $|(u-v)/d^s|$ replaced by $|(u-w)/d_\Omega^s|^{p-1}$, in order to deduce that if $N\geq N_1$ for a sufficiently large constant $N_1=N_1(n,s,p) \ge 1$, then
\begin{align}\label{ineq4.measep}
    \mathcal{C}\cap B_{\rho/8}(z)\subset \{x\in B_{\rho/8}(z)\cap \Omega\,:\, M^{\Omega}_{\rho/16}(|(u-w)/d_\Omega^s|^{p-1})(z)>\lambda^{p-1}\}.
\end{align}
%Indeed, by following the same lines as in the proof of (4.26) in \cite[Lemma 4.4]{ByuKimKum25} with $|(u-v)/d^s|$ replaced by $|(u-w)/d_\Omega^s|^{p-1}$, we get \eqref{ineq4.measep} when $N\geq N_1$ for sufficiently large constant $N_1=N_1(n,s,p)$.
Therefore, in light of weak 1-1 estimates, we have 
\begin{align}\label{goal1.measep}
    |\mathcal{C}\cap B_{\rho/8}(z)|\leq  \lambda^{-p-1}\dashint_{B_\rho(z)\cap\Omega}|(u-w)/d_\Omega^s|^{p-1}\,dx\leq c\delta^{(p-1)p/2}\leq \epsilon
\end{align}
by taking $\delta=\delta(n,s,p,\epsilon)>0$ sufficiently small.

\textbf{Step 2: The boundary case.} We next assume that $d_{\Omega}(z) = \mathrm{dist}(z,\bbR^n\setminus \Omega)\leq 2\rho$. 
        Then there is a point $z_0 = (z'_{0},z_{0,n})\in (\bbR^n\setminus \Omega)\cap B_{3\rho}(x_0)$ such that $|z-z_0|=d_\Omega(z)$. 
        In addition, due to Definition \ref{defn.rei} and Lemma \ref{lem.rei} along with the fact that $\Omega$ is $(\delta,64\rho)$-Reifenberg flat, we may assume 
        \begin{align*}
            B_{64\rho}(z_0)\cap\{x_n>z_{0,n}+64\rho\delta\}\subset \Omega\cap B_{64\rho}(z_0)\subset B_{64\rho}(z_0)\cap\{x_n>z_{0,n}-64\rho \delta\}.
        \end{align*}
        Let us write 
        \begin{equation*}
            \overline{z_0}\coloneqq(z_0',z_{0,n}+64\rho\delta)
        \end{equation*}
        to see that $d_\Omega(\overline{z_0})\leq \rho$, 
        \begin{align*}
            B^+_{32\rho}(\overline{z_0}) \subset \Omega\cap B_{32\rho}(\overline{z_0})\subset B_{32\rho}(\overline{z_0})\cap\{x_n>\overline{z_{0,n}}-32\rho \times 4\delta\}
        \end{align*}
        with $\delta < 2^{-10}$, and
        \begin{align*}
            B_\rho(z)\subset B_{4\rho}(\overline{z}).
        \end{align*}
        Note that since $\Omega$ is $(\delta,64\rho)$-Reifenberg flat, it is also $(4\delta,32\rho)$-Reifenberg flat.
Let $w\in W^{s,p}(\bbR^n)$ be a weak solution to 
\begin{equation*}
\left\{
\begin{alignedat}{3}
(-\Delta_p)^s{w}&= 0&&\qquad \mbox{in  $\Omega \cap B_{32\rho}(\overline{z_0})$}, \\
w&=u&&\qquad  \mbox{in $\bbR^n\setminus (B_{32\rho}(\overline{z_0})\cap \Omega)$}.
\end{alignedat} \right.
\end{equation*}
Now we are going to prove 
\begin{align*}
    \mathcal{E}(w;B_{32\rho}(\overline{z_0}))\leq c\lambda,
\end{align*}
where $c=c(n,s,p)$.
By Lemma \ref{lem.comp0} together with the fact that  
\begin{align*}
    \widetilde{E}(u/\rho^s;B_{40\rho}(\overline{z_0}))\leq \mathcal{E}(u;B_{40\rho}(\overline{z_0}))\quad \text{and}\quad B_{40\rho}(\overline{z_0})\subset B_{45\rho}(x_0),
\end{align*} 
we get 
\begin{equation*}
\begin{aligned}
   & \left(\dashint_{\Omega\cap B_{32}(\overline{z_0})}|(u-w)/d_\Omega^s|^{p-1}\,dx\right)^{1/(p-1)}\\
    &\leq  c\left[{\mbox{\Large$\chi$}}_{\{p<2\}}\mathcal{E}(u;B_{40\rho}(\overline{z_0}))+\left(\dashint_{\Omega\cap B_{45}(x_0)}|\rho^sf|^{p_*}\,dx\right)^{\frac{1}{p_*(p-1)}}\right]^{\frac{2-p}{2}}\\
        &\quad\times \left(\dashint_{\Omega\cap B_{45}(x_0)}|\rho^sf|^{p_*}\,dx\right)^{\frac{p}{2p_*(p-1)}},
\end{aligned}
\end{equation*}
where $c=c(n,s,p)$. Since $|\overline{z_0}-x_0|\leq |z_0-\overline{z}_0|+|x_0-z_0|\leq 5\rho$, we use \eqref{ineq00.measep} with $y=z_0$, \eqref{ineq.ftnl} and \eqref{ineq0.measep} to get 
\begin{align}\label{ineq5.measep}
     \left(\dashint_{\Omega\cap B_{32}(\overline{z_0})}|(u-w)/d_\Omega^s|^{p-1}\,dx\right)^{1/(p-1)}\leq c\delta^{p/2}\lambda,
\end{align}
where $c=c(n,s,p)$. Using this, the fact that $w\equiv u$ in $\bbR^n\setminus B_{32\rho}(\overline{z_0})$ and \eqref{ineq00.measep} with $y=\overline{z_0}$, we obtain
\begin{align*}
    \mathcal{E}(w;B_{32\rho}(\overline{z_0}))\leq \mathcal{E}(u;B_{32\rho}(\overline{z_0}))+\mathcal{E}(u-w;B_{32\rho}(\overline{z_0}))\leq c\lambda
\end{align*}
for some constant $c=c(n,s,p)$.
For $\epsilon_1>0$, by applying Lemma \ref{lem.comprei1}, there is a function $v\in W^{s,p}(B_{16\rho}(z_0))\cap L^{p-1}_{sp}(\bbR^n)$ such that 
\begin{equation*}
\left(\dashint_{B_{4\rho}(\overline{z_0})\cap \Omega}|(w-v)/d_\Omega^s|^{p-1}\,dx\right)^{1/(p-1)}\leq c\epsilon_1\lambda\quad\text{and}\quad \|v/d_\Omega^s\|_{L^\infty(B_{4\rho}(\overline{z_0})\cap \Omega)}\leq c\lambda
\end{equation*}
for some constant $c=c(n,s,p)$, whenever $\delta=\delta(n,s,p,\epsilon_1)>0$ is sufficiently small. Using this in combination with \eqref{ineq5.measep}, we get
\begin{align*}
    \left(\dashint_{B_{4\rho}(\overline{z_0})\cap\Omega}|(u-v)/d_\Omega^s|^{p-1}\,dx\right)^{1/(p-1)}\leq c(\delta^{p/2}+\epsilon_1)\lambda\quad\text{and}\quad \|v/d_\Omega^s\|_{L^\infty(B_{4\rho}(\overline{z_0})\cap \Omega)}\leq c\lambda
\end{align*}
for some constant $c=c(n,s,p)$. Since $B_{\rho}(z)\subset B_{4\rho}(\overline{z_0})$, we have 
\begin{align*}
    \left(\dashint_{B_{\rho}(z)\cap\Omega}|(u-v)/d_\Omega^s|^{p-1}\,dx\right)^{1/(p-1)}\leq c(\delta^{p/2}+\epsilon_1)\lambda\quad\text{and}\quad \|v/d_\Omega^s\|_{L^\infty(B_{\rho}(z)\cap \Omega)}\leq c\lambda.
\end{align*}
Similarly as in the proof of \eqref{ineq4.measep}, we have that if $N \ge N_1$ for a sufficiently large constant $N_{1} = N_1(n,s,p) \ge 1$, then 
\begin{align*}
    \mathcal{C}\cap B_{\rho/8}(z)\subset \{x\in B_{\rho/8}(z)\cap \Omega\,:\, M^{\Omega}_{\rho/16}(|(u-w)/d_\Omega^s|^{p-1})(z)>\lambda^{p-1}\}.
\end{align*} 
Therefore, using weak 1-1 estimates, we get 
\begin{align}\label{goal2.measep}
    |\mathcal{C}\cap B_{\rho/8}(z)|\leq c(\delta^{(p-1)p/2}+\epsilon_1^{p-1})|B_{\rho/8}|\leq \epsilon|B_{\rho/8}|
\end{align}
by taking $\varepsilon_{1} = \varepsilon_{1}(n,s,p,\varepsilon) > 0$ and then $\delta=\delta(n,s,p,\epsilon)>0$ sufficiently small. 
Combining \eqref{goal1.measep} and \eqref{goal2.measep}, we conclude with \eqref{goal0.measep}.
\end{proof}
In light of Lemma \ref{lem.measep}, we have the following:
\begin{corollary}\label{cor.measep}
    Given $\epsilon>0$, there exists a small constant $\delta=\delta(n,s,p,\epsilon)>0$ such that if $\Omega$ is $(\delta,1)$-Reifenberg flat, then for all $\lambda\geq\lambda_1$ and nonnegative integer $k$, 
    \begin{align*}
        &\left|\left\{z\in\Omega\cap B_{1/4}\,:\,M^{\Omega}_{2^{-6}}(|u/d_\Omega^s|^{p-1})(z)>(N^k\lambda)^{p-1}\right\}\right|\\
        &\leq (c\epsilon)^k\left|\left\{z\in\Omega\cap B_{1/4}\,:\,M^{\Omega}_{2^{-6}}(|u_\Omega/d^s|^{p-1})(z)>\lambda^{p-1}\right\}\right|\\
        &\quad+\sum_{i=1}^k(c\epsilon)^i\left|\left\{z\in\Omega\cap B_{1/4}\,:\,M^{\Omega}_{sp_*,2^{-6}}(|f|^{p_*})(z)>(N^{k-i}\delta\lambda)^{{p_*}{(p-1)}}\right\}\right|
    \end{align*}
    holds, where $c=c(n,s,p)$ and the constant $N$ is determined in Lemma \ref{lem.measep}.
\end{corollary}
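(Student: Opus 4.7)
\textit{Proposal.} The proof is by induction on $k$. The base case $k=0$ reduces to the identity $|\mathcal{C}_\lambda|\leq|\mathcal{C}_\lambda|$. The inductive step reduces to the \emph{one-step good-$\lambda$ inequality}: for every $\mu\geq \lambda_1$,
\begin{equation*}
    |\mathcal{C}_{N\mu}|\;\leq\;c\varepsilon\bigl(|\mathcal{C}_{\mu}|+|\mathcal{F}_{\mu}|\bigr),
\end{equation*}
where I abbreviate
$\mathcal{C}_{\mu}\coloneqq\{z\in\Omega\cap B_{1/4}:M^{\Omega}_{2^{-6}}(|u/d_{\Omega}^{s}|^{p-1})(z)>\mu^{p-1}\}$ and $\mathcal{F}_{\mu}\coloneqq\{z\in\Omega\cap B_{1/4}:M^{\Omega}_{sp_{*},2^{-6}}(|f|^{p_{*}})(z)>(\delta\mu)^{p_{*}(p-1)}\}$. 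Granting this, the $k$-th step follows by applying it with $\mu=N^{k-1}\lambda$, invoking the induction hypothesis to bound $|\mathcal{C}_{N^{k-1}\lambda}|$, and reindexing the resulting sum by $i\mapsto i-1$.

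The one-step inequality is established by a modified Vitali covering in the spirit of Caffarelli--Peral \cite{CafPer98}. After reducing to the smallness regime $|\mathcal{C}_{N\mu}|\leq\varepsilon|B_{1/4}|$ (obtained for $\mu\geq\lambda_{1}$ via a weak-type $(1,1)$ bound for $M^{\Omega}_{2^{-6}}$ together with the control of $\|u/d_{\Omega}^{s}\|_{L^{p-1}(\Omega\cap B_{1/2})}$ by $\lambda_{1}$ afforded by Lemma~\ref{lem.ene} and Lemma~\ref{lem.hardy}), for each Lebesgue density point $z\in\mathcal{C}_{N\mu}$ the map $\rho\mapsto |\mathcal{C}_{N\mu}\cap B_{\rho/8}(z)|/|B_{\rho/8}|$ is continuous, tends to $1$ as $\rho\to 0^{+}$, and is $\leq\varepsilon$ for $\rho$ near $2^{-20}$. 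Thus there is a largest stopping radius $\rho(z)\in(0,2^{-20}]$ at which this density equals $\varepsilon$; at strictly larger scales it is strictly less than $\varepsilon$. Applying the contrapositive of Lemma~\ref{lem.measep} at scales slightly below $\rho(z)$ (where the density exceeds $\varepsilon$) yields $\Omega\cap B_{\rho(z)/8}(z)\subset\mathcal{C}_{\mu}\cup\mathcal{F}_{\mu}$, valid because the $(\delta,1)$-Reifenberg flatness of $\Omega$ automatically implies $(\delta,64\rho(z))$-Reifenberg flatness for every $\rho(z)\leq 2^{-20}$.

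Extracting a disjoint subfamily $\{B_{\rho_{i}/8}(z_{i})\}$ via the $5r$-Vitali lemma so that $\mathcal{C}_{N\mu}\subset\bigcup_{i}\overline{B_{5\rho_{i}/8}(z_{i})}$ up to a null set, I combine two ingredients: first, maximality of $\rho_{i}$ gives $|\mathcal{C}_{N\mu}\cap B_{5\rho_{i}/8}(z_{i})|\leq\varepsilon|B_{5\rho_{i}/8}|$; second, $\Omega\cap B_{\rho_{i}/8}(z_{i})\subset\mathcal{C}_{\mu}\cup\mathcal{F}_{\mu}$ together with the Reifenberg measure density \eqref{cond.meas} forces $|B_{\rho_{i}/8}|\leq 4^{n}|(\mathcal{C}_{\mu}\cup\mathcal{F}_{\mu})\cap B_{\rho_{i}/8}(z_{i})|$. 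Summing over the disjoint family,
\begin{equation*}
    |\mathcal{C}_{N\mu}|\leq\sum_{i}|\mathcal{C}_{N\mu}\cap B_{5\rho_{i}/8}(z_{i})|\leq 5^{n}\varepsilon\sum_{i}|B_{\rho_{i}/8}|\leq 5^{n}4^{n}\varepsilon\,|\mathcal{C}_{\mu}\cup\mathcal{F}_{\mu}|,
\end{equation*}
which yields the one-step inequality. The principal technical point is the scale bookkeeping in the stopping construction, specifically ensuring $5\rho_{i}/8\leq 2^{-20}$ so that the enlarged-scale density bound is meaningful; this is handled by trimming stopping radii to $\rho(z)\in(0,2^{-23}]$ and absorbing constants, noting that the dependence $\delta=\delta(n,s,p,\varepsilon)$ from Lemma~\ref{lem.measep} is uniform across all admissible stopping scales.
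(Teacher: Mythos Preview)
Your approach is exactly the one the paper intends: the paper's own proof simply observes that $(\delta,1)$-Reifenberg flatness implies $(\delta,64\rho)$-Reifenberg flatness for every $\rho\leq 2^{-20}$, then cites \cite[Corollary 4.6]{ByuKimKum25} for the Caffarelli--Peral covering argument and induction on $k$. Your sketch reproduces precisely that scheme.

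Two small remarks. First, the control of $\|u/d_\Omega^s\|_{L^{p-1}(\Omega\cap B_{1/2})}$ by $\lambda_1$ is immediate from the very definition of $\lambda_1=\mathcal{E}_\Omega(u;B_{3/2})+\ldots$ in \eqref{defn.lambda1}; the detour through Lemmas~\ref{lem.ene} and~\ref{lem.hardy} is unnecessary. Second, your handling of the initial smallness is not quite airtight: the weak $(1,1)$ bound gives only $|\mathcal{C}_{N\mu}|\leq c(n)/N^{p-1}$, and since $N=N(n,s,p)$ is fixed by Lemma~\ref{lem.measep} this does not deliver $|\mathcal{C}_{N\mu}|\leq\varepsilon|B_{2^{-23}}|$ for arbitrarily small $\varepsilon$, which is what the stopping construction actually requires at the top scale. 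Your proposed fix of ``trimming to $\rho(z)\in(0,2^{-23}]$'' does not resolve this either, because if the density never drops below $\varepsilon$ the clamped radius gives no information about the enlarged ball $B_{5\rho(z)/8}$. This is a known delicate point in the Byun--Wang covering lemma; the reference \cite{ByuKimKum25} (and the dyadic-cube versions going back to \cite{ByuWan04}) handle it by a formulation in which passing to the parent scale stays within the admissible range, so no separate initial-smallness hypothesis at the tiny scale $2^{-23}$ is needed. It would be cleaner to invoke that formulation directly rather than rederive it with balls and the $5r$-lemma.
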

\begin{proof}
    Since $\Omega$ is $(\delta,1)$-Reifenberg flat, it is also $(\delta,64\rho)$-Reifenberg flat for any $\rho\leq 2^{-20}$. Once we have Lemma \ref{lem.measep}, we can follow the proof of \cite[Corollary 4.6]{ByuKimKum25}, with $|u/d^s|$ and $|f|^{2_*}$ replaced by $|u/d_\Omega^s|^{p-1}$ and $|f|^{p_*}$, respectively, in order to get the desired estimate when $\delta=\delta(n,s,p,\epsilon)>0$ is sufficiently small.
\end{proof}
With the above corollary at hand, we are now ready to prove Theorem \ref{thm.lp}.
\begin{proof}[Proof of Theorem \ref{thm.lp}.]
    By Lemma \ref{lem.scale} and Lemma \ref{lem.rei}, we may assume $R_0=1$. Let $\epsilon>0$ be a free parameter which will be determined later. We set
    \begin{equation*}
        \lambda\coloneqq \mathcal{E}_\Omega(u;B_{3/2})+\left(\dashint_{\Omega\cap B_{3/2}}|f|^q\,dx\right)^{\frac{1}{q(p-1)}}\geq \lambda_1,
    \end{equation*}
    where $\lambda_1$ is given in \eqref{defn.lambda1}. Then, with the constants
    $N=N(n,s,p)\geq1$ and $\delta = \delta(n,s,p,\epsilon)\in(0,1)$ determined in Lemma \ref{lem.measep}, we proceed as in the proof of \cite[Lemma 4.7]{ByuKimKum25}, together with Corollary \ref{cor.measep}, to see that if $0<\epsilon\leq \epsilon_1$ for some small constant $\epsilon_1=\epsilon_1(n,s,p)>0$, then 
    \begin{equation}\label{ineq1.thmlp}
    \begin{aligned}
        &\sum_{i=1}^{l}(N^k\lambda)^{\frac{nq(p-1)}{n-sq}}\left|\left\{z\in\Omega\cap B_{1/4}\,:\,M^{\Omega}_{2^{-6}}(|u/d_\Omega^s|^{p-1})(z)>(N^k\lambda)^{p-1}\right\}\right|\\
        &\leq \sum_{k=1}^{l}\sum_{i=1}^k(N^{k-1}\lambda)^{\frac{nq(p-1)}{n-sq}}(c\epsilon)^i\left|\left\{z\in\Omega\cap B_{1/4}\,:\,M^{\Omega}_{sp_*,2^{-6}}(|f|^{p_*})(z)>(N^{k-i}\delta\lambda)^{p_*(p-1)}\right\}\right|\eqqcolon J
    \end{aligned}
    \end{equation}
    for some constant $c=c(n,s,p)$, whenever $\delta=\delta(n,s,p,\epsilon)>0$ is sufficiently small. By \cite[Lemma 7.3]{CacCab95} and \eqref{ineq.maxfrac}, we estimate $J$ as 
    \begin{equation}\label{ineq2.thmlp}
    \begin{aligned}
        J&\leq \sum_{i=1}^{l}\left(c\epsilon N^{\frac{nq(p-1)}{n-sq}}\right)^i\sum_{k=1}^l(N^{k-i}\lambda)^{\frac{nq(p-1)}{n-sq}}\\
        &\quad\quad\qquad\quad \times\left|\left\{z\in\Omega\cap B_{1/4}\,:\,M^{\Omega}_{sp_*,2^{-6}}(|f/(\delta\lambda)^{p-1}|^{p_*})(z)>N^{p_*(p-1)(k-i)}\right\}\right|\\
        &\leq \lambda^{\frac{nq(p-1)}{n-sq}}\sum_{i=1}^{l}\left(c\epsilon N^{\frac{nq(p-1)}{n-sq}}\right)^i\left(\left\|M^\Omega_{sp_*,2^{-6}}(|f/(\delta\lambda)^{p-1}|^{p_*})\right\|_{L^{\frac{nq}{p_*(n-sq)}}(\Omega\cap B_{1/4})}+1\right)\\
        &\leq c\lambda^{\frac{nq(p-1)}{n-sq}}(\|f/(\lambda\delta)^{p-1}\|_{L^{q}(\Omega\cap B_{1/2})}+1) 
    \end{aligned}
    \end{equation}
    for some constant $c=c(n,s,p,q)$, where we have chosen $\epsilon=\epsilon(n,s,p,q)>0$ sufficiently small to get $c\epsilon N^{\frac{nq(p-1)}{n-sq}}<1/2$. Since $\epsilon=\epsilon(n,s,p,q)$ is now fixed, so is $\delta=\delta(n,s,p,q)$. We now employ \cite[Lemma 7.3]{CacCab95} once again to derive
    \begin{equation}\label{ineq3.thmlp}
    \begin{aligned}
        & \||u/(\lambda d_\Omega^s)|^{p-1}\|_{L^{\frac{nq}{(n-sq)}}(\Omega\cap B_{1/8})} \leq \|M^\Omega_{2^{-6}}(|u/(\lambda d_\Omega^s)|^{p-1})\|_{L^{\frac{nq}{(n-sq)}}(\Omega\cap B_{1/8})}\\
        &\leq c\sum_{i=1}^{\infty}N^{k(p-1)\frac{nq}{n-sq}}\left|\left\{z\in\Omega\cap B_{1/4}\,:\,M^{\Omega}_{2^{-6}}(|u/d_\Omega^s|^{p-1})(z)>(N^k\lambda)^{p-1}\right\}\right|.
    \end{aligned}
    \end{equation}
    Combining \eqref{ineq1.thmlp}, \eqref{ineq2.thmlp} and \eqref{ineq3.thmlp} yields 
    \begin{align*}
         \||u/(\lambda d_\Omega^s)|^{p-1}\|_{L^{\frac{nq}{(n-sq)}}(\Omega\cap B_{1/8})}\leq c(\|f/\lambda^{p-1}\|_{L^{q}(\Omega\cap B_{1/2})}+1) \leq c
    \end{align*}
    for some constant $c=c(n,s,p,q)$, where we have also used the fact that $\delta=\delta(n,s,p,q)$ and $\lambda^{p-1}\geq \|f\|_{L^{q}(\Omega\cap B_{1/2})}$.
    Therefore, we have 
    \begin{align*}
         \|u/d_\Omega^s\|_{L^{\frac{nq(p-1)}{n-sq}}(\Omega\cap B_{1/8})}\leq c\lambda \leq c\mathcal{E}_\Omega(u;B_1)+c\left(\dashint_{\Omega\cap B_1}|f|^q\,dx\right)^{\frac{1}{q(p-1)}}
    \end{align*}
    for some constant $c=c(n,s,p,q)$. Lastly, we show that
    \begin{align}\label{ineq4.thmlp}
        \mathcal{E}_\Omega(u;B_1)\leq c\widetilde{E}(u;B_{3/2})
    \end{align}
    for some constant $c=c(n,s,p)$. If $B_{9/8}\subset \Omega$, then $d_\Omega(x)\geq c$ and \eqref{ineq4.thmlp} directly follows. Suppose $B_{9/8}\cap (\bbR^n\setminus \Omega)\neq\emptyset$, then by Lemma \ref{lem.hardy}, we have
    \begin{align*}
        \int_{\Omega\cap B_{5/4}}|u/d_\Omega^s|^{p}\,dx\leq c[u]^p_{W^{s,p}(B_{11/8})}\leq c\left(\widetilde{E}(u;B_{3/2})^p+\|f\|_{L^{p_*}(B_{3/2}\cap \Omega)}^{p/(p-1)}\right)
    \end{align*}
    for some constant $c=c(n,s,p)$, where we have used Lemma \ref{lem.ene}. This implies \eqref{ineq4.thmlp}, which in turn leads to
    \begin{align*}
         \|u/d_\Omega^s\|_{L^{\frac{nq(p-1)}{n-sq}}(\Omega\cap B_{1/8})}\leq c\left(\widetilde{E}(u;B_{3/2})+\|f\|^{1/(p-1)}_{L^{q}(B_{3/2}\cap \Omega)}\right),
    \end{align*}
    where $c=c(n,s,p,q)$. A standard covering argument finally gives the desired estimate.
\end{proof}

\section{Sharp boundary regularity in nonsmooth domains}\label{sec.sob}
In this section, we prove higher regularity of weak solutions up to the boundary. 
We start with the following lemma, %which shows that the distance function $d^{-\epsilon}_\Omega$ is integrable if the domain is sufficiently flat for $\epsilon\in(0,1)$. 
which shows that the size of the $r$-neighborhood of $\partial\Omega$ has a good decay.
\begin{lemma}\label{lem.distest}
    For any $\alpha\in(0,1)$, there is a sufficiently small $\delta=\delta(n,\alpha)>0$ such that if  $\Omega$ is $(\delta,1)$-Reifenberg flat, then 
    \begin{align*}
        |\{x\in B_{1}\cap \Omega\,:\,d_\Omega(x)< r\}|\leq cr^\alpha
    \end{align*}
    holds whenever $r\in(0,1]$, where $c=c(n,\alpha)$.
\end{lemma}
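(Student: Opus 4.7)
The plan is to iterate the one-step decay estimate from Lemma \ref{lem.reidist}. Writing $D_\rho\coloneqq\{x\in\Omega\,:\,d_\Omega(x)\le\rho\}$ as in that lemma, I set $r_0\coloneqq 2^{-10}$, $r_k\coloneqq\delta^k r_0$, and $a_k\coloneqq |D_{r_k}\cap B_1|$. Assuming $\delta\in(0,2^{-n}]$, Lemma \ref{lem.reidist} gives the recursion
\begin{equation*}
    a_{k+1}\le c_0\bigl(\delta\,a_k+r_k\bigr),\qquad k\ge 0,
\end{equation*}
for a dimensional constant $c_0=c_0(n)$.

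The idea is now to normalize by the target power $r_k^\alpha$. Setting $b_k\coloneqq a_k/r_k^\alpha$, the recursion becomes
\begin{equation*}
    b_{k+1}\le c_0\delta^{1-\alpha}\,b_k+c_0\delta^{-\alpha}r_k^{1-\alpha}\le c_0\delta^{1-\alpha}\,b_k+c_0\delta^{-\alpha}r_0^{1-\alpha},
\end{equation*}
where the last inequality uses $r_k\le r_0<1$ and $1-\alpha>0$. Choose $\delta=\delta(n,\alpha)>0$ so small that $\delta\le 2^{-n}$ and $c_0\delta^{1-\alpha}\le 1/2$; then this is a contraction-with-bounded-forcing iteration, and a standard induction yields
\begin{equation*}
    b_k\le b_0+2c_0\delta^{-\alpha}r_0^{1-\alpha}\le C(n,\alpha)\quad\text{for all }k\ge 0,
\end{equation*}
since $b_0\le |B_1|/r_0^\alpha$ also depends only on $n,\alpha$. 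Equivalently, $a_k\le C r_k^\alpha$.

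To pass from the geometric scales $r_k$ to arbitrary $r\in(0,1]$, I split into two regimes. For $r\in(0,r_0]$, pick the unique $k\ge 0$ with $r_{k+1}<r\le r_k$; monotonicity of $\rho\mapsto|D_\rho\cap B_1|$ together with $r_k=\delta^{-1}r_{k+1}<\delta^{-1}r$ gives
\begin{equation*}
    |\{x\in B_1\cap\Omega\,:\,d_\Omega(x)<r\}|\le a_k\le Cr_k^\alpha\le C\delta^{-\alpha}r^\alpha,
\end{equation*}
which is acceptable since $\delta=\delta(n,\alpha)$. For $r\in(r_0,1]$ the bound is trivial: $|B_1\cap\Omega|\le|B_1|\le|B_1|r_0^{-\alpha}r^\alpha$. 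Absorbing the two constants yields the claim.

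There is no real obstacle here once Lemma \ref{lem.reidist} is in hand; the only subtle point is making sure that the smallness condition $c_0\delta^{1-\alpha}\le 1/2$ needed for the contraction depends solely on $n$ and $\alpha$ and is compatible with $\delta\le 2^{-n}$, which is automatic as $\alpha<1$ and $c_0$ is dimensional. I note that the requirement $\alpha<1$ is essential: at $\alpha=1$ the recursion loses its contractive character and one cannot hope for a linear decay of $|D_r\cap B_1|$ in mere Reifenberg-flat domains, consistent with the pathologies of such domains mentioned in the introduction.
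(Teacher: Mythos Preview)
Your proof is correct and follows essentially the same approach as the paper: both iterate Lemma~\ref{lem.reidist}, normalize by $r^\alpha$, and choose $\delta$ so that $c\delta^{1-\alpha}\le 1/2$ to obtain a contraction. The only cosmetic difference is that the paper packages the iteration via the supremum $N_{\rho_0}=\sup_{\rho_0\le\rho\le 2^{-20}\delta}g(\rho)/\rho^\alpha$ and an absorption inequality, whereas you run the recursion directly along the geometric sequence $r_k=\delta^k r_0$ and then interpolate; the substance is identical.
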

\begin{proof}
    It suffices to prove the lemma when $r\in(0,2^{-20}]$. Let us define 
    \begin{equation*}
        g(r)\coloneqq|\{x\in B_{1}\cap \Omega\,:\,d_\Omega(x)< r\}|\quad\text{for any }r\in(0,2^{-20}].
    \end{equation*}
    Then Lemma \ref{lem.reidist} implies
    \begin{align*}
        g(\delta r)\leq c\delta g(r)+cr
    \end{align*}
    for some constant $c=c(n)$. Dividing both sides by $(\delta r)^{\alpha}$, we get
    \begin{align*}
        \frac{g(\delta r)}{(\delta r)^\alpha}\leq c\delta^{1-\alpha}\frac{g(r)}{r^\alpha}+c\delta^{-\alpha}r^{1-\alpha}.
    \end{align*}
    By taking $\delta=\delta(n,\alpha)>0$ sufficiently small, we obtain 
    \begin{align*}
        \frac{g(\delta r)}{(\delta r)^\alpha} \leq \frac{1}{4}\frac{g(r)}{r^\alpha}+cr^{1-\alpha}
    \end{align*}
    for some constant $c=c(n,\alpha)$.
% Iterating the above inequality, we have
% \[ \frac{g(\delta^i r)}{(\delta^i r)^{\alpha}} \le \frac{1}{4^i}\frac{g(r)}{r^\alpha} + cr^{1-\alpha} \]
% for any $i \in \mathbb{N}$. 
% Now, for any $\rho \in (0,2^{-20}]$, there exists a unique $i \in \mathbb{N}$ such that $\delta^{i}r < \rho \le \delta^{i-1}r$. From this fact and the above display, it follows that
% \[ \frac{g(\rho)}{\rho^{\alpha}} \le c\left(\frac{\rho}{r}\right)^{\beta}\frac{g(r)}{r^{\alpha}} + cr^{1-\alpha} \]
% holds whenever $0<\rho < r \le 2^{-20}$, where $c=c(n,\alpha) \ge 1$ and $\beta = \beta(n,\alpha) > 1-\alpha$. A standard iteration lemma (see for instance ) finally yields the desired estimate.
    Now let us define 
    \begin{align*}
        N_{\rho_0}\coloneqq \sup_{\rho_0 \le \rho\leq 2^{-20}\delta}\frac{g(\rho)}{\rho^\alpha}
    \end{align*}
    for some $\rho_0<2^{-20}\delta$. Then we derive
    \begin{align*}
        N_{\rho_0}\leq \frac12 N_{\rho_0}+c
    \end{align*}
    for some constant $c=c(n,\alpha)$, as $\delta=\delta(n,\alpha)$ is fixed and  $g(\rho)/\rho^{\alpha}\leq c(n,\alpha)$ for any $\rho\in[2^{-20}\delta,1]$. Thus, by taking $\rho_0\to 0$, we have
    \begin{equation}\label{ineq1.distest}
        \sup_{0< \rho\leq 2^{-20}\delta}\frac{g(\rho)}{\rho^\alpha}\leq c.
    \end{equation}
    Since the constant $\delta=\delta(n,\alpha)$ is fixed, the desired estimate directly follows from \eqref{ineq1.distest}.
\end{proof}

Now using Theorem \ref{thm.lp} and Lemma \ref{lem.distest}, we improve the estimate given in Theorem \ref{thm.lp}.
\begin{lemma}\label{lem.basic1}
    Let $u$ be a weak solution to \eqref{eq.diri}, 
%     \begin{equation*}
% \left\{
% \begin{alignedat}{3}
% (-\Delta_p)^s{u}&= f&&\qquad \mbox{in  $\Omega \cap B_{2}$}, \\
% {u}&=0&&\qquad  \mbox{in $B_{2}\setminus \Omega$},
% \end{alignedat} \right.
% \end{equation*}
where $f\in L^{n/s}(\Omega\cap B_2)$. Let us fix $\sigma\in(s,s+1/\gamma)$, where $\gamma\in[1,\infty)$. Then there is a sufficiently small $\delta=\delta(n,s,p,\sigma,\gamma)>0$ such that if $\Omega$ is $(\delta,2)$-Reifenberg flat, then we have 
\begin{align*}
    \|u/d_\Omega^\sigma\|_{L^{\gamma}(\Omega\cap B_{1/2})}\leq c\left(\widetilde{E}(u;B_{1})+\|f\|^{1/(p-1)}_{L^{n/s}(\Omega\cap B_{1})}\right)
\end{align*}
for some constant $c=c(n,s,\Lambda,\sigma, \gamma)$. 
\end{lemma}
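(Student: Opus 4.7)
The plan is to decompose $u/d_\Omega^\sigma = (u/d_\Omega^s)\cdot d_\Omega^{-(\sigma-s)}$ and apply H\"older's inequality, interpolating between the high $L^q$-integrability of $u/d_\Omega^s$ furnished by Theorem \ref{thm.lp} and the integrability of negative powers of the distance function furnished by Lemma \ref{lem.distest}. Since the hypothesis $\sigma < s+1/\gamma$ gives $(\sigma-s)\gamma < 1$, I can fix $a>1$ such that $\beta \coloneqq (\sigma-s)\gamma a <1$, and set $a' \coloneqq a/(a-1)$. Then H\"older's inequality yields
\[
\int_{\Omega\cap B_{1/2}}\!\left|\frac{u}{d_\Omega^\sigma}\right|^{\gamma}dx
\;\leq\;\left(\int_{\Omega\cap B_{1/2}}\!\left|\frac{u}{d_\Omega^s}\right|^{\gamma a'}dx\right)^{1/a'}\left(\int_{\Omega\cap B_{1/2}} d_\Omega^{-\beta}\,dx\right)^{1/a}.
\]

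For the second factor, the layer-cake formula reduces the matter to bounding $\int_0^\infty r^{-\beta-1}|\{x\in\Omega\cap B_{1/2}: d_\Omega(x)<r\}|\,dr$. The tail $r\geq 1$ contributes at most a dimensional constant times $|B_{1/2}|$, while on $r\in(0,1)$ I apply Lemma \ref{lem.distest} with a fixed $\alpha\in(\beta,1)$ to get $|\{d_\Omega<r\}\cap B_{1}|\leq cr^{\alpha}$, which makes $\int_0^1 r^{\alpha-\beta-1}\,dr$ finite. This step requires $\Omega$ to be $(\delta_1,2)$-Reifenberg flat with $\delta_1=\delta_1(n,\alpha)$ sufficiently small, and yields a bound depending only on $n,\sigma,s,\gamma$.

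For the first factor, note that $nq(p-1)/(n-sq)\to\infty$ as $q\nearrow n/s$, so I can pick $q\in[p_*,n/s)$ with $nq(p-1)/(n-sq)\geq \gamma a'$. After a scaling via Lemma \ref{lem.scale} to bring Theorem \ref{thm.lp} (stated from $B_2$-data to $B_1$-estimate) into a $B_1$-data to $B_{1/2}$-estimate form, and a further H\"older inequality to dominate $\|f\|_{L^q(\Omega\cap B_1)}$ by $\|f\|_{L^{n/s}(\Omega\cap B_1)}$ on the bounded set, I obtain
\[
\|u/d_\Omega^s\|_{L^{\gamma a'}(\Omega\cap B_{1/2})}\;\leq\; c\left(\widetilde{E}(u;B_1)+\|f\|_{L^{n/s}(\Omega\cap B_1)}^{1/(p-1)}\right),
\]
provided $\Omega$ is $(\delta_2,2)$-Reifenberg flat with $\delta_2=\delta_2(n,s,p,q)$ small. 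Taking $\delta\coloneqq\min\{\delta_1,\delta_2\}$, which depends on $n,s,p,\sigma,\gamma$, and combining the two bounds gives the claim.

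The main technical point is the finiteness of $\int d_\Omega^{-\beta}$ for $\beta<1$. For Lipschitz domains this would follow from the coarea formula applied to $d_\Omega$, but for a general Reifenberg flat $\Omega$ the boundary need not even be rectifiable (see \cite[Remark 2.2]{LewNysVog13}); the argument therefore relies in an essential way on the quantitative decay estimate in Lemma \ref{lem.distest}.
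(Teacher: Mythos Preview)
Your proof is correct and follows essentially the same route as the paper: split $|u/d_\Omega^\sigma|^\gamma=|u/d_\Omega^s|^\gamma\,d_\Omega^{-(\sigma-s)\gamma}$, apply H\"older's inequality, control the $u/d_\Omega^s$ factor via Theorem~\ref{thm.lp} with $q$ close to $n/s$, and control the $d_\Omega^{-\beta}$ factor (for $\beta<1$) via the layer-cake formula combined with Lemma~\ref{lem.distest}. The paper makes a specific choice $\tilde q=2\gamma/(1-\gamma(\sigma-s))$ for the H\"older exponent, but your formulation with a generic $a>1$ satisfying $(\sigma-s)\gamma a<1$ is equivalent.
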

\begin{proof}
We first prove that for any $\epsilon\in(0,1)$, 
\begin{align}\label{ineq1.basic1}
    \int_{\Omega\cap B_{1/2}}\frac{1}{d^{\epsilon}_\Omega(x)}\,dx=\int_{0}^{\infty}|\{x\in\Omega\cap B_{1/2}\,:\,1/d_\Omega^\epsilon(x)>t\}|\,dt\leq c
\end{align}
holds for some constant $c=c(n,\epsilon)$, provided that $\delta=\delta(n,\epsilon)>0$ is sufficiently small. 
By Lemma \ref{lem.distest}, for any $r\in(0,1]$, we have
\begin{align}\label{ineq0.basic1}
    |\{x\in\Omega\cap B_{1/2}\,:\,d_\Omega(x)<r\}|\leq cr^{(\epsilon+1)/2}
\end{align}
for some constant $c=c(n,\epsilon)$, whenever $\delta=\delta(n,\epsilon)$ is sufficiently small. 
We now split as follows:
\begin{align*}
    \int_{0}^{\infty}|\{x\in\Omega\cap B_{1/2}\,:\,1/d_\Omega^\epsilon(x)>t\}|\,dt&=\int_{0}^{\infty}|\{x\in\Omega\cap B_{1/2}\,:\,1/t^{1/\epsilon}>d_\Omega(x)\}|\,dt\\
    &=\int_{0}^{1}(\cdots)\,dt+\int_{1}^{\infty}(\cdots)\,dt\eqqcolon J_1+J_2.
\end{align*}
We directly have $J_1\leq |B_{1/2}|$. As for $J_2$, since $1/t^{1/\epsilon}\in(0,1]$ when $t\geq1$, \eqref{ineq0.basic1} and the fact that $\frac{(\epsilon+1)}{2\epsilon}>1$ imply
\begin{align*}
    J_2\leq c\int_{1}^\infty (1/t)^{\frac{(\epsilon+1)}{2\epsilon}}\,dt\leq c(n,\epsilon).
\end{align*}
Thus we get \eqref{ineq1.basic1}.

We now choose $\tilde{q}\coloneqq \frac{2\gamma}{1-\gamma(\sigma-s)}$. Then, using H\"older's inequality and \eqref{ineq1.basic1} with $\epsilon=\frac{\gamma \tilde{q}(\sigma-s)}{q-\gamma} \in (0,1)$, we get
\begin{equation}\label{ineq0.siggam}
\begin{aligned}
    \int_{\Omega\cap B_{1/2}}|u/d_\Omega^{\sigma}|^{\gamma}\,dx&=\int_{\Omega\cap B_{1/2}}|u/d_\Omega^{s}|^{\gamma}|1/d_\Omega^{\sigma-s}|^{\gamma}\,dx \\
    &\leq \left(\int_{\Omega\cap B_{1/2}}|u/d_\Omega^{s}|^{\tilde{q}}\,dx\right)^{\gamma/\tilde{q}}\left(\int_{\Omega\cap B_{1/2}}|1/d_\Omega^{\sigma-s}|^{\gamma \tilde{q}/(\tilde{q}-\gamma)}\,dx\right)^{(\tilde{q}-\gamma)/\tilde{q}}\\
    &\leq c\left(\int_{\Omega\cap B_{1/2}}|u/d_\Omega^{s}|^{\tilde{q}}\,dx\right)^{\gamma/\tilde{q}}.
\end{aligned}
\end{equation}
Plugging the estimate given in Theorem \ref{thm.lp} into the last inequality in \eqref{ineq0.siggam}, we get the desired estimate when  $\delta=\delta(n,s,p,\sigma,\gamma)>0$ is sufficiently small. 
\end{proof}

\subsection{Higher Sobolev and H\"older regularity}
In this subsection, we prove boundary higher Sobolev and H\"older estimates for solutions. The proof uses Lemma \ref{lem.distest}, Lemma \ref{lem.basic1}, Theorem \ref{thm.lp} and interior regularity results for fractional $p$-Laplace equations. 

First, we prove Theorem \ref{thm.hig} concerning boundary higher Sobolev regularity.
\begin{proof}[Proof of Theorem \ref{thm.hig}.]
Throughout the proof, we simply denote $d(x)\equiv d_\Omega(x)$.

Without loss of generality, we may assume that $\sigma >s$. 
We fix $\sigma_1\coloneqq (\sigma+s+1/\gamma)/2\in(\sigma,s+1/\gamma)$
and investigate the integral
\begin{align*}
    I\coloneqq \int_{B_{1/8}}\frac{|u(x+h)-u(x)|^{\gamma}}{|h|^{\sigma_1\gamma}}\,dx,
\end{align*}
where $h\in B_{1/1000}\setminus \{0\}$.
To do this, we split as follows:
\begin{align*}
    I& = \sum_{i=1}^{5}\int_{\mathcal{A}_i}\frac{|u(x+h)-u(x)|^{\gamma}}{|h|^{\sigma_1\gamma}}\,dx \eqqcolon \sum_{i=1}^5I_i,
\end{align*}
where we write
\begin{align*}
    &\mathcal{A}_1\coloneqq\{x\in B_{1/8}\cap \Omega\,:\,x+h\in B_{3/16}\cap \Omega\text{ and }5|h|\geq\max\{d(x+h),d(x)\} \},\\
    &\mathcal{A}_2\coloneqq\{x\in B_{1/8}\cap \Omega\,:\,x+h\in B_{3/16}\cap \Omega\text{ and }5|h|<\max\{d(x+h),d(x)\} \},\\
    &\mathcal{A}_3\coloneqq\{x\in B_{1/8}\cap \Omega\,:\,x+h\in B_{3/16}\setminus \Omega\},\\
    &\mathcal{A}_4\coloneqq\{x\in B_{1/8}\setminus \Omega\,:\,x+h\in B_{3/16}\cap \Omega\},\\
    &\mathcal{A}_5\coloneqq\{x\in B_{1/8}\setminus \Omega\,:\,x+h\in B_{3/16}\setminus \Omega\}.
\end{align*}

    \textbf{Step 1: Estimate of $I_1$.} We first consider the term $I_1$. We observe
    \begin{align*}
        I_1&\leq c  \left(\int_{\mathcal{A}_1}\frac{|u(x)|^\gamma}{d^{\sigma_1\gamma}(x)}{\,dx}+\int_{\mathcal{A}_1}\frac{|u(x+h)|^\gamma}{d^{\sigma_1\gamma}(x+h)}{\,dx}\right)\leq c\int_{B_{1/4}\cap \Omega}\frac{|u(x)|^\gamma}{d^{\sigma_1\gamma}(x)}{\,dx},
    \end{align*}
    where we have used the change of variables and the fact that $x+h\in B_{1/4}\cap \Omega$. Lemma \ref{lem.basic1} implies that if $\Omega$ is $(\delta,2)$-Reifenberg flat for sufficiently small $\delta=\delta(n,s,p,q,\gamma,\sigma)>0$, then
     \begin{align}\label{ineqi1.dsregmove}
         I_1\leq  c\left(\widetilde{E}(u;B_{1})+\|f\|^{1/(p-1)}_{L^q(\Omega\cap B_1)}
         \right)^{\gamma}.
     \end{align}
     holds for some constant $c=c(n,s,p,q,\gamma,\sigma)$.
     %{\color{red}We should assume $\sigma \ge s$ to use Lemma \ref{lem.basic1}. Also, H\"older's inequality is used in the last inequality. }
     
    \textbf{Step 2: Estimate of $I_2$.} We next estimate the term $I_2$ by using both interior and boundary regularity results. 
    We may assume $d(x)\geq d(x+h)$, which implies $x+h\in B_{d(x)/4}(x)\subset B_{d(x)/2}(x)\subset\Omega$. Thus, by \cite[Theorem 1.4]{BraLinSch18} and \cite[Theorem 1.2]{GarLin24} together with the fact that $\sigma<\sigma_1<\min\{1,(sp-n/q)/(p-1)\}$, we have 
    \begin{align*}
        \frac{|u(x+h)-u(x)|}{|h|^{\sigma_1}} & \leq [u]_{C^{\sigma_1}\left(B_{{d(x)}/4}(x)\right)} \\&\leq \frac{c}{d^{\sigma_1}(x)}\left(\dashint_{B_{d(x)/2}(x)}|u|^p\,dz\right)^{1/p}+\frac{c}{d^{\sigma_1}(x)}\mathrm{Tail}\left(u;B_{d(x)/2}(x)\right)\\
        &\quad+\frac{c}{d^{\sigma_1}(x)}\left(\dashint_{B_{d(x)/2}(x)}|d^{sp}(x)f|^q \,dz\right)^{\frac1{q(p-1)}}\\
        &\eqqcolon J_1+J_2+J_3,
    \end{align*}
    where $c=c(n,s,p,\sigma_1)$. Before estimating the terms $J_1,J_2$ and $J_3$, we observe
    \begin{align}\label{ineq01.measep}
        d(z)=|z-\overline{z}|\leq |z-\overline{x}|\leq |z-x|+|x-\overline{x}|\leq (2^{k}+1)d(x)
    \end{align}
    for any $z\in B_{2^kd(x)}(x)$ with any integer $k$, where $|z-\overline{z}|=d(z)$ and $|x-\overline{x}|=d(x)$ for some $\overline{z},\overline{x}\in\bbR^n\setminus\Omega$.
        By taking 
        \begin{align}\label{ineq001.measep}
        s_0\coloneqq ({\sigma_1}-1/\gamma+s)/2<s    \quad\text{and}\quad \overline{q}\coloneqq \max\{n/(s-s_0),p\},
        \end{align}
        which implies ${\sigma_1}<s_0+1/\gamma<s+1/\gamma$.
        Thus, we have
        \begin{equation}\label{q.j1}
        \begin{aligned}
            d^{{\sigma_1}-s_0}(x)J_1\leq c\left(\dashint_{B_{d(x)/2}(x)}\left|\frac{u(z)}{d^{s_0}(x)}\right|^p\,dz\right)^{1/p}&\leq c\left(\dashint_{B_{d(x)/2}(x)}\left|\frac{u(z)}{d^{s_0}(z)}\right|^p\,dz\right)^{1/p}\\
            &\leq c\left(\dashint_{B_{d(x)/2}(x)}\left|\frac{u}{d^{s_0}}\right|^{\overline{q}}\,dz\right)^{1/{\overline{q}}}\\
            &\leq c\left(\int_{B_{d(x)/2}(x)}\left|\frac{u}{d^{s}}\right|^{\overline{q}}\,dz\right)^{1/{\overline{q}}},
        \end{aligned}
        \end{equation}
        where we have used H\"older's inequality and the fact that
        \begin{align*}
            d^{-n}(x)d^{-s_0\overline{q}}(z)\leq cd(z)^{-n-s_0\overline{q}} \leq cd^{-s\overline{q}}(z)\quad\text{for any }z\in B_{d(x)/2}(x).
        \end{align*}        
        Therefore, applying Lemma \ref{lem.basic1} to the last term in \eqref{q.j1}, we get 
        \begin{align}\label{ineq0001.measep}
            d^{{\sigma_1}-s_0}(x)J_1\leq \left(\int_{B_{1/4}\cap \Omega}\left|\frac{u}{d^{s}}\right|^{\overline{q}}\,dz\right)^{1/{\overline{q}}}\leq c\left(\widetilde{E}(u;B_{1})+\|f\|^{1/(p-1)}_{L^q(\Omega\cap B_1)}\right)
        \end{align}
        for some constant $c=c(n,s,p,q,\sigma)$, whenever $\delta=\delta(n,s,p,q,\sigma)>0$ is sufficiently small.
        We next estimate the tail term $J_2$ as 
        \begin{align*}
            d^{{\sigma_1}-s_0}(x)J_2&\leq \frac{c}{d^{s_0}(x)}\left[\left(\sum_{i=0}^{l}2^{-spi}\dashint_{B_{2^id(x)}(x)}|u|^{p-1}\,dz\right)^{1/(p-1)}+2^{-\frac{sp}{p-1}l}\mathrm{Tail}(u;B_{2^ld(x)}(x))\right],
    \end{align*}
    where $l$ is the positive integer satisfying
    \begin{align}\label{ineq.dx}
        1/4\leq 2^ld(x)<1/2.
    \end{align}
    Using the fact that $u\equiv 0$ in $B_{1}\setminus \Omega$, \eqref{cond.meas} with $R_0=2$ and \eqref{ineq01.measep}, we obtain
     \begin{align*}
         & \frac1{d^{s_0}(x)}\left(\sum_{i=0}^{l}2^{-spi}\dashint_{B_{2^id(x)}(x)}|u|^{p-1}\,dz\right)^{1/(p-1)} \\
         %&\leq \frac{c}{d^{s_0}(x)}\left(\sum_{i=-1}^{l}2^{-spi}\dashint_{B_{2^id(x)}(x)\cap \Omega}|u|^{p-1}\,dz\right)^{\frac1{p-1}}\\
         &\leq c\left(\sum_{i=0}^{l}2^{-spi}\dashint_{B_{2^id(x)}(x)\cap \Omega}\left|\frac{u(z)}{d^{s_0}(x)}\right|^{p-1}\,dz\right)^{1/(p-1)}\\
         &\leq c\left(\sum_{i=0}^{l}2^{(-sp+(p-1)s_0)i}\dashint_{B_{2^id(x)}(x)\cap \Omega}\left|\frac{u}{d^{s_0}}\right|^{p-1}\,dz\right)^{1/(p-1)} \eqqcolon J_{2,1}
     \end{align*}
     for some constant $c=c(n,s,p)$. We now further estimate 
     \begin{align*}
         J_{2,1}&\leq c\left[\sum_{i=0}^{l}2^{(-sp+(p-1)s_0)i}\left(\dashint_{B_{2^id(x)}(x)\cap \Omega}\left|\frac{u}{d^{s_0}}\right|^{\overline{q}}\,dz\right)^{(p-1)/\overline{q}}\right]^{1/(p-1)}\\
         &\leq c\left[\sum_{i=0}^{l}2^{(-sp+(p-1)s_0)i}\left(\int_{B_{2^id(x)}(x)\cap \Omega}\left|\frac{u}{d^s}\right|^{\overline{q}}\,dz\right)^{(p-1)/\overline{q}}\right]^{1/(p-1)}\\
         &\leq c\left[\sum_{i=0}^{l}2^{(-sp+(p-1)s_0)i}\left(\int_{B_{1/2}\cap \Omega}\left|\frac{u}{d^s}\right|^{\overline{q}}\,dz\right)^{(p-1)/\overline{q}}\right]^{1/(p-1)}
     \end{align*}
     for some constant $c=c(n,s,p)$, where we have used H\"older's inequality and the fact that 
     \begin{align*}
         d^{ -\overline{q}s_0}(z)|B_{2^id(x)}(x)\cap \Omega|^{-1}\leq cd^{ -\overline{q}s_0}(z)(2^id(x))^{-n}\leq cd^{-\overline{q}s_0-n}(z) \leq cd^{-s\overline{q}}(z)
     \end{align*}
     for any $z\in B_{2^id(x)}(x)\cap \Omega$,
    which follows from \eqref{cond.meas}, \eqref{ineq001.measep} and \eqref{ineq01.measep}. Thus, by Lemma \ref{lem.basic1}, we get
    \begin{align}\label{ineq.last.measep}
        \frac1{d^{s_0}(x)}\left(\sum_{i=0}^{l}2^{-spi}\dashint_{B_{2^id(x)}(x)}|u|^{p-1}\,dz\right)^{1/(p-1)}\leq c\left(\widetilde{E}(u;B_{1})+\|f\|^{1/(p-1)}_{L^q(\Omega\cap B_1)}\right),
    \end{align}
    provided that $\delta=\delta(n,s,p,q,\gamma,\sigma)>0$ is sufficiently small.
    On the other hand, using \eqref{ineq.dx}, we get
    \begin{align*}
        \frac1{d^{s_0}(x)}2^{-\frac{sp}{p-1}l}\mathrm{Tail}(u;B_{2^ld(x)}(x))\leq c\widetilde{E}(u;B_1).
    \end{align*}
    Combining this, \eqref{ineq.last.measep} and \eqref{ineq0001.measep}, we get 
    \begin{align*}
        J_1+J_2\leq \frac{c}{d^{\sigma_1-s_0}(x)}\left(\widetilde{E}(u;B_{1})+\|f\|^{1/(p-1)}_{L^q(\Omega\cap B_1)}\right).
    \end{align*}
    In addition, we have 
    \begin{align*}
        J_3\leq c(d(x))^{-\sigma_1+\frac{sp}{p-1}-\frac{n}{q(p-1)}}\|f\|^{1/(p-1)}_{L^q(\Omega\cap B_1)}\leq cd^{s_0 - \sigma_1}(x)\|f\|^{1/(p-1)}_{L^q(\Omega\cap B_1)}
    \end{align*}
    for some constant $c=c(n,s,p,q,\sigma)$, as $\frac{sp}{p-1}-\frac{n}{q(p-1)}>s_0$. 
    Therefore, we have that if $x\in \mathcal{A}_2$ with $d(x)\geq d(x+h)$, then
    \begin{align*}
      \frac{|u(x+h)-u(x)|^{\gamma}}{|h|^{\sigma_{1}\gamma}}\leq c[u]^{\gamma}_{C^{\sigma_1}(B_{d(x)/4}(x))}\leq \frac{c}{d^{(\sigma_1-s_0)\gamma}(x)}\left(\widetilde{E}(u;B_{1})+\|f\|^{1/(p-1)}_{L^q(\Omega\cap B_1)}\right)^{\gamma},
    \end{align*}
    for some $c=c(n,s,p,q,\gamma,\sigma)$. Similarly, we can also show that 
    if $x\in \mathcal{A}_2$ with $d(x+h)\geq d(x)$, then
    \begin{align*}
      \frac{|u(x+h)-u(x)|^{\gamma}}{|h|^{\sigma_{1}\gamma}}\leq c[u]^{\gamma}_{C^{\sigma_1}(B_{d(x+h)/4}(x+h))}
        \leq \frac{c}{d^{(\sigma_1-s_0)\gamma}(x+h)}\left(\widetilde{E}(u;B_{1})+\|f\|^{1/(p-1)}_{L^q(\Omega\cap B_1)}\right)^{\gamma}.
    \end{align*}
    Thus, we get 
    \begin{align*}
        I_2&\leq \int_{\mathcal{A}_2} \frac{|u(x+h)-u(x)|^{\gamma}}{|h|^{\sigma_{1}\gamma}}\,dx\\
        &\leq c\left[\int_{B_{1/4}\cap \Omega}\left(\frac{1}{d^{(\sigma_1-s_0)\gamma}(x+h)}+\frac{1}{d^{(\sigma_1-s_0)\gamma}(x)}\right)\,dx\right]\left(\widetilde{E}(u;B_{1})+\|f\|^{1/(p-1)}_{L^q(\Omega\cap B_1)}\right)^{\gamma} %\\
        %&\leq c\left(\widetilde{E}(u;B_{1})+\|f\|^{1/(p-1)}_{L^q(\Omega\cap B_1)}\right)^{\gamma},
    \end{align*}
    for some constant $c=c(n,s,p,q,\gamma,\sigma)$.
    Now, by \eqref{ineq1.basic1} and the fact that $(\sigma_1 - s_0)\gamma<1$, we have 
    \begin{align}\label{ineqi2.dsregmove}
        I_2\leq c\left(\widetilde{E}(u;B_{1})+\|f\|^{1/(p-1)}_{L^q(\Omega\cap B_1)}\right)^{\gamma}
    \end{align}
    for some constant $c=c(n,s,p,q,\gamma,\sigma)$, whenever $\delta=\delta(n,s,p,q,\gamma,\sigma)>0$ is sufficiently small. 
    
    \textbf{Step 3: Estimates of $I_3$, $I_4$ and $I_5$.} Concerning the terms $I_3$, we observe
    \begin{equation}\label{ineq2.dsregmove}
    \begin{aligned}
        \int_{\mathcal{A}_3}\frac{|u(x+h)-u(x)|^\gamma}{|h|^{\sigma_1\gamma}}\,dx&=\int_{\mathcal{A}_3}\frac{|u(x)|^\gamma}{|h|^{\sigma_{1}\gamma}}\,dx\leq \int_{B_{1/8}\cap\Omega}\frac{|u(x)|^\gamma}{d^{\sigma_{1}\gamma }(x)}\,dx,
    \end{aligned}
    \end{equation}
    where we have used the fact that 
    \begin{align*}
        d(x)=|x-\overline{x}|\leq |x-(x+h)|=|h|
    \end{align*}
    with $\overline{x}\in\bbR^n\setminus\Omega$, as $x+h\in B_{1/4}\setminus \Omega$. 
    Similarly, we have 
    \begin{align*}
         \int_{\mathcal{A}_4}\frac{|u(x+h)-u(x)|^\gamma}{|h|^{\sigma_1\gamma}}\,dx\leq \int_{B_{1/4}\cap\Omega}\frac{|u(x+h)|^\gamma}{d^{\sigma_{1}\gamma }(x+h)}\,dx.
    \end{align*}
    Thus, using Lemma \ref{lem.basic1}, we have
\begin{align}\label{ineqi34.dsregmove}
    \int_{\mathcal{A}_3}\frac{|u(x+h)-u(x)|^\gamma}{|h|^{\sigma_1\gamma}}\,dx+\int_{\mathcal{A}_4}\frac{|u(x+h)-u(x)|^\gamma}{|h|^{\sigma_1\gamma}}\,dx\leq c\left(\widetilde{E}(u;B_1)+\|f\|^{1/(p-1)}_{L^q(\Omega\cap B_1)}\right)^{\gamma}
\end{align}
for some constant $c=c(n,s,p,q,\gamma,\sigma)$, whenever $\delta=\delta(n,s,p,q,\gamma,\sigma)>0$ is sufficiently small.
   Lastly, since $x+h,x\in B_1\setminus \Omega$ for $x \in \mathcal{A}_{5}$, we directly get
    \begin{align}\label{ineqi5.dsregmove}
         \int_{\mathcal{A}_5}\frac{|u(x+h)-u(x)|^\gamma}{|h|^{\sigma_1\gamma}}\,dx=0.
    \end{align}

\textbf{Step 4: Conclusion.} 
Combining all the estimates \eqref{ineqi1.dsregmove}, \eqref{ineqi2.dsregmove}, \eqref{ineqi34.dsregmove} and \eqref{ineqi5.dsregmove}, we have 
\begin{align*}
   I = \int_{B_{1/8}}\frac{|u(x+h)-u(x)|^\gamma}{|h|^{\sigma_1\gamma}}\,dx\leq c\left(\widetilde{E}(u;B_1)+\|f\|^{1/(p-1)}_{L^q(\Omega\cap B_1)}\right)^{\gamma}
\end{align*}
for some constant $c=c(n,s,p,q,\gamma,\sigma)$, whenever $\delta=\delta(n,s,p,q,\gamma,\sigma)>0$ is sufficiently small. Since this holds for any $h\in B_{1/1000}\setminus \{0\}$, an application of \cite[Lemma 2.3]{DieKimLeeNow24} yields
\begin{align}\label{ineqlast.hig}
    [u]_{W^{\sigma,\gamma}(B_{1/16})}\leq c\left(\widetilde{E}(u;B_1)+\|f\|^{1/(p-1)}_{L^q(\Omega\cap B_1)}+\|u\|_{L^\gamma(B_{1/8})}\right)
\end{align}
for some constant $c=c(n,s,p,q,\sigma)$. In light of the fact that $d(x)\leq 2$ for any $x\in\Omega\cap B_1$, Theorem \ref{thm.lp} and H\"older's inequality, we obtain
\begin{align}\label{ineqlast2.hig}
    \|u\|_{L^\gamma(B_{1/8})}\leq c\|u/d^s\|_{L^\gamma(B_{1/8}\cap \Omega)}\leq c\left(\widetilde{E}(u;B_1)+\|f\|^{1/(p-1)}_{L^q(\Omega\cap B_1)}\right)
\end{align}
for some constant $c=c(n,s,p,q,\sigma)$, provided that $\delta=\delta(n,s,p,q,\sigma)>0$ is sufficiently small.
Plugging this into \eqref{ineqlast.hig} and 
using a standard covering argument, we get the desired estimate. 
\end{proof}
We end this subsection with proving Theorem \ref{cor} regarding higher H\"older regularity.
\begin{proof}[Proof of Theorem \ref{cor}]
    Suppose $f\in L^{n/s}(\Omega\cap B_2)$, and let $\alpha\in(0,s)$ be a fixed number. We choose 
    \begin{equation}\label{cond.hol}
        \sigma_1\coloneqq \frac{\alpha+s}{2}\quad\text{and}\quad \gamma\coloneqq\frac{4n}{s-\alpha}.
    \end{equation}
    Estimating in a completely similar way as in the proof of Theorem \ref{thm.hig} with $q=n/s$, but without using Lemma \ref{lem.basic1}, we derive 
    \begin{align*}
        \sup_{h\in B_{1/1000}\setminus \{0\}}\int_{B_{1/8}}\frac{|u(x+h)-u(x)|^\gamma}{|h|^{\sigma_1\gamma}}\,dx\leq c\left(\widetilde{E}(u;B_{1})+\|f\|^{1/(p-1)}_{L^{n/s}(\Omega\cap B_1)}\right)^{\gamma}
    \end{align*}
    for some constant $c=c(n,s,p,\alpha)$, whenever $\delta=\delta(n,s,p,\alpha)>0$ is sufficiently small. We then employ \cite[Lemma 2.3]{DieKimLeeNow24} to see that
    \begin{align*}
        \|u\|_{W^{(5\alpha+3s)/8,\gamma}(B_{1/16})}\leq c\left(\widetilde{E}(u;B_{1})+\|f\|^{1/(p-1)}_{L^{n/s}(\Omega\cap B_1)}\right)
    \end{align*}
     for some constant $c=c(n,s,p,\alpha)$, where  we have also used \eqref{ineqlast2.hig}. Now, by \cite[Theorem 8.2]{DinPalVal12}, along with the fact that $(5\alpha+3s)/8-n/\gamma>\alpha$ which follows from \eqref{cond.hol}, we obtain 
     \begin{align*}
         [u]_{C^\alpha(B_{1/32})}\leq c\left(\widetilde{E}(u;B_{1})+\|f\|^{1/(p-1)}_{L^{n/s}(\Omega\cap B_1)}\right).
     \end{align*}
     By using a standard covering argument, we get the desired estimate.
\end{proof}

\subsection{Calder\'on-Zygmund type estimates} In this subsection, we present Calder\'on-Zygmund type estimates for linear equations on nonsmooth domains.

We first prove an optimal integrability result for a weak solution in terms of the right-hand side.
\begin{lemma}\label{lem.comp.high}
    Assume that $\Omega$ satisfies \eqref{cond.meas} with $R_0 = r$. Let $v\in X^{s,2}_0(B_r(z)\cap \Omega)$ be the weak solution to 
    \begin{equation}\label{eq.high}
\left\{
\begin{alignedat}{3}
(-\Delta)^s{v}&= f&&\qquad \mbox{in  $ B_r(z)\cap \Omega$}, \\
{v}&=0&&\qquad  \mbox{in $\bbR^n\setminus (\Omega\cap B_r(z)) $}
\end{alignedat} \right.
\end{equation}
with $B_{r/2}(z) \cap \Omega \neq \emptyset$. Then for any $q \in [2_{*}, n/(2s))$, there exists a constant $c=c(n,s,q)$ such that
\begin{align*}
    \left(\dashint_{B_r(z)\cap\Omega}|v|^{\frac{nq}{n-2sq}}\,dx\right)^{\frac{n-2sq}{nq}}\leq c\left(\dashint_{B_r(z)\cap \Omega}r^{2sq}|f|^q\,dx\right)^{\frac1q}.
\end{align*}
\end{lemma}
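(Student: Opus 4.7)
My plan is to dominate $|v|$ pointwise by the Riesz potential of $|f|$ extended by zero, and then invoke the Hardy--Littlewood--Sobolev inequality. This route exploits the linearity of $(-\Delta)^s$ and the weak maximum principle, together with the measure density condition \eqref{cond.meas}, and it avoids any construction of a Green function on $\Omega$.

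\textbf{Reduction and Riesz-potential domination.} First I would invoke Lemma~\ref{lem.scale} with $\tilde v(x):=v(rx+z)/r^s$ and $\tilde f(x):=r^s f(rx+z)$ to reduce to the case $r=1$; a direct change of variable shows that the stated averaged inequality, including the factor $r^{2sq}$, rescales to
\begin{equation*}
\left(\dashint_{B_1\cap\Omega_{r,z}}|\tilde v|^{nq/(n-2sq)}\,dx\right)^{(n-2sq)/(nq)}\leq c\left(\dashint_{B_1\cap\Omega_{r,z}}|\tilde f|^q\,dx\right)^{1/q}.
\end{equation*}
Next I extend $\tilde f$ by zero outside $B_1\cap\Omega_{r,z}$ and define
\begin{equation*}
V(x) \coloneqq c_{n,s}\int_{\mathbb{R}^n}\frac{|\tilde f(y)|}{|x-y|^{n-2s}}\,dy,
\end{equation*}
with $c_{n,s}$ chosen so that $(-\Delta)^s V=|\tilde f|$ on $\mathbb{R}^n$. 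Since $V\geq 0$ everywhere and $\tilde v\equiv 0$ outside $B_1\cap\Omega_{r,z}$, both functions $V\pm\tilde v$ are nonnegative in $\mathbb{R}^n\setminus(B_1\cap\Omega_{r,z})$ and satisfy $(-\Delta)^s(V\pm\tilde v)=|\tilde f|\pm\tilde f\geq 0$ inside. The weak maximum principle for the fractional Laplacian then yields the pointwise comparison $|\tilde v(x)|\leq V(x)$ for a.e.\ $x\in\mathbb{R}^n$.

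\textbf{Conclusion via Hardy--Littlewood--Sobolev.} Since $q\in(1,n/(2s))$, the HLS inequality for the Riesz kernel of order $2s$ gives
\begin{equation*}
\|V\|_{L^{nq/(n-2sq)}(\mathbb{R}^n)}\leq c(n,s,q)\,\|\tilde f\|_{L^q(B_1\cap\Omega_{r,z})}.
\end{equation*}
Combined with $|\tilde v|\leq V$ and the lower bound $|B_1\cap\Omega_{r,z}|\geq c(n)>0$ (obtained from \eqref{cond.meas} using the assumption $B_{r/2}(z)\cap\Omega\neq\emptyset$), the rescaled averaged inequality follows with $c=c(n,s,q)$, and undoing the rescaling yields the claim.

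\textbf{Main obstacle.} The step requiring the most care is the rigorous justification of the pointwise comparison $|\tilde v|\leq V$: one must verify that $(V\pm\tilde v)^-$ is admissible as a test function in the weak formulation defining $\tilde v\in X_0^{s,2}(B_1\cap\Omega_{r,z})$. For $q\geq 2_*$, the HLS bound already places $V\in L^{2^*}(\mathbb{R}^n)$, and interior energy estimates for the fractional Laplacian give $V\in W^{s,2}_\mathrm{loc}(\mathbb{R}^n)$, so after a standard cut-off $(V-\tilde v)^-\in X_0^{s,2}(B_1\cap\Omega_{r,z})$ and the weak maximum principle applies. Alternatively, one may approximate $\tilde f$ by smooth, compactly supported $f_k$, apply the classical pointwise comparison to the corresponding smooth solutions $\tilde v_k$ and their Riesz potentials $V_k$, and pass to the limit using HLS continuity of the Riesz potential in $L^q$ together with $L^{2^*}$-stability of the solution map $f\mapsto \tilde v$.
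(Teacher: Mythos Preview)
Your argument is correct and follows the same overall strategy as the paper --- compare $v$ via the maximum principle with the solution of a simpler problem, invoke a known integrability estimate for that simpler problem, and use the measure density condition \eqref{cond.meas} to pass from $|B_r(z)|$ to $|B_r(z)\cap\Omega|$ --- but the choice of comparison problem differs. The paper splits $f=f_+-f_-$ and compares $v$ with the solution $w$ of $(-\Delta)^s w=f_+$ in the full ball $B_r(z)$ with zero exterior data, citing \cite[Theorem~16]{LeoPerPRiSor15} for the $L^q\to L^{nq/(n-2sq)}$ bound on $w$; you instead compare with the global Riesz potential $V=I_{2s}|\tilde f|$ and invoke Hardy--Littlewood--Sobolev directly. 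Your route is arguably more elementary since it avoids the cited reference and handles both signs of $f$ at once via $|\tilde f|$, while the paper's choice keeps the comparison function compactly supported, which makes the function-space check for the maximum principle completely trivial (both $v$ and $w$ lie in $X_0^{s,2}$). The care you flag in the ``Main obstacle'' paragraph is exactly the price of working with the unbounded-support Riesz potential, but as you note, for $q\ge 2_*$ one has $V\in W^{s,2}_{\mathrm{loc}}\cap L^1_{2s}$, so the weak maximum principle (e.g.\ \cite[Lemma~2.3.3]{FerRos24}) applies after the standard cut-off or approximation argument you sketch.
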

\begin{proof}
Let $w\in W^{s,2}(\bbR^n)$ be the weak solution to 
\begin{equation*}
\left\{
\begin{alignedat}{3}
(-\Delta)^s{w}&=  f_+&&\qquad \mbox{in  $ B_r(z)$}, \\
{w}&=0&&\qquad  \mbox{in $\bbR^n\setminus B_r(z) $},
\end{alignedat} \right.
\end{equation*}
where $f_+\equiv0$ in $B_r(z)\setminus \Omega$. Then, by \cite[Theorem 16]{LeoPerPRiSor15}, we get 
\begin{align}\label{ineq0.high}
   \left(\dashint_{B_r(z)}|w|^{\frac{nq}{n-2sq}}\,dx\right)^{\frac{n-2sq}{nq}}\leq c\left(\dashint_{B_r(z)}r^{2sq}|f_+|^q\,dx\right)^{\frac1q},
\end{align}
where $c=c(n,s,q)$. Since the Green function of $(-\Delta)^s$ with respect to $B_{r}(z)$ is nonnegative (see \cite{CheKimSon10}) and $f_+\geq0$, we have $w\geq0$ in $\bbR^n$. Thus, we see that
\begin{equation*}
\left\{
\begin{alignedat}{3}
(-\Delta)^s{(v-w)}&\leq  0&&\qquad \mbox{in  $ B_r(z)\cap\Omega$}, \\
{v-w}&\leq0&&\qquad  \mbox{in $\bbR^n\setminus (B_r(z)\cap\Omega) $}.
\end{alignedat} \right.
\end{equation*}
In addition, since $B_{r/2}(z)\cap \Omega\neq\emptyset$, by \eqref{cond.meas}, we have 
\begin{align*}
    |B_r(z)\cap \Omega| \ge  |B_r(z)|/c
\end{align*}
for some constant $c=c(n)$.
Using this and the maximum principle given in \cite[Lemma 2.3.3]{FerRos24}, we have $v\leq w$ in $\bbR^n$. Thus, \eqref{ineq0.high} and the fact that $v=f_+=0$ in $B_r(z)\setminus \Omega$ imply
\begin{align}\label{ineq1.high}
   \left(\dashint_{B_r(z)\cap \Omega}|v_+|^{\frac{nq}{n-2sq}}\,dx\right)^{\frac{n-2sq}{nq}}\leq c\left(\dashint_{B_r(z)\cap \Omega}r^{2sq}|f_+|^q\,dx\right)^{\frac1q}
\end{align}
for some constant $c=c(n,s,q)$. Since \eqref{ineq1.high} also holds with $v_+$ and $f_+$ replaced by $v_-$ and $f_-$, respectively, we have the desired estimate.
\end{proof}

\begin{remark}\label{rmk.comp.high}
We are now able to obtain the following comparison estimate by using Lemma \ref{lem.comp.high} and the linearity of the operator. Let $u$ be a a weak solution to 
\begin{equation*}
\left\{
\begin{alignedat}{3}
(-\Delta)^s{u}&= f&&\qquad \mbox{in  $B_{2r}(z)\cap \Omega$}, \\
u&=0&&\qquad  \mbox{in $B_{2r}(z)\setminus \Omega $},
\end{alignedat} \right.
\end{equation*}
and let $v$ be the weak solution to 
\begin{equation*}
\left\{
\begin{alignedat}{3}
(-\Delta)^s{v}&= 0&&\qquad \mbox{in  $B_{r}(z)\cap \Omega$}, \\
{v}&=u&&\qquad  \mbox{in $\bbR^n\setminus (B_{r}(z)\cap \Omega) $}.
\end{alignedat} \right.
\end{equation*}
Since $u-v$ is the weak solution to \eqref{eq.high}, the estimate given in Lemma \ref{lem.comp.high} holds with $v$ replaced by $u-v$.
\end{remark}

We now revise the estimate given in Theorem \ref{thm.hig} which is useful for the proof of Theorem \ref{thm.cz}.
\begin{lemma}\label{lem.hig}
    Let $v$ be a weak solution to 
    \begin{equation*}
\left\{
\begin{alignedat}{3}
(-\Delta)^s{v}&= 0&&\qquad \mbox{in  $B_{4r}(z)\cap \Omega$}, \\
{v}&=0&&\qquad  \mbox{in $B_{4r}(z)\setminus\Omega $},
\end{alignedat} \right.
\end{equation*}
where $z\in \partial\Omega$ and $\Omega$ is $(\delta,4r)$-Reifenberg flat. Let us fix $\gamma\geq1$ and $\sigma<\min\{s+1/\gamma,1\}$. If $\delta=\delta(n,s,\gamma,\sigma)>0$ is sufficiently small, then we have 
\begin{align*}
    r^{\sigma-n/\gamma}[v]_{W^{\sigma,\gamma}(B_{3r}(z))}\leq c\widetilde{E}(v-(v)_{B_{4r}(z)};B_{4r}(z))
\end{align*}
for some constant $c=c(n,s,\gamma,\sigma)$.
\end{lemma}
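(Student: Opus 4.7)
The plan is to deduce the estimate from Theorem \ref{thm.hig} applied to the homogeneous linear equation, combined with a short Poincar\'e-type argument that converts the $L^{1}$-average of $v$ into an oscillation quantity by exploiting $z\in\partial\Omega$. By Lemma \ref{lem.scale} and Lemma \ref{lem.rei}, I may reduce to the normalized case $z=0$, $r=1$, so that the goal becomes
\[
[v]_{W^{\sigma,\gamma}(B_{3})} \leq c\,\widetilde{E}\bigl(v-(v)_{B_{4}};B_{4}\bigr).
\]

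First I apply Theorem \ref{thm.hig} with $p=2$ and $f=0$. After a standard covering argument enlarging the pair $(B_{1},B_{2})$ to $(B_{3},B_{4})$, this produces
\[
[v]_{W^{\sigma,\gamma}(B_{3})} \leq c\,\widetilde{E}(v;B_{4}).
\]
The remaining task is to replace $\widetilde{E}(v;B_{4})$ on the right-hand side by $\widetilde{E}(v-c_{0};B_{4})$, where $c_{0}\coloneqq(v)_{B_{4}}$.

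This replacement rests on the measure density condition \eqref{cond.meas}. Since $0\in\partial\Omega$ and $\Omega$ is $(\delta,4)$-Reifenberg flat with $\delta<1/2$, one has $|B_{4}\setminus\Omega|\geq 4^{-n}|B_{4}|$, equivalently $\theta\coloneqq|B_{4}\cap\Omega|/|B_{4}|\leq 1-4^{-n}$. Because $v\equiv 0$ on $B_{4}\setminus\Omega$, $c_{0}=\theta\,(v)_{B_{4}\cap\Omega}$, and therefore
\[
\dashint_{B_{4}\cap\Omega}|v-c_{0}|\,dx \geq \bigl|(v)_{B_{4}\cap\Omega}-c_{0}\bigr| = |c_{0}|\,\frac{1-\theta}{\theta} \geq c(n)\,|c_{0}|,
\]
which yields $|c_{0}|\leq c\,\widetilde{E}(v-c_{0};B_{4})$. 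Combining this with the triangle estimates $\dashint_{B_{4}\cap\Omega}|v|\,dx\leq\dashint_{B_{4}\cap\Omega}|v-c_{0}|\,dx+|c_{0}|$ and $\mathrm{Tail}(v;B_{4})\leq\mathrm{Tail}(v-c_{0};B_{4})+\mathrm{Tail}(c_{0};B_{4})\leq\mathrm{Tail}(v-c_{0};B_{4})+c|c_{0}|$ gives $\widetilde{E}(v;B_{4})\leq c\,\widetilde{E}(v-c_{0};B_{4})$; feeding this into the bound from Theorem \ref{thm.hig} and rescaling back to $(z,r)$ completes the argument.

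The chief subtlety lies in the second step: it is essential that $z\in\partial\Omega$, so that Reifenberg flatness supplies a quantitative lower bound on $|B_{4r}(z)\setminus\Omega|$, keeping the constant $c(n)$ in $|c_{0}|\leq c(n)\dashint_{B_{4}\cap\Omega}|v-c_{0}|$ bounded away from zero. Without this boundary hypothesis the estimate on $|c_{0}|$ would degenerate: an almost constant $v$ on an essentially interior ball would produce $\dashint|v-c_{0}|$ arbitrarily small while $|c_{0}|$ remained of order one.
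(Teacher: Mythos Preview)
Your proof is correct and follows essentially the same approach as the paper: apply Theorem~\ref{thm.hig} (with $p=2$, $f=0$) via scaling/covering to obtain $r^{\sigma-n/\gamma}[v]_{W^{\sigma,\gamma}(B_{3r}(z))}\leq c\,\widetilde{E}(v;B_{4r}(z))$, and then use the exterior measure density at the boundary point $z$ (from \eqref{cond.meas}) to control $|(v)_{B_{4r}(z)}|$ by the oscillation, thereby replacing $\widetilde{E}(v;B_{4r}(z))$ with $\widetilde{E}(v-(v)_{B_{4r}(z)};B_{4r}(z))$. The only cosmetic difference is that you run the Poincar\'e step through $(v)_{B_{4}\cap\Omega}$ while the paper phrases it via $(v)_{B_{4}\setminus\Omega}=0$; both yield the same bound $|c_0|\le c(n)\dashint_{B_4\cap\Omega}|v-c_0|\,dx$.
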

\begin{proof}
By Lemma \ref{lem.scale} and Theorem \ref{thm.hig} with $f=0$, we have
    \begin{align}\label{ineq2.hig}
    r^{\sigma-n/\gamma}[v]_{W^{\sigma,\gamma}(B_{3r}(z))}\leq c\widetilde{E}(v;B_{4r}(z))
\end{align}
for some constant $c=c(n,s,\gamma,\sigma)$, whenever $\delta=\delta(n,s,\gamma,\sigma)>0$ is sufficiently small. 
Since $z\in \partial\Omega$, \eqref{cond.meas} implies 
\begin{align}\label{ineq1.ave}
|B_{4r}(z)|/4^n \leq | B_{4r}(z)\setminus \Omega |.
\end{align}
Using \eqref{ineq1.ave} along with the facts that $v\equiv 0$ in $\bbR^n\setminus \Omega$ and that
\begin{align*}
|(v)_{B_{4r}(z)}-(v)_{B_{4r}(z)\setminus \Omega}|\leq \dashint_{B_{4r}(z)\setminus\Omega}|v-(v)_{B_{4r}(z)}|\,dx,
\end{align*}
we estimate 
\begin{align*}
    |(v)_{B_{4r}(z)}|=\dashint_{B_{4r}(z)}|v-(v)_{B_{4r}(z)\setminus \Omega}|\,dx&\leq \dashint_{B_{4r}(z)}|v-(v)_{B_{4r}(z)}|\,dx+|(v)_{B_{4r}(z)}-(v)_{B_{4r}(z)\setminus \Omega}|\\
    &\leq c\dashint_{B_{4r}(z)}|v-(v)_{B_{4r}(z)}|\,dx
\end{align*}
for some constant $c=c(n)$, which with H\"older's inequality further gives
    \begin{align}\label{ineq1.hig}
        \widetilde{E}(v;B_{4r}(z))&\leq \widetilde{E}(v-(v)_{B_{4r}(z)};B_{4r}(z))+c(v)_{B_{4r}(z)}\leq c\widetilde{E}(v-(v)_{B_{4r}(z)};B_{4r}(z))
    \end{align}
    for some constant $c=c(n,s)$. 
%By Theorem \ref{thm.hig} with $f=0$, we have
%\begin{align}\label{ineq2.hig}
%r^{\sigma-n/\gamma}[v]_{W^{\sigma,\gamma}(B_{3r}(z))}\leq c\widetilde{E}(v;B_{4r}(z))
%\end{align}
%for some constant $c=c(n,s,\gamma,\sigma)$, whenever $\delta=\delta(n,s,\gamma,\sigma)$ is sufficiently small. 
Connecting \eqref{ineq1.hig} to \eqref{ineq2.hig} yields the desired result. 
\end{proof}
We now prove Theorem \ref{thm.cz}.

\begin{proof}[Proof of Theorem \ref{thm.cz}.]
%Throughout the proof, if a constant $c$ depends only on $n,s,q,\gamma$ and $\sigma$, then we omit the dependency. 
We fix $\gamma<nq/(n-2sq)$ and 
\[ \sigma \in (s,\min\{s+1/\gamma,1\}) \quad \text{with} \quad \sigma<2s+n/\gamma - n/q. \]
We next choose
\begin{equation*}
    \sigma_1\coloneqq(\sigma+\min\{s+1/\gamma,1,2s+n/\gamma-n/q\})/2\in(\sigma,\min\{s+1/\gamma,1\})
\end{equation*}
to see that $\sigma_1 - n/\gamma < 2s-n/q$. Then we set
\begin{equation}\label{set.beta}
    \beta\coloneqq \frac{\sigma_1 \gamma}{2s\gamma+n(1-\gamma/q)}\in(0,1)
\end{equation}
Let us set $1\leq\rho<r\leq 2$ 
and fix $h\in B_{2^{-10}(r-\rho)^{1/\beta}}\setminus \{0\}$. By \cite[Lemma 2.11]{DieKimLeeNow24j}, there exist a constant $c=c(n)$, a finite index set $I$ and a sequence $\{z_i\}_{i \in I}\subset B_{\rho}$ such that 
    \begin{align}\label{ineq0.czb}
        B_{\rho}\subset \bigcup_{i\in I} B_{|h|^\beta}(z_i)\subset B_{r},\quad \sup_{x\in\bbR^n}\sum_{i\in I}{\mbox{\Large$\chi$}}_{B_{2^k|h|^\beta}(z_i)}(x)\leq c2^{nk},\quad |I|\leq c/|h|^{n\beta},
    \end{align}
    where $|I|$ denotes the number of elements in the set $I$. 
Accordingly, we introduce the finite difference operator $\delta_{h}$ defined by
\[ \delta_{h}u(x) \coloneqq (\delta_{h}u)(x) \coloneqq u(x+h)-u(x), \qquad x \in B_{\rho}. \]
We are going to prove that, for any $h \in B_{2^{-10}(r-\rho)^{1/\beta}}\setminus\{0\}$, 
% \begin{equation}\label{boot.czb}
% \begin{aligned}
%     \int_{B_{\rho}}|\delta_hu|^\gamma \,dx &\leq c|h|^{\gamma(\sigma_1(1-\beta)+\alpha\beta)}\int_{B_{r}}\int_{B_{r}}\frac{|u(x)-u(y)|^\gamma}{|x-y|^{n+\alpha \gamma}}\,dy\,dx\\
%     &\quad+c|h|^{\sigma_1 \gamma}\left[\widetilde{E}(u;B_r)^\gamma+\left(\int_{B_{r}\cap \Omega}|f|^{q}\,dx\right)^{\gamma/q}\right],
% \end{aligned}
% \end{equation}
% {\color{red}(should be modified)} 
% holds for some constant $c=c(n,s,\gamma,q,\sigma)$, whenever $\delta=\delta(n,s,q,\gamma,\sigma)>0$ is sufficiently small and $u\in W^{\alpha,\gamma}(B_r)$ for some $\alpha\in(0,\sigma_1)$. In particular when $\alpha=0$, the first term in the right-hand side of \eqref{boot.czb} is given by the $L^\gamma$ norm of $u$ in $B_r$, instead of the semi-norm of $u$ in the $W^{\alpha,\gamma}$-space.
\begin{equation}\label{boot.czb}
\begin{aligned}
    \int_{B_{\rho}}|\delta_hu|^\gamma \,dx \leq c|h|^{\gamma(\sigma_1(1-\beta)+\alpha\beta)}[u]^{\gamma}_{W^{\alpha,\gamma}(B_r)}
    +c|h|^{\sigma_1 \gamma}\left[\widetilde{E}(u;B_r)^\gamma+\left(\int_{B_{r}\cap \Omega}|f|^{q}\,dx\right)^{\gamma/q}\right],
\end{aligned}
\end{equation}
holds for some constant $c=c(n,s,\gamma,q,\sigma,r-\rho)$, whenever $\delta=\delta(n,s,q,\gamma,\sigma)>0$ is sufficiently small and $u\in W^{\alpha,\gamma}(B_r)$ for some $\alpha\in [0,\sigma_1)$. In particular when $\alpha=0$, we identify $W^{0,\gamma} \equiv L^\gamma$ and $[u]_{W^{0,\gamma}(B_r)} \equiv \|u\|_{L^{\gamma}(B_r)}$. 
    Now, with $i \in I$ being fixed, we estimate 
    \begin{align*}
        J\coloneqq \int_{B_{|h|^\beta}(z_i)}|\delta_hu|^\gamma \,dx,
    \end{align*}
    depending on the location of the ball $B_{2|h|^\beta}(z_i)$. 

        \textbf{Step 1: The exterior case.} We first assume that $B_{2|h|^\beta}(z_i)\cap \Omega=\emptyset$. In this case, since $|h|\leq |h|^\beta$ for $|h|\leq 1$ and $\beta<1$, we have $x,x+h\in B_{2|h|^\beta}(z_i)$ when $x\in B_{|h|^\beta}(z_i)$. Thus, we get
        \begin{align}\label{ineq01.czb}
            J=\int_{B_{|h|^\beta}(z_i)}|\delta_hu|^\gamma\,dx=0.
        \end{align}
        
        \textbf{Step 2: The interior case.} We next assume that $B_{2|h|^\beta}(z_i)\cap (\bbR^n\setminus\Omega)=\emptyset$. From \cite[Proposition 2.6]{BraLin17} and the fact that $0<|h| < |h|^{\beta}/2$, we note 
        \begin{align}\label{ineq1.czb}
            \left\|\frac{\delta_h u}{|h|^{\sigma_1}}\right\|_{L^\gamma(B_{|h|^\beta}(z_i))}\leq c\left([u]_{W^{\sigma_1,\gamma}(B_{3|h|^\beta/2}(z_i))}+\frac{\|u-(u)_{B_{2|h|^\beta}(z_i)}\|_{L^\gamma(B_{3|h|^\beta/2}(z_i))}}{|h|^{\beta\sigma_1 }}\right).
        \end{align}
     Now, using \cite[Theorem 1.4]{DieNow23} together with the fact that $B_{2|h|^\beta}(z_i)\Subset \Omega$, we have  
        \begin{align*}
            &|h|^{-n\beta/\gamma}[u]_{W^{\sigma_1,\gamma}(B_{3|h|^\beta/2}(z_i))}\\
            &\leq c|h|^{-\sigma_1\beta }\left(\widetilde{E}(u-(u)_{B_{2|h|^\beta}(z_i)};B_{2|h|^\beta}(z_i))+|h|^{-n\beta/q}\||h|^{2s\beta}f\|_{L^q(B_{2|h|^\beta}(z_i))}\right)\\
            &\leq c|h|^{-\sigma_1\beta}|h|^{-n\beta/\gamma }\|u-(u)_{B_{2|h|^\beta}(z_i)}\|_{L^\gamma(B_{2|h|^\beta/2}(z_i))}\\
            &\quad+c|h|^{-\sigma_1\beta}\left(\mathrm{Tail}(u-(u)_{B_{2|h|^\beta}(z_i)};B_{2|h|^\beta}(z_i))+|h|^{-n\beta/q}\||h|^{2s\beta}f\|_{L^q(B_{2|h|^\beta}(z_i))}\right).
        \end{align*}
        Thus, we estimate $J$ as 
         \begin{equation}\label{ineq2.czb}
         \begin{aligned}
             J&\leq c|h|^{\gamma\sigma_1(1-\beta)}\left(\int_{B_{2|h|^\beta}(z_i)}|u-(u)_{B_{2|h|^\beta}(z_i)}|^\gamma\,dx+|h|^{n\beta}\mathrm{Tail}(u-(u)_{B_{2|h|^\beta}(z_i)})^\gamma\right)\\
             &\quad +c|h|^{\gamma\sigma_1(1-\beta)+n\beta(1-\gamma/q)+2s\beta\gamma}\left(\int_{B_{2|h|^\beta}(z_i)}|f|^q\,dx\right)^{\gamma/q}.
         \end{aligned}
         \end{equation}
         
        \textbf{Step 3: The boundary case.} We finally assume that $B_{2|h|^\beta}(z_i)\cap \Omega\neq \emptyset$ and $B_{2|h|^\beta}(z_i)\cap (\bbR^n\setminus\Omega)\neq \emptyset$. 
        Let $v_i$ be the weak solution to 
\begin{equation*}
\left\{
\begin{alignedat}{3}
(-\Delta)^s{v}_i&= 0&&\qquad \mbox{in  $B_{8|h|^\beta}(z_i)\cap \Omega$}, \\
{v}_i&=u&&\qquad  \mbox{in $\bbR^n\setminus (B_{8|h|^\beta}(z_i)\cap \Omega) $}.
\end{alignedat} \right.
\end{equation*}
We then see that $u-v_i$ is the weak solution to \eqref{eq.high} with $u$ and $B_r(z)$ replaced by $u-v_i$ and $B_{8|h|^\beta}(z_i)$, respectively. 
Thus, by H\"older's inequality and Remark \ref{rmk.comp.high}, we get
\begin{equation}\label{ineq.j1.cz}
\begin{aligned}
    \left(\dashint_{\Omega\cap B_{8|h|^\beta}(z_i)}|u-v|^{\gamma}\,dx\right)^{1/\gamma} & \leq \left(\dashint_{\Omega\cap B_{8|h|^\beta}(z_i)}|u-v|^{\frac{nq}{n-2sq}}\,dx\right)^{\frac{n-2sq}{nq}} \\
    & \leq c\left(\dashint_{\Omega\cap B_{8|h|^\beta}(z_i)}||h|^{2s\beta}f|^{q}\,dx\right)^{1/q}
\end{aligned}
\end{equation}
for some constant $c=c(n,s,q)$. 
Using this, we have
\begin{equation}\label{ineq.last.cz}
\begin{aligned}
J&\leq c\int_{B_{|h|^\beta}(z_i)}{|\delta_h(u-v)|^\gamma}\,dx+c\int_{B_{|h|^\beta}(z_i)}{|\delta_hv|^\gamma}\,dx\\
&\leq c|h|^{2s\beta\gamma+n\beta(1-\gamma/q)}\left(\int_{B_{8|h|^\beta}(z_i)\cap\Omega}|f|^q\,dx\right)^{\gamma/q}+\underbrace{c\int_{B_{|h|^\beta}(z_i)}{|\delta_hv|^\gamma}\,dx}_{\eqqcolon J_1}.
\end{aligned}
\end{equation}
To estimate $J_1$, we note that there is a point $\overline{z_i}\in\partial\Omega$ such that 
\begin{equation*}
    B_{|h|^\beta}(z_i)\subset B_{3|h|^\beta}(\overline{z_i})\subset B_{4|h|^\beta}(\overline{z_i})\subset B_{8|h|^\beta}(z_i).
\end{equation*}
Thus, we see that $v_i$ is also a weak solution to
\begin{equation*}%\label{eq.higcomp}
\left\{
\begin{alignedat}{3}
(-\Delta)^s{v}_i&= 0&&\qquad \mbox{in  $B_{4|h|^\beta}(\overline{z_i})\cap \Omega$}, \\
{v}_i&=0&&\qquad  \mbox{in $ B_{4|h|^\beta}(\overline{z_i})\setminus \Omega $}.
\end{alignedat} \right.
\end{equation*}
By \eqref{ineq1.czb} with $u=v$ and Lemma \ref{lem.hig} together with the fact that $\sigma_1<\min\{s+1/\gamma,1\}$, we derive
\begin{align*}
    J_1^{1/\gamma}&\leq c|h|^{\sigma_1}\left([v]_{W^{\sigma_1,\gamma}(B_{4|h|^\beta/2}(z_i))}+|h|^{(1-\beta)\sigma_1 }\|v-(v)_{B_{4|h|^\beta}(z_i)}\|_{L^\gamma(B_{4|h|^\beta/2}(z_i))}\right)\\
     &\leq c|h|^{(1-\beta)\sigma_1}\|v-(v)_{B_{5|h|^\beta}(z_i)}\|_{L^\gamma(B_{5|h|^\beta}(z_i))}\\
            &\quad+c|h|^{(1-\beta)\sigma_1}|h|^{n\beta/\gamma }\mathrm{Tail}(v-(v)_{B_{5|h|^\beta}(z_i)};B_{5|h|^\beta}(z_i)),
\end{align*}
when $\delta>0$ is sufficiently small.
We now use \eqref{ineq.j1.cz} to further estimate $J_1$ as
\begin{align*}
    J_1
    &\leq c|h|^{\gamma\sigma_1(1-\beta)}\left(\int_{B_{8|h|^\beta}(z_i)}|u-(u)_{B_{8|h|^\beta}(z_i)}|^\gamma\,dx+|h|^{n\beta}\mathrm{Tail}(u-(u)_{B_{8|h|^\beta}(z_i)};B_{8|h|^\beta}(z_i))^{\gamma}\right)\\
             &\quad +c|h|^{2s\beta\gamma+\gamma\sigma_1(1-\beta)+n\beta(1-{\gamma}/{q})}\left(\int_{B_{8|h|^\beta}(z_i)\cap \Omega}|f|^q\,dx\right)^{\gamma/q}.
\end{align*}
Plugging this into \eqref{ineq.last.cz}, we get
\begin{equation}\label{ineq3.czb}
    \begin{aligned}
        J&\leq c|h|^{\gamma\sigma_1(1-\beta)}\left(\int_{B_{8|h|^\beta}(z_i)}|u-(u)_{B_{8|h|^\beta}(z_i)}|^\gamma\,dx+|h|^{n\beta}\mathrm{Tail}(u-(u)_{B_{8|h|^\beta}(z_i)};B_{8|h|^\beta}(z_i))^\gamma\right)\\
             &\quad +c|h|^{2s\beta\gamma+n\beta(1-\gamma/q)}\left(\int_{B_{8|h|^\beta}(z_i)\cap \Omega}|f|^q\,dx\right)^{\gamma/q}.
    \end{aligned}
\end{equation}

\textbf{Step 4: Tail estimates.} 
    By \eqref{ineq01.czb}, \eqref{ineq2.czb} and \eqref{ineq3.czb}, we derive
    \begin{equation}\label{ineq4.czb}
    \begin{aligned}
        J&\leq c|h|^{\gamma\sigma_1(1-\beta)}\left(\int_{B_{8|h|^\beta}(z_i)}|u-(u)_{B_{8|h|^\beta}(z_i)}|^\gamma\,dx+|h|^{n\beta}\mathrm{Tail}(u-(u)_{B_{8|h|^\beta}(z_i)};B_{8|h|^\beta}(z_i))^\gamma\right)\\
             &\quad +c|h|^{2s\beta\gamma+n\beta(1-\gamma/q)}\left(\int_{B_{8|h|^\beta}(z_i)\cap \Omega}|f|^q\,dx\right)^{\gamma/q}.
    \end{aligned}
\end{equation}
We then estimate the tail term. We first split as
\begin{equation*}
\begin{aligned}
    & \mathrm{Tail}(u-(u)_{B_{8|h|^\beta}(z_i)};B_{8|h|^\beta}(z_i)) \\
    &\leq c\sum_{j=1}^{l}2^{-2sj}\dashint_{B_{2^{j+3}|h|^\beta}(z_i)}|u-(u)_{B_{2^{j+3}|h|^\beta}(z_i)}|\,dx+ c2^{-2sl}\mathrm{Tail}(u-(u)_{B_{2^{l+4}|h|^\beta}(z_i)};B_{2^{l+4}|h|^\beta}(z_i))\\
    &\leq c\sum_{j=1}^{l}2^{-2sj}\dashint_{B_{2^{j+3}|h|^\beta}(z_i)}|u-(u)_{B_{2^{j+3}|h|^\beta}(z_i)}|\,dx+c|h|^{2s\beta}\widetilde{E}(u;B_r),
\end{aligned}
\end{equation*}
where $l$ is the positive integer satisfying
    \begin{equation}\label{intl.czb}
        (r-\rho)/32<2^{3+l}|h|^\beta\leq (r-\rho)/16.
    \end{equation}
    Letting
    \begin{equation}\label{defn.sigma2}
    \sigma_2\coloneqq(\sigma_1+2s)/2\in(\sigma_1,2s),
    \end{equation}
    and using H\"older's inequality, we get
\begin{align*}        &\left(\sum_{j=1}^{l}2^{-2sj}\dashint_{B_{2^{j+3}|h|^\beta}(z_i)}|u-(u)_{B_{2^{j+3}|h|^\beta}(z_i)}|\,dx\right)^\gamma\\
    &= \left(\sum_{j=1}^{l} 2^{(\sigma_2 -2s)j}2^{-\sigma_2 j}\dashint_{B_{2^{j+3}|h|^\beta}(z_i)}|u-(u)_{B_{2^{j+3}|h|^\beta}(z_i)}|\,dx\right)^\gamma\\
    & \le \left(\sum_{j=1}^{l}2^{j(\sigma_{2}-2s)\gamma/(\gamma-1)}\right)^{\gamma-1}\left[\sum_{j=1}^{l}2^{-\sigma_{2}\gamma j}\left(\dashint_{B_{2^{j+3}|h|^{\beta}}(z_i)}|u-(u)_{B_{2^{j+3}|h|^{\beta}}(z_i)}|\,dx\right)^{^{\gamma}}\right] \\
    &\leq c\sum_{j=1}^{l}2^{-\sigma_2\gamma j}\dashint_{B_{2^{j+3}|h|^\beta}(z_i)}|u-(u)_{B_{2^{j+3}|h|^\beta}(z_i)}|^\gamma\,dx.
    \end{align*}
This in turn gives the following tail estimate:
    \begin{equation}\label{ineq5.czb}
    \begin{aligned}
        & |h|^{n\beta} \mathrm{Tail}(u-(u)_{B_{8|h|^\beta}(z_i)};B_{8|h|^\beta}(z_i))^\gamma \\
        &\leq c\sum_{j=1}^{l}2^{-(\sigma_2 \gamma+n)j}\int_{B_{2^{j+3}|h|^\beta}(z_i)}|u-(u)_{B_{2^{3+j}|h|^\beta}(z_i)}|^\gamma\,dx
    +c|h|^{n\beta+2s\beta \gamma}\widetilde{E}(u;B_r)^\gamma .
    \end{aligned}
    \end{equation}

    We now plug \eqref{ineq5.czb} into \eqref{ineq4.czb} to get 
    \begin{equation}\label{ineq6.czb}
    \begin{aligned}
        \int_{B_{|h|^\beta}(z_i)}|\delta_h u|^\gamma\,dx&\leq c|h|^{\gamma\sigma_1(1-\beta)}\sum_{j=0}^{l}2^{-(\sigma_2 \gamma+n)j}\int_{B_{2^{j+3}|h|^\beta}(z_i)}|u-(u)_{B_{2^{3+j}|h|^\beta}(z_i)}|^\gamma\,dx\\
        &\quad+c|h|^{2s\beta \gamma+n\beta}\widetilde{E}(u;B_1)^\gamma+c|h|^{2s\beta\gamma+n\beta(1-\gamma/q)}\left(\int_{B_{8|h|^\beta}(z_i)\cap \Omega}|f|^q\,dx\right)^{\gamma/q}.
    \end{aligned}
    \end{equation}
By employing \eqref{ineq0.czb} and \eqref{ineq6.czb}, we have 
\begin{equation}\label{ineq7.czb}
\begin{aligned}
    \int_{B_{3/4}}|\delta_hu|^\gamma \,dx &\leq \sum_{i\in I }\int_{B_{|h|^\beta}(z_i)}|\delta_h u|^\gamma \,dx\\
    &\leq c\sum_{i\in I} |h|^{\gamma\sigma_1(1-\beta)}\sum_{j=0}^{l}2^{-(\sigma_2\gamma+n)j}\int_{B_{2^{j+3}|h|^\beta}(z_i)}|u-(u)_{B_{2^{3+j}|h|^\beta}(z_i)}|^\gamma\,dx\\
        &\quad+c\sum_{i\in I}|h|^{n\beta+2s\beta \gamma}\widetilde{E}(u;B_r)^\gamma\\
        &\quad+c\sum_{i\in I}|h|^{2s\beta\gamma+n\beta(1-\gamma/q)}\left(\int_{B_{8|h|^\beta}(z_i)\cap \Omega}|f|^q\,dx\right)^{\gamma/q}\\
        &\leq c\sum_{i\in I} |h|^{\gamma\sigma_1(1-\beta)}\sum_{j=0}^{l}2^{-(\sigma_2 \gamma+n)j}\int_{B_{2^{j+3}|h|^\beta}(z_i)}|u-(u)_{B_{2^{3+j}|h|^\beta}(z_i)}|^\gamma\,dx\\
        &\quad+c|h|^{2s\beta\gamma+n\beta(1-\gamma/q)}\left[\widetilde{E}(u;B_r)^\gamma+\left(\int_{B_{r}\cap \Omega}|f|^{q}\,dx\right)^{\gamma/q}\right].
\end{aligned}
\end{equation}
Indeed, we have also used 
\begin{align*}
    \sum_{i\in I}\left(\int_{B_{8|h|^\beta}(z_i)\cap \Omega}|f|^q\,dx\right)^{\gamma/q}\leq\left(\sum_{i\in I}\int_{B_r\cap \Omega}|f|^q{\mbox{\Large$\chi$}}_{B_{8|h|^\beta}(z_i)\cap \Omega}\,dx\right)^{\gamma/q}\leq c\left(\int_{B_r\cap \Omega}|f|^q\,dx\right)^{\gamma/q},
\end{align*}
which follows from the fact that $\gamma/q\geq1$ and \eqref{ineq0.czb}.

\textbf{Step 5: Fractional bootstrap.} 
We now use a bootstrap argument together with a suitable choice of parameter in order to further estimate the term given in \eqref{ineq7.czb}. Assume that $u\in W^{\alpha,\gamma}(B_r)$ for some $\alpha \in (0,\sigma_1)$. Then, by the fractional Poincar\'e inequality, \eqref{intl.czb} and \eqref{ineq0.czb},   
\begin{align*}
    &\sum_{i\in I} |h|^{\gamma\sigma_1(1-\beta)}\sum_{j=1}^{l}2^{-(\sigma_2\gamma+n)j}\int_{B_{2^{j+3}|h|^\beta}(z_i)}|u-(u)_{B_{2^{3+j}|h|^\beta}(z_i)}|^\gamma\,dx\\
    &\leq c\sum_{i\in I} |h|^{\gamma\sigma_1(1-\beta)}\sum_{j=1}^{l}2^{-(\sigma_2 \gamma+n)j}(2^{j+3}|h|^\beta)^{\alpha  \gamma}\int_{B_{2^{j+3}|h|^\beta}(z_i)}\int_{B_{2^{j+3}|h|^\beta}(z_i)}\frac{|u(x)-u(y)|^\gamma}{|x-y|^{n+\alpha \gamma}}\,dx\,dy\\
    &\leq  c|h|^{\gamma\sigma_1(1-\beta)}|h|^{\alpha \beta \gamma}\sum_{j=1}^{l}2^{-j(\sigma_2-\sigma_1)\gamma-nj}\sum_{i\in I}\int_{B_{r}}{\mbox{\Large$\chi$}}_{B_{2^{j+3}|h|^\beta}(z_i)}(x)\left[\int_{B_{r}}\frac{|u(x)-u(y)|^\gamma}{|x-y|^{n+\alpha \gamma}}\,dy\right]\,dx\\
    &\leq c|h|^{\gamma(\sigma_1(1-\beta)+\alpha\beta)}\sum_{j=1}^{l}2^{-j(\sigma_2-\sigma_1)\gamma}\int_{B_{r}}\int_{B_{r}}\frac{|u(x)-u(y)|^\gamma}{|x-y|^{n+\alpha \gamma}}\,dy\,dx.
\end{align*}
Plugging this into \eqref{ineq7.czb} and then using the fact that $\sigma_2>\sigma_1$ (recall \eqref{defn.sigma2}), we arrive at \eqref{boot.czb}, where we have also used the fact that \eqref{set.beta} implies
\begin{align*}
    2s\beta\gamma+n\beta(1-\gamma/q) \ge \sigma_1 \gamma.
\end{align*}
Next, we consider the two sequences $\{\alpha_i\}$ and $\{\overline{\alpha}_i\}$ defined by
\begin{align*}
    \alpha_i\coloneqq \sigma_1(1-\beta)\sum_{k=0}^{i-1}\beta^k - \frac{(\sigma_1-\sigma)(1-\beta)}{4}\sum_{k=0}^{i-1}\beta^k\quad\text{and}\quad \overline{\alpha}_{i}\coloneqq \alpha_i+\frac{(\sigma_1 - \sigma)(1-\beta)}{4}
\end{align*}
for any $i\geq1$, in order to see that 
\begin{align*}
    \lim_{i\to\infty}\alpha_i = \frac{3\sigma_1 + \sigma}{4} \quad\text{and}\quad \overline{\alpha}_{i+1} = \sigma_{1}(1-\beta)+\alpha_i \beta.
\end{align*}
Therefore, there is a constant $i_0=i_0(n,s,q,\gamma,\sigma)$ such that 
\begin{align*}
    \alpha_{i_0}>(\sigma+\sigma_1)/2.
\end{align*}
We now choose a sequence 
\begin{align*}
    r_i=\frac{3}{2}-\frac{i}{2i_0}\quad \text{for any }i=0,\ldots,i_0.
\end{align*}
Then \eqref{boot.czb} with the choices $\rho=(r_i+r_{i-1})/2$ and $r=r_{i-1}$, along with the fact that $\alpha_i \le \sigma_1$, implies
\begin{align*}
    \sup_{h}\int_{B_{(r_i+r_{i-1})/2}}\frac{|\delta_hu|^\gamma}{|h|^{\overline{\alpha}_{i}\gamma}}\,dx \leq c\left[ [u]^\gamma_{W^{\alpha_{i-1},\gamma}(B_{r_{i-1}})} + \widetilde{E}(u;B_{r_{i-1}})^\gamma + \left(\int_{B_{r_{i-1}}\cap \Omega}|f|^{q}\,dx\right)^{\gamma/q} \right]
\end{align*}
for any  $i\geq1$, where the supremum is taken over all $h \in \bbR^n$ satisfying $0<|h|<2^{-10}(2i_0)^{-1/\beta}$. When $i=1$, by taking $\alpha_{0}=0$, this is understood as
\begin{align*}
    \sup_{h}\int_{B_{(r_0+r_1)/2}}\frac{|\delta_hu|^\gamma}{|h|^{\overline{\alpha}_1\gamma}}\,dx\leq c\left[ \|u\|^{\gamma}_{L^\gamma(B_{r_0})} + \widetilde{E}(u;B_{r_0})^\gamma + \left(\int_{B_{r_0}\cap \Omega}|f|^{q}\,dx\right)^{\gamma/q} \right]
\end{align*}
and follows from \eqref{ineq7.czb} via elementary manipulations. 
Since $\overline{\alpha}_{i} > \alpha_{i}$, \cite[Lemma 2.3]{DieKimLeeNow24} implies
\begin{equation}\label{iter.czb}
    [u]^{\gamma}_{W^{\alpha_i , \gamma}(B_{r_i})} \le c\left[ \|u\|^{\gamma}_{L^{\gamma}(B_{r_{i-1}})} +  [u]^{\gamma}_{W^{\alpha_{i-1},\gamma}(B_{r_{i-1}})} + \widetilde{E}(u;B_{r_{i-1}})^\gamma + \left(\int_{B_{r_{i-1}}\cap \Omega}|f|^{q}\,dx\right)^{\gamma/q} \right]
\end{equation}
whenever $i \ge 1$. Iterating \eqref{iter.czb} $i_0$-times, we deduce
\begin{equation}\label{iter2.czb}
[u]^{\gamma}_{W^{\sigma , \gamma}(B_{1})} \le c\left[\|u\|^{\gamma}_{L^{\gamma}(B_{3/2})} + \widetilde{E}(u;B_{3r/2}) + \left(\int_{B_{3/2}\cap\Omega}|f|^{q}\,dx\right)^{\gamma/q}\right]. 
\end{equation}
% Since $\overline{\alpha}_1 > \alpha_1$, \cite[Lemma 2.3]{DieKimLeeNow24} implies
% \begin{align*}
%     [u]^\gamma_{W^{\alpha_1,\gamma}(B_{r_1})}\leq c\|u\|^{\gamma}_{L^\gamma(B_{r_0})} + c\widetilde{E}(u;B_{r_0})^\gamma+c\left(\int_{B_{r_0}\cap \Omega}|f|^{q}\,dx\right)^{\gamma/q}.
% \end{align*}
To estimate the first term in the right-hand side, we consider the weak solution $v$ to
\begin{equation*}
\left\{
\begin{alignedat}{3}
(-\Delta)^s{v}&= 0&&\qquad \mbox{in  $ B_2\cap \Omega$}, \\
{v}&=u&&\qquad  \mbox{in $\bbR^n\setminus (\Omega\cap B_2) $}
\end{alignedat} \right.
\end{equation*}
to see that 
\begin{equation*}
\left\{
\begin{alignedat}{3}
(-\Delta)^s{(u-v)}&= f&&\qquad \mbox{in  $ B_2\cap \Omega$}, \\
{u-v}&=0&&\qquad  \mbox{in $\bbR^n\setminus (\Omega\cap B_2) $}.
\end{alignedat} \right.
\end{equation*}
In light of Lemma \ref{lem.comp.high} and Lemma \ref{lem.bdd}, we derive
\begin{equation*}%\label{ineq7.cz}
\begin{aligned}
    \|u\|_{L^\gamma(B_{3/2})} & \leq \|u-v\|_{L^\gamma(B_{3/2})}+\|v\|_{L^\gamma(B_{3/2})} \\
    &\leq c\left(\|f\|_{L^q(B_{2}\cap \Omega)}+\widetilde{E}(v;B_{2})\right) \leq c\left(\|f\|_{L^q(B_{2}\cap \Omega)}+\widetilde{E}(u;B_{2})\right)
\end{aligned}
\end{equation*}
for some constant $c=c(n,s,q)$.
% Using this, we deduce
% \begin{align*}
%      \|u\|^\gamma_{W^{\alpha_1,\gamma}(B_{r_1})}\leq c\widetilde{E}(u;B_{2})^\gamma+c\left(\int_{B_{2}\cap \Omega}|f|^{q}\,dx\right)^{\gamma/q}.
% \end{align*}
% After iterating \eqref{iter.czb} $i_0$-times, we deduce 
% \begin{align*}
%     \sup_{h\in B_{\frac{r_{i_0}-r_{i_0-1}}{2^{11}}}\setminus \{0\}}\int_{B_{(r_{i_0}+r_{i_0-1})/2}}\frac{|\delta_hu|^\gamma}{|h|^{\overline{\alpha}_{i_0}\gamma}}\,dx \leq c \left[ \|u\|^{\gamma}_{L^\gamma(B_{2})}
%     + \widetilde{E}(u;B_{2})^\gamma+\left(\int_{B_{2}\cap \Omega}|f|^{q}\,dx\right)^{\gamma/q} \right],
% \end{align*}
% where $c=c(n,s,p,\gamma,q,\sigma)$. 
% Again, by \cite[Lemma 2.3]{DieKimLeeNow24} and \eqref{ineq7.cz}, 
Connecting this estimate to \eqref{iter2.czb}, we obtain 
\begin{align*}
     \|u\|^\gamma_{W^{\sigma,\gamma}(B_{1})} \leq c\widetilde{E}(u;B_{2})^\gamma+c\left(\int_{B_{2}\cap \Omega}|f|^{q}\,dx\right)^{\gamma/q},
\end{align*}
which completes the proof. 
\end{proof}

\subsection{Sharp gradient estimates} In this subsection, we prove sharp boundary gradient estimates for fractional $p$-Laplace equations.

When $s>(p-1)/p$, Lipschitz estimates for the $s$-fractional $p$-Laplace equation are established in the very recent paper \cite{BisTop24}. We slightly modify those results in a form suitable for our analysis.
\begin{lemma}%\label{lem.lip}
Let $u\in W^{s,p}(B_{2R}(z_0))\cap L^{p-1}_{sp}(\bbR^n)$ be a weak solution to 
\begin{equation}\label{eq.lip}
(-\Delta_p)^su=f\quad\text{in }B_{2R}(z_0),
\end{equation}
where $s>(p-1)/p$ and $f \in L^{\infty}(B_{2R}(z_0))$. 
Then we have 
    \begin{align}\label{ineq0.lip}
        \|\nabla u\|_{L^\infty(B_R(z_0))}\leq \frac{c}{R}\left(\widetilde{E}(u;B_{2R}(z_0))+\|R^{sp}f\|^{1/(p-1)}_{L^\infty(B_{2R}(z_0))}\right),
    \end{align}
for some $c=c(n,s,p)$.
\end{lemma}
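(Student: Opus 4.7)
The plan is to reduce to the unit-scale case and then combine the interior Lipschitz estimate of \cite{BisTop24} with a supremum bound of the type given in Lemma~\ref{lem.bdd} (interior version). Since the statement is purely interior and everything is scaling invariant under the map $u_{R,z_0}(x) = u(Rx+z_0)/R^s$ together with $f_{R,z_0}(x) = R^s f(Rx+z_0)$ (see Lemma~\ref{lem.scale}), I would first assume without loss of generality that $R=1$ and $z_0=0$, in which case \eqref{ineq0.lip} reduces to
\begin{equation*}
    \|\nabla u\|_{L^\infty(B_1)} \leq c\,\bigl(\widetilde{E}(u;B_2) + \|f\|_{L^\infty(B_2)}^{1/(p-1)}\bigr).
\end{equation*}

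Next, the main input is the interior Lipschitz estimate of \cite{BisTop24}, which for a weak solution to \eqref{eq.lip} with $s>(p-1)/p$ provides, on concentric balls $B_{5/4}\subset B_{3/2}\subset B_2$, an estimate of the form
\begin{equation*}
    \|\nabla u\|_{L^\infty(B_{5/4})} \leq c\,\bigl(\|u\|_{L^\infty(B_{3/2})} + \mathrm{Tail}(u;B_{3/2}) + \|f\|_{L^\infty(B_{3/2})}^{1/(p-1)}\bigr).
\end{equation*}
It remains to replace the $L^\infty$ quantity on the right by the weaker $L^{p-1}$ average that appears in $\widetilde{E}$. For this, I would apply the interior analogue of the boundedness result in Lemma~\ref{lem.bdd} (which in the interior setting is in fact easier and well established, e.g.~in \cite{KorKuuPal16, Coz17}), giving
\begin{equation*}
    \|u\|_{L^\infty(B_{3/2})} \leq c\,\bigl(\widetilde{E}(u;B_{7/4}) + \|f\|_{L^\infty(B_{7/4})}^{1/(p-1)}\bigr),
\end{equation*}
and then bound the tail on $B_{3/2}$ by the tail on $B_2$ plus a local $L^{p-1}$-contribution in a standard way. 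Inserting these estimates into the Lipschitz bound yields the desired inequality at $R=1$, and undoing the scaling restores the general form \eqref{ineq0.lip}.

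The genuinely non-trivial ingredient is the Lipschitz estimate of \cite{BisTop24}, which is used as a black box; everything else is routine scaling and standard $L^\infty$-type bounds already available in the paper and the literature. The only point that needs a small amount of care is that \cite{BisTop24} may state its estimate with a slightly different right-hand side (for instance with $L^p$-norms or with a different radius configuration); to address this, I would intermediate through a ball $B_\rho$ with $1<\rho<2$, apply their estimate on that scale, and then iterate/absorb as in a standard covering argument, so that the final dependence on the data is exactly $\widetilde{E}(u;B_2)$ plus the $f$-term.
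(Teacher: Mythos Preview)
Your approach is essentially the same as the paper's: apply the interior Lipschitz estimate of \cite{BisTop24} and replace the $L^\infty$-norm of $u$ by $\widetilde{E}(u;B_{2R}(z_0))$ via the boundedness result (Lemma~\ref{lem.bdd}). The scaling you use is correct and gives back \eqref{ineq0.lip}.

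One small point you do not address, and which the paper does: the Lipschitz estimate in \cite{BisTop24} is stated for \emph{viscosity} solutions, not weak solutions. The paper therefore invokes \cite{KorKuuLin19} to pass from weak to viscosity solutions before applying \cite[Theorem~2.1]{BisTop24}. Your caveat about ``a slightly different right-hand side'' anticipates a mismatch, but not this one; it is worth making the weak-to-viscosity step explicit.
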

\begin{proof}
    By Lemma \ref{lem.bdd}, we have 
    \begin{align}\label{ineq1.lip}
        \|u\|_{L^\infty(B_{3R/2}(z_0))}\leq c\left(\widetilde{E}(u;B_{2R}(z_0))+\|R^{sp}f\|^{1/(p-1)}_{L^\infty(B_{2R}(z_0))}\right)
    \end{align}
    for some constant $c=c(n,s,p)$. Note from \cite{KorKuuLin19} that a weak solution $u$ to \eqref{eq.lip} is also a viscosity solution to \eqref{eq.lip}. Moreover, $u_R(x)\coloneqq u(R x+z_0)/R$ is a viscosity solution to 
    \begin{equation*}
        (-\Delta_p)^s u_R=f_R\quad\text{in }B_2,
    \end{equation*}
    where $f_R(x)\coloneqq R^{sp-(p-1)}f(Rx+z_0)$.
    Thus, using \cite[Theorem 2.1]{BisTop24} and scaling back, we get
    \begin{align}\label{ineq2.lip}
        \|\nabla u\|_{L^\infty(B_R(z_0))}\leq \frac{c}{R}\left(\|u\|_{L^\infty(B_{3R/2}(z_0))}+\mathrm{Tail}(u;B_{3R/2}(z_0))+\|R^{sp}f\|^{1/(p-1)}_{L^\infty(B_{2R}(z_0))}\right),
    \end{align} 
    where $c=c(n,s,p)$. Combining \eqref{ineq1.lip} and \eqref{ineq2.lip} yields \eqref{ineq0.lip}.
\end{proof}

Using this estimate, we now prove Theorem \ref{thm.lip}.
\begin{proof}[Proof of Theorem \ref{thm.lip}.]
    The proof is very similar to that of Theorem  \ref{thm.hig} . Let us fix $\gamma<1/(1-s)$ and $h\in B_{1/8}\setminus \{0\}$. We investigate the following terms:
    \begin{align*}
    \int_{B_{1/8}}\frac{|u(x+h)-u(x)|^{\gamma}}{|h|^{\gamma}}\,dx = \sum_{i=1}^{5}\int_{\mathcal{A}_i}\frac{|u(x+h)-u(x)|^\gamma}{|h|^\gamma}\,dx \eqqcolon \sum_{i=1}^5I_i,
\end{align*}
    where the sets $\mathcal{A}_i$ are defined in the proof of Theorem \ref{thm.hig}.
    First, Lemma \ref{lem.basic1} with $\sigma=1$ implies that if $\delta=\delta(n,s,p,\gamma)>0$ is sufficiently small, then
    \begin{align}\label{ineqi1.gra}
        I_1\leq \int_{ \mathcal{A}_1}\frac{|u(x)|^\gamma}{d^\gamma(x)}\,dx+ \int_{ \mathcal{A}_1}\frac{|u(x+h)|^\gamma}{d^{\gamma}(x+h)}\,dx
        &\leq c\left(\widetilde{E}(u;B_1)+\|f\|^{1/(p-1)}_{L^\infty(B_1\cap \Omega)}\right)
    \end{align}
    for some constant $c=c(n,s,p,\gamma)$, as $1<s+1/\gamma$.
    We now estimate the term $I_2$ as
    \begin{align*}
        I_2&\leq c\int_{\mathcal{A}_2}\left(\|\nabla u\|^\gamma_{L^\infty(B_{d(x)/4}(x))}+\|\nabla u\|^\gamma_{L^\infty(B_{d(x+h)/4}(x+h))}\right)\,dz
    \end{align*}
    for some constant $c=c(n,s,p,\gamma)$. By \eqref{ineq0.lip}, we have 
    \begin{align}\label{ineq2.reg}
        \|\nabla u\|_{L^\infty(B_{d(x)/4}(x))}\leq \frac{c}{d(x)}\left(\widetilde{E}(u;B_{d(x)/2}(x))+\|d(x)^{{sp}}f\|^{1/(p-1)}_{L^\infty(B_{d(x)/2}(x))}\right)
    \end{align}
    for some constant $c=c(n,s,p)$. We now choose 
    \begin{equation*}%\label{choi.reg}
        \gamma_0\coloneqq(\gamma+1/(1-s))/2\quad\text{and}\quad s_0\coloneq 1-1/\gamma_0<s
    \end{equation*}
    to see that $(1-s_0)\gamma<1$. As in the estimate of $J$ given in the proof of Theorem \ref{thm.hig}, we have 
    \begin{align*}
       \frac{1}{d^{s_0}(x)}\widetilde{E}(u;B_{d(x)/2}(x))\leq c\left(\widetilde{E}(u;B_1)+\|f\|^{1/(p-1)}_{L^\infty(\Omega\cap B_1)}\right),
    \end{align*}
    where $c=c(n,s,p)$. Plugging this into \eqref{ineq2.reg} yields
    \begin{align*}
        \|\nabla u\|_{L^\infty(B_{d(x)/4}(x))}\leq \frac{c}{d^{1-s_0}(x)}\left(\widetilde{E}(u;B_1)+\|f\|^{1/(p-1)}_{L^\infty(\Omega\cap B_1)}\right)
    \end{align*}
    where $c=c(n,s,p)$. Similarly, we have
    \begin{align*}
        \|\nabla u\|_{L^\infty(B_{d(x+h)/4}(x+h))}\leq \frac{c}{d^{1-s_0}(x+h)}\left(\widetilde{E}(u;B_1)+\|f\|^{1/(p-1)}_{L^\infty(\Omega\cap B_1)}\right).
    \end{align*}
    Thus, assuming $d(x) \le d(x+h)$ without loss of generality, we get
    \begin{align*}
        I_2\leq \int_{\mathcal{A}_2}\frac{c}{d^{(1-s_0)\gamma}(x)}\left(\widetilde{E}(u;B_1)+\|f\|^{1/(p-1)}_{L^\infty(\Omega\cap B_1)}\right)^\gamma\,dx
    \end{align*}
    for some $c=c(n,s,p,\gamma)$. 
    Using \eqref{ineq1.basic1} together with the fact that $(1-s_0)\gamma<1$, we further estimate
    \begin{align*}
        I_2\leq \int_{\Omega\cap B_{1/4}}\frac{c}{d^{(1-s_0)\gamma}(x)}\left(\widetilde{E}(u;B_1)+\|f\|_{L^\infty(\Omega\cap B_1)}\right)^\gamma\,dx\leq c\left(\widetilde{E}(u;B_1)+\|f\|^{1/(p-1)}_{L^\infty(\Omega\cap B_1)}\right)^\gamma
    \end{align*}
    for some constant $c=c(n,s,p,\gamma)$, whenever $\delta=\delta(n,s,p,\gamma)>0$ is sufficiently small.
   By considering \eqref{ineq2.dsregmove}, \eqref{ineqi5.dsregmove} and \eqref{ineqi1.gra}, we deduce 
   \begin{align*}
       I_3+I_4+I_5\leq c\left(\widetilde{E}(u;B_1)+\|f\|^{1/(p-1)}_{L^\infty(\Omega\cap B_1)}\right)^\gamma
   \end{align*}
   for some constant $c=c(n,s,p,\gamma)$, whenever $\delta=\delta(n,s,p,\gamma)>0$ is sufficiently small.
    Combining the estimates found for the $I_i$, we arrive at
    \begin{align*}
   \sup_{h\in B_{1/8}\setminus\{0\}}\int_{B_{1/8}}\frac{|u(x+h)-u(x)|^{\gamma}}{|h|^{\gamma}}\,dx \leq    c\left(\widetilde{E}(u;B_1)+\|f\|^{1/(p-1)}_{L^\infty(\Omega\cap B_1)}\right)^\gamma.
    \end{align*}
    Then the desired estimate follows via a standard difference quotient characterization of Sobolev spaces \cite[Lemma 8.2]{Giu03}, whenever $\delta=\delta(n,s,p,\gamma)>0$ is sufficiently small. The proof is complete. 
\end{proof}

\printbibliography

\end{document}